\theoremstyle{plain}
\newtheorem{lemma}{Lemma}[section]
\newtheorem{proposition}[lemma]{Proposition}
\newtheorem{theorem}[lemma]{Theorem}
\newtheorem{assumption}[lemma]{Assumption}
\theoremstyle{definition}
\newtheorem{definition}[lemma]{Definition}
\newtheorem{remark}[lemma]{Remark}
\newcommand{\R}{\ensuremath{\mathbb R}} 
\newlist{todolist}{itemize}{2}
\setlist[todolist]{label=$\square$}
\numberwithin{equation}{section}
\begin{document}

\title{Peng's Maximum Principle for Stochastic Delay Differential Equations of Mean-Field Type}
\newcommand\shorttitle{Peng's Maximum Principle for SDDEs of Mean-Field Type}

%\date{\today}
\date{November 30, 2025}

\author{Giuseppina Guatteri\footnote{Dipartimento di Mathematica, Politecnico di Milano, p.zza Leonardo da Vinci 32, 20133 Milano, Italy; Email: giuseppina.guatteri@gmail.com}}
\author{Federica Masiero\footnote{
Dipartimento di Matematica e Applicazioni, Universit\`a di Milano Bicocca, 
via Cozzi 55, 20125 Milano, Italy;
Email: federica.masiero@unimib.it}}
\author{Lukas Wessels\footnote{School of Mathematics, Georgia Institute of Technology, 686 Cherry Street, Atlanta, GA 30332, USA; Email: wessels@gatech.edu}}
\newcommand\authors{}

\affil{}

\maketitle

\unmarkedfntext{\textit{Mathematics Subject Classification (2020) ---} Primary 49N80; secondary 49K45, 93C23, 93E20.}

%28A33: Spaces of measures, convergence of measures
%35D40: Viscosity solutions to PDEs
%35F21: Hamilton--Jacobi equations
%35K57: Reaction-diffusion equations
%35R15: PDEs on infinite-dimensional (e.g., function) spaces (= PDEs in infinitely many variables
%49K27: Optimality conditions for problems in abstract spaces
%49K45: Optimality conditions for problems involving randomness
%49L12:	Hamilton--Jacobi equations in optimal control and differential games
%49L20: Dynamic programming in optimal control and differential games
%49L25:	Viscosity solutions to Hamilton--Jacobi equations in optimal control and differential games
%49N35:	Optimal feedback synthesis
%49N80: Mean-field games and control
%60H15: SPDEs (aspects of stochastic analysis)
%65K10: Numerical optimization and variational techniques
%93C23: Control/observation systems governed by functional-differential equations
%93E20: Optimal stochastic control

\unmarkedfntext{\textit{Keywords and phrases ---} Mean-field control, stochastic maximum principle, stochastic delay equations, McKean--Vlasov equations}

%\unmarkedfntext{\textit{Email}: \textbullet$\,$ TBD $\,$\textbullet$\,$ TBD $\,$\textbullet$\,$ TBD}

\begin{abstract}
    We extend Peng's maximum principle to the case of stochastic delay differential equations of mean-field type. More precisely, the coefficients of our control problem depend on the state, on the past trajectory and on its expected value. Moreover, the control enters the noise coefficient and the control domain may be non-convex. Our approach is based on a lifting of the state equation to an infinite dimensional Hilbert space that removes the explicit delay in the state equation. The main ingredient in the proof of the maximum principle is a precise asymptotic for the expectation of the first order variational process, which allows us to neglect the corresponding second order terms in the expansion of the cost functional.
\end{abstract}

%[R. Buckdahn, B. Djehiche, J. Li, \textit{Appl. Math. Optim.} 64, (2011), pp. 197--216]

%\tableofcontents

\section{Introduction}

%\subsection{Setting}

In this paper, we study an optimal control problem governed by stochastic delay differential equations (SDDEs) of mean-field type: the coefficients of the equation as well as the cost functional depend on the state, on the past trajectory of the state, and on its expected value. More precisely, we fix a finite time horizon $T>0$, a finite delay $d>0$, an initial condition $x_0\in \mathbb{R}^n$, $\mathbf{x}_0 : [-d,0]\to \mathbb{R}^n$, and consider the following optimization problem: Minimize the cost functional
\begin{equation}\label{cost_functional}
	J(u(\cdot)) = \mathbb{E} \left [ \int_0^T \tilde{l} \left ( t,x(t),\mathbf{x}(t),\mathbb{E}[x(t)], \mathbb{E}[\mathbf{x}(t)],u(t) \right ) \mathrm{d}t + \tilde{m} \left (x(T),\mathbf{x}(T),\mathbb{E}[x(T)],\mathbb{E}[\mathbf{x}(T)] \right ) \right ]
\end{equation}
subject to the controlled stochastic delay differential equation of mean-field type\footnote{Precise assumptions on the coefficients will be given below in Section \ref{section_preliminaries}. We would like to emphasize, however, that the delay terms will be given by integration against square-integrable densities.}
\begin{equation}\label{state_equation}
\begin{cases}
	\mathrm{d}x(t) = \tilde{b}(t,x(t),\mathbf{x}(t),\mathbb{E}[x(t)],\mathbb{E}[\mathbf{x}(t)],u(t)) \mathrm{d}t + \tilde{\sigma}(t,x(t),\mathbf{x}(t),\mathbb{E}[x(t)],\mathbb{E}[\mathbf{x}(t)],u(t)) \mathrm{d}W(t) \\
    x(0) = x_0, \quad x(\theta)= \mathbf{x}_0(\theta),\quad \theta \in [-d,0].
\end{cases}
\end{equation}
Here, $\mathbf{x}(t):[-d,0]\to \mathbb{R}^n$ is given by $\mathbf{x}(t)(\theta) = x(t+\theta)$, $\theta \in [-d,0]$, and $(W(t))_{t\in [0,T]}$ is a Brownian motion with values in $\mathbb{R}^w$, $w\in \mathbb{N}$, on some probability space $(\Omega,\mathcal{F},(\mathcal{F}_t)_{t\in[0,T]}, \mathbb{P})$, where $(\mathcal{F}_t)$ is its natural filtration augmented by all $\mathbb{P}$-null sets. Moreover, the controls $u(\cdot)$ are progressively measurable, square-integrable processes taking values in some (not necessarily convex) set $U_{\text{ad}} \subset U$, for some real, separable Banach space $(U,\|\cdot\|_U)$.

%\subsection{Peng's Maximum Principle}

Our objective in the present paper is the derivation of a necessary optimality condition for the aforementioned optimization problem. In \cite{pontryagin_1962}, Pontryagin and his coauthors derived a necessary optimality condition for controlled ordinary differential equations (ODEs) by employing a Taylor expansion of the cost functional up to first order, and then rewriting the resulting linear terms using duality. The Pontryagin maximum principle is then stated in terms of an adjoint state which is characterized as the solutions of a backward ODE. This optimality condition can be generalized to a large class of controlled stochastic differential equations (SDEs) by characterizing the adjoint state as the solution of a backward stochastic differential equation (BSDE). However, the presence of the control in the noise-coefficient $\tilde{\sigma}$ combined with the fact that the admissible controls take values in a non-convex control domain introduces another layer of difficulty. The main novelty stems from the fact that in this case one has to use Taylor expansions for the cost functional up to second order. This raises the question of how the additional quadratic terms in this expansion can be handled. In the case of controlled SDEs, in his seminal work \cite{peng_1990}, Peng solved this problem by linearizing the quadratic terms using the tensor product, which can be identified with a matrix, and then deriving a matrix-valued second order adjoint equation. Subsequently, Peng's work has been generalized in numerous directions, in particular to controlled mean-field equations, controlled SDDEs, and controlled infinite-dimensional stochastic systems.

%\subsection{Literature: SMP for Mean-Field Equations}

The case of controlled mean-field equations was, to the best of our knowledge, first discussed in \cite{andersson_djehiche_2011}. In this paper, the authors prove a stochastic maximum principle in terms of a first order adjoint state. This result was extended in \cite{carmona_delarue_2015} to include more general dependence on the law of the solution of the state equation, see also \cite[Chapter 6]{carmona_delarue_2018}. In \cite{buckdahn_djehiche_li_2011,buckdahn_li_ma_2016}, the authors prove versions of Peng's maximum principle for controlled mean-field SDEs. The main difficulty in the proof of Peng's maximum principle for controlled mean-field SDEs is that the second order Taylor expansion leads to quadratic terms involving the square of the expectation. For these terms, it is not clear how they can be linearized via the tensor product. To resolve this issue, in \cite{buckdahn_djehiche_li_2011,buckdahn_li_ma_2016} the authors made the observation that these quadratic terms are actually of lower order and therefore can be neglected.

%\subsection{Literature: SMP for SDDEs}

For control problems governed by SDDEs, there is an extensive body of literature on necessary optimality conditions that involve only the first order adjoint state, see e.g. \cite{chen_wu_2010,guatteri_masiero_2021,guo_xiong_zheng_2024,huang_shi_2012,oksendal_sulem_zhang_2011,peng_yang_2009,yu_2012,zhang_2021}. In the aforementioned papers the stochastic maximum principle for problems with delay is formulated by means of anticipated BSDEs which were introduced in the seminal paper \cite{peng_yang_2009}. We emphasize that in those works the analysis is confined to convex control spaces or to control-independent noise, so that only a first-order adjoint equation is needed to formulate the maximum principle. In the recent papers \cite{guatteri_masiero_2023,guatteri_masiero_2024,meng_shi_2021,meng_shi_wang_zhang_2025}, the authors consider the case of non convex control domains and control-dependent noise. In \cite{guatteri_masiero_2024}, the authors exploit the specific structure of the model under consideration. In \cite{guatteri_masiero_2023}, the delay equation is reformulated in an infinite dimensional space, which is roughly speaking the product space of the present and of the past trajectory, see Section \ref{sec-inf-ref} for more details on this lifting. This reformulation restores Markovianity of the state equation. However, as discussed below, the infinite dimensionality of the state space introduces new challenges. In \cite{meng_shi_2021}, the authors introduce an additional second order adjoint equation, the solution of which is assumed to be zero. In \cite{meng_shi_wang_zhang_2025}, the stochastic maximum principle is formulated by means of Volterra equations.

%\subsection{Literature: Peng's Maximium Principle for SPDEs and other Infinite Dimensional Systems}

The generalization of Peng's maximum principle to infinite-dimensional stochastic systems has been investigated by many authors, see \cite{du_meng_2013,fuhrman_hu_tessitore_2012,fuhrman_hu_tessitore_2013,frankowska_zhang_2020,lu_zhang_2014,lu_zhang_2015,lu_zhang_2018,stannat_wessels_2021,tang_li_1994}. One of the main difficulties in this case is the characterization of the second order adjoint state. While in finite dimensions, the second order adjoint state is matrix-valued, in infinite dimensions it becomes operator-valued. The natural state space for this process is the space of bounded linear operators. However, the fact that this is a non-reflexive Banach space introduces various technical difficulties for the duality theory as well as the stochastic integration theory. One way to circumvent these difficulties is to work in the space of Hilbert--Schmidt operators instead, which on the other hand limits the scope of applications. For a more detailed discussion of the literature regarding Peng's maximum principle for controlled stochastic partial differential equations, see also \cite{wessels_2022}.

%\subsection{Literature: Mean-field in Infinite Dimensions}

In recent years, the study of optimal control problems governed by infinite dimensional systems of mean-field type has gained significant attention, both using the dynamic programming approach (see \cite{cosso_gozzi_kharroubi_pham_rosestolato_2023,defeo_gozzi_swiech_wessels_2025,djehiche_gozzi_zanco_zanella_2022,gozzi_masiero_rosestolato_2024}) as well as via the stochastic maximum principle (see \cite{agram_oksendal_2019,buckdahn_li_li_xing_2025,dumitrescu_oksendal_sulem_2018,shi_wang_yong_2013,spille_stannat_2025,tang_meng_wang_2019}). These works study various infinite dimensional models including systems with delays or path-dependence and systems governed by stochastic partial differential equations. However, all the papers studying the stochastic maximum principle work in the framework of convex control domains, thus only requiring a first order adjoint state.

%\subsection{Our Results and Techniques}

In the present paper, we study the control problem \eqref{cost_functional}-\eqref{state_equation} governed by an SDDE with control in the noise coefficient and non-convex control domains. We use the lifting approach for SDDEs as in \cite{guatteri_masiero_2023} to reformulate the problem in an infinite dimensional space. This turns the problem into a mean-field control problem in infinite dimensions. Then, as in the classical approach, we use Taylor expansions to expand the cost functional. We employ second order expansions in the state (and the delay), however, we only use first order expansions in the expectation (and the expectation of the delay). In this approach the crucial point is the estimate in Proposition \ref{proposition_asymptotic_expectation_new} on the mean of the first order variation process, given by equation \eqref{first_variational_equation}. The method of Buckdahn et al., see \cite[Lemma 3.2]{buckdahn_djehiche_li_2011}, cannot be directly extended to the infinite-dimensional setting, as the two auxiliary equations they introduce are not easily generalizable. To overcome this limitation, we construct a family of operator-valued BSDEs \eqref{bsde_dual} and apply a duality argument to estimate the expected value of the first variation of the state. This method not only allows us to generalize the asymptotic to our infinite dimensional setting, but also to sharpen it. This refined asymptotic enables us to relax the assumptions on the coefficients of the control problem: we do not require a second order derivative in the argument that takes the expectation as an input.

Once this improved asymptotic is established, the proof follows the same route as the proof in the classical case: we use Taylor expansions of the cost functional, and use the adjoint states to handle the resulting terms. Along the way, we derive the well-posedness of general McKean--Vlasov BSDEs in infinite dimensional spaces, see Theorem \ref{BSDE_existence_of_solution}, which is needed for the first order adjoint equation \eqref{first_order_adjoint} as well as the family of operator-valued BSDEs \eqref{bsde_dual}. Moreover, we derive an equation for the tensor product of the first order variational equation, see Theorem \ref{theorem_tensor_product}, which seems to be new in this generality. Both of these results are of independent interest.

Finally, let us mention that our method exploits the special structure of the lifted infinite-dimensional control problem: The drift and the noise coefficients take values in the finite dimensional component of the state space, which leads to first order Fr\'echet derivatives that take values in the space of Hilbert--Schmidt operators, see Lemma \ref{lemma_B_Sigma_Lipschitz}. Moreover, the special structure of the running and terminal cost results in second order Fr\'echet derivatives which are Hilbert--Schmidt, see Lemma \ref{lemma_properties_L_M}. This allows us to circumvent some of the previously discussed issues that arise in the study of control problems governed by other infinite dimensional stochastic systems. The extension of our methods to more general cases is left for future work.

%\subsection{Outline of the Paper}

The paper is organized as follows. Section \ref{section_preliminaries} states the main assumptions, introduces the infinite-dimensional reformulation of the delay problem, and establishes basic well-posedness results. Section \ref{section_variational_equations} is devoted to the variational equations and to the asymptotic expansion of the cost functional.
In Section \ref{section_adjoint_equations} we define the adjoint equations and develop the associated duality relations, while Section \ref{section_stochastic_maximum_principle} presents the stochastic maximum principle. Finally, Appendices \ref{appendix_McKean_Vlasov_BSDE} and \ref{appendix_Operator_Valued_BSDE} contain auxiliary results on McKean--Vlasov backward equations and on operator-valued stochastic equations. These results are formulated in a more general framework and may also be applied beyond the specific setting of the delay reformulation considered here.

\section{Preliminaries}\label{section_preliminaries}

\subsection{Notation}

Throughout the paper, we are going to use the following notation.

\begin{itemize}
    \item Let $H = \mathbb{R}^n \oplus L^2([-d,0],\mathbb{R}^n)$. We denote the inner product and norm on $H$ by $\langle \cdot,\cdot \rangle$ and $\|\cdot\|$, respectively. We denote the components of a generic element $X\in H$ by $X=(x,\mathbf{x})$, i.e., $x\in \mathbb{R}^n$ and $\mathbf{x}\in L^2([-d,0],\mathbb{R}^n)$.
    \item Let $W^{1,2}([-d,0],\mathbb{R}^n)$ be the Sobolev space of order $1$, i.e., the subset of functions in $L^2([-d,0];\mathbb{R}^n)$ that have one weak derivative in $L^2([-d,0];\mathbb{R}^n)$.
    %\item For two Banach spaces $E_1$ and $E_2$, let $L(E_1,E_2)$ denote the space of all bounded linear operators from $E_1$ to $E_2$ endowed with the norm $\| T \|_{L(E_1,E_2)} = \sup_{e\in E_1, \|e\|\leq 1} \|Te\|_{E_2}$, $T\in L(E_1,E_2)$. For $E_1=E_2$, we write $L(E_1)=L(E_1,E_1)$.
    %\item For two Hilbert spaces $H_1$ and $H_2$, let $L_2(H_1,H_2)$ denote the space of all Hilbert--Schmidt operators from $H_1$ to $H_2$. For $H_1=H_2$, we write $L_2(H_1) = L_2(H_1,H_1)$. The inner product on $L_2(H_1,H_2)$ is given as follows: Let $(e_j)_{j\in J}$ be an orthonormal basis of $H_1$, and let $S,T \in L_2(H_1,H_2)$. Then $\langle S,T\rangle_{L_2(H_1,H_2)} := \sum_{j\in J} \langle S e_j, T e_j \rangle_{H_2}$. Let $\| \cdot \|_{L_2(H_1,H_2)}$ denote the corresponding norm. Note that if $H_1$ is finite dimensional, we have $\| \cdot \|_{L_2(H_1,H_2)} \leq C \| \cdot \|_{L(H_1,H_2)}$ for some constant $C$ depending only on the dimension of $H_1$.
    \item For two Hilbert spaces $H_1$ and $H_2$, let $L(H_1,H_2)$ denote the space of all bounded linear operators from $H_1$ to $H_2$ endowed with the norm $\| T \|_{L(H_1,H_2)} = \sup_{h\in H_1, \|h\|\leq 1} \|Th\|_{H_2}$, $T\in L(H_1,H_2)$. Moreover, let $L_2(H_1,H_2)$ denote the space of all Hilbert--Schmidt operators from $H_1$ to $H_2$. The inner product on $L_2(H_1,H_2)$ is given as follows: Let $(e_j)_{j\in J}$ be an orthonormal basis of $H_1$, and let $S,T \in L_2(H_1,H_2)$. Then $\langle S,T\rangle_{L_2(H_1,H_2)} := \sum_{j\in J} \langle S e_j, T e_j \rangle_{H_2}$. Let $\| \cdot \|_{L_2(H_1,H_2)}$ denote the induced norm. Note that if $H_1$ is finite dimensional, we have $\| \cdot \|_{L_2(H_1,H_2)} \leq C \| \cdot \|_{L(H_1,H_2)}$ for some constant $C$ depending only on the dimension of $H_1$. For $H_1=H_2$, we write $L(H_1)= L(H_1,H_1)$ and $L_2(H_1) = L_2(H_1,H_1)$.
    \item For some Banach space $(E,\|\cdot\|_E)$, let $L^2_{\mathcal{F}}(\Omega\times [0,T];E)$ denote the set of all $(\mathcal{F}_t)$-progressively measurable processes with values in $E$ such that $\| X\|_{L_{\mathcal{F}}^2(\Omega\times [0,T];E)} := ( \mathbb{E} [ \int_0^T \|X(t)\|_E^2 \mathrm{d}t ] )^{1/2} < \infty$. Let $L^2_{\mathcal{F}}(\Omega; C([0,T];E))$ denote the set of all $(\mathcal{F}_t)$-progressively measurable processes with values in $E$ such that $\| X\|_{L_{\mathcal{F}}^2(\Omega; C( [0,T];E))} := ( \mathbb{E} [ \sup_{t \in [0,T]} \|X(t)\|_E^2] )^{1/2} < \infty$.
    \item For Hilbert spaces $H_1$ and $H_2$ and a function $\phi : H_1\to H_2$, we denote by $\phi_X : H_1 \to L(H_1,H_2)$ and $\phi_{XX}: H_1\to L(H_1\times H_1,H_2)$ the first and second Fr\'echet derivatives, respectively. For $X,Y,Z\in H_1$, we denote by $\phi_{XX}(X)YZ = \phi_{XX}(X)(Y,Z)$, and if $Y=Z$, we denote $\phi_{XX}(X)Y^2 = \phi_{XX}(X)YY$.
\end{itemize}

\subsection{Assumptions}

In order to reformulate our problem on a space of square-integrable functions, we assume that the delay terms are given in terms of an integration against square integrable densities. More precisely, let us assume that $\tilde{b}: [0,T]\times \mathbb{R}^n \times L^2([-d,0];\mathbb{R}^n) \times \mathbb{R}^n \times L^2([-d,0];\mathbb{R}^n) \times U_{\text{ad}} \to \mathbb{R}^n$ and $\tilde{\sigma}: [0,T]\times \mathbb{R}^n \times L^2([-d,0];\mathbb{R}^n) \times \mathbb{R}^n \times L^2([-d,0];\mathbb{R}^n) \times U_{\text{ad}} \to L(\mathbb{R}^w,\mathbb{R}^n)$ are of the form
\begin{equation}
\begin{split}
    \tilde{b}(t,x,\mathbf{x},y,\mathbf{y},u) &= b \left ( t,x,\int_{-d}^0 \mathbf{x}(\theta) f_{bx}(\theta) \mathrm{d}\theta,y, \int_{-d}^0 \mathbf{y}(\theta) f_{by}(\theta) \mathrm{d}\theta ,u\right ) \\
    \tilde{\sigma}(t,x,\mathbf{x},y,\mathbf{y},u) &= \sigma \left ( t,x,\int_{-d}^0 \mathbf{x}(\theta) f_{\sigma x}(\theta) \mathrm{d}\theta,y, \int_{-d}^0 \mathbf{y}(\theta) f_{\sigma y}(\theta) \mathrm{d}\theta ,u\right )
\end{split}
\end{equation}
for some $b: [0,T]\times \mathbb{R}^n \times \mathbb{R}^n \times \mathbb{R}^n\times \mathbb{R}^n \times U_{\text{ad}} \to \mathbb{R}^n$ and $\sigma : [0,T]\times \mathbb{R}^n \times \mathbb{R}^n \times \mathbb{R}^n\times \mathbb{R}^n \times U_{\text{ad}} \to L(\mathbb{R}^w,\mathbb{R}^n)$, and $f_{bx}, f_{by},f_{\sigma x},f_{\sigma y}\in L^2([-d,0];\mathbb{R})$. Analogously, we assume that $\tilde{l}: [0,T]\times \mathbb{R}^n \times L^2([-d,0];\mathbb{R}^n) \times \mathbb{R}^n \times L^2([-d,0];\mathbb{R}^n) \times U_{\text{ad}} \to \mathbb{R}$ and $\tilde{m}: \mathbb{R}^n \times L^2([-d,0];\mathbb{R}^n) \times \mathbb{R}^n \times L^2([-d,0];\mathbb{R}^n) \to \mathbb{R}$ are of the form
\begin{equation}
\begin{split}
    \tilde{l}(t,x,\mathbf{x},y,\mathbf{y},u) &= l \left ( t,x,\int_{-d}^0 \mathbf{x}(\theta) f_{lx}(\theta) \mathrm{d}\theta,y, \int_{-d}^0 \mathbf{y}(\theta) f_{ly}(\theta) \mathrm{d}\theta ,u\right )\\
    \tilde{m}(x,\mathbf{x},y,\mathbf{y}) &= m \left ( x,\int_{-d}^0 \mathbf{x}(\theta) f_{mx}(\theta) \mathrm{d}\theta,y, \int_{-d}^0 \mathbf{y}(\theta) f_{my}(\theta) \mathrm{d}\theta \right )
\end{split}
\end{equation}
for some $l: [0,T]\times \mathbb{R}^n \times \mathbb{R}^n\times \mathbb{R}^n \times \mathbb{R}^n \times U_{\text{ad}} \to \mathbb{R}$ and $m : \mathbb{R}^n \times \mathbb{R}^n\times \mathbb{R}^n \times \mathbb{R}^n \to \mathbb{R}$, and $f_{lx},f_{ly},f_{mx},f_{my}\in L^2([-d,0];\mathbb{R})$.

Now, let us impose our assumptions on the coefficients of the control problem.

\begin{assumption}\label{assumption_b_sigma}
    The coefficients $b:[0,T]\times \mathbb{R}^n \times \mathbb{R}^n \times \mathbb{R}^n \times \mathbb{R}^n \times U_{\text{ad}} \to \mathbb{R}^n$ and $\sigma: [0,T]\times \mathbb{R}^n\times \mathbb{R}^n\times \mathbb{R}^n \times \mathbb{R}^n \times U_{\text{ad}} \to L(\mathbb{R}^w,\mathbb{R}^n)$ admit derivatives $b_x$, $\sigma_x$, $b_{\tilde{x}}$, $\sigma_{\tilde{x}}$, $b_y$, $\sigma_y$, $b_{\tilde{y}}$, $\sigma_{\tilde{y}}$, $b_{xx}$, $\sigma_{xx}$, $b_{\tilde{x}\tilde{x}}$, $\sigma_{\tilde{x}\tilde{x}}$, $b_{x\tilde{x}}$ and $\sigma_{x\tilde{x}}$. The functions $b$ and $\sigma$ as well as all these derivatives are continuous in the variables $(t,x,\tilde{x},y,\tilde{y},u)$. The functions $b_x$, $\sigma_x$, $b_{\tilde{x}}$, $\sigma_{\tilde{x}}$, $b_y$, $\sigma_y$, $b_{\tilde{y}}$, $\sigma_{\tilde{y}}$, $b_{xx}$, $\sigma_{xx}$, $b_{\tilde{x}\tilde{x}}$, $\sigma_{\tilde{x}\tilde{x}}$, $b_{x\tilde{x}}$ and $\sigma_{x\tilde{x}}$ are bounded, and $b$, $\sigma$ are bounded by $C(1+|x|+|\tilde{x}|+|y|+|\tilde{y}|)$.
\end{assumption}

\begin{assumption}\label{assumption_l_m}
    The coefficients $l:[0,T]\times \mathbb{R}^n\times \mathbb{R}^n\times \mathbb{R}^n \times \mathbb{R}^n \times U_{\text{ad}} \to \mathbb{R}$ and $m:\mathbb{R}^n \times\mathbb{R}^n \times\mathbb{R}^n \times \mathbb{R}^n \to \mathbb{R}$ admit derivatives $l_x$, $m_x$, $l_{\tilde{x}}$, $m_{\tilde{x}}$, $l_y$, $m_y$, $l_{\tilde{y}}$, $m_{\tilde{y}}$, $l_{xx}$, $m_{xx}$, $l_{\tilde{x}\tilde{x}}$, $m_{\tilde{x}\tilde{x}}$, $l_{x\tilde{x}}$ and $m_{x\tilde{x}}$. The functions $l$ and $m$ as well as all these derivatives are continuous in the variables $(t,x,\tilde{x},y,\tilde{y},u)$ and $(x,\tilde{x},y,\tilde{y})$, respectively. The functions $l_{xx}$, $m_{xx}$, $l_{\tilde{x}\tilde{x}}$, $m_{\tilde{x}\tilde{x}}$, $l_{x\tilde{x}}$ and $m_{x\tilde{x}}$ are bounded, $l_x$, $m_x$, $l_y$, $m_y$ are bounded by $C(1+|x|+|y|+ \|u\|_U)$, and $|l(t,0,0,0,0,u)| \leq C(1+\|u\|_U^2)$.
\end{assumption}

\subsection{Infinite Dimensional Reformulation}\label{sec-inf-ref}

Now, we are in a position to introduce the infinite dimensional formulation. Let $H=\mathbb{R}^n \oplus L^2([-d,0];\mathbb{R}^n)$, where the inner product between $X=(x,\mathbf{x}) \in H$ and $Y=(y,\mathbf{y}) \in H$ is given by $\langle X,Y \rangle = x \cdot y + \langle \mathbf{x},\mathbf{y} \rangle_{L^2([-d,0];\mathbb{R}^n)}$ and the norm is denoted by $\| X \| = \sqrt{\langle X,X \rangle}$. Then
\begin{equation}\label{explicit_representation_semigroup}
    e^{tA}:H\to H,\quad e^{tA} \begin{pmatrix} x\\ \mathbf{x} \end{pmatrix} = \begin{pmatrix} x \\ x \mathbf{1}_{[-t,0]}(\cdot) + \mathbf{x}(\cdot+t) \mathbf{1}_{[-d,-t]}(\cdot) \end{pmatrix}
\end{equation}
defines a $C_0$-semigroup in $H$ with infinitesimal generator $A : \mathcal{D}(H) \subset H \to H$ given by
\begin{equation}\label{operator_A}
    \mathcal{D}(A) = \left \{ X=\begin{pmatrix} x \\ \mathbf{x} \end{pmatrix} \in H,\;\mathbf{x} \in W^{1,2}([-d,0],\mathbb{R}^n),\;\mathbf{x}(0) = x \right \},\quad AX = A\begin{pmatrix} x\\\mathbf{x} \end{pmatrix} = \begin{pmatrix} 0\\\mathrm{d}\mathbf{x}/\mathrm{d}\theta \end{pmatrix}.
\end{equation}
The state equation \eqref{state_equation} can be lifted to an equation for $X(t) = \begin{pmatrix} x(t)\\ \mathbf{x}(t) \end{pmatrix} = \begin{pmatrix} x(t)\\ (x(t+\theta))_{\theta \in [-d,0]} \end{pmatrix} \in H$ of the form
\begin{equation}\label{lifted_state_equation}
\begin{cases}
    \mathrm{d}X(t) = [ AX(t) + B(t,X(t), \mathbb{E}[X(t)],u(t)) ] \mathrm{d}t + \Sigma(t,X(t),\mathbb{E}[X(t)],u(t)) \mathrm{d}W(t), \quad t\in [0,T]\\
    X(0) = X_0 = \begin{pmatrix} x_0 \\ (\mathbf{x}_0(\theta))_{\theta\in [-d,0]} \end{pmatrix} \in H,
\end{cases}
\end{equation}
where $B:[0,T]\times H \times H \times U_{\text{ad}} \to H$ and $\Sigma:[0,T]\times H \times H \times U_{\text{ad}} \to L(\mathbb{R}^w,H)$ are given by
\begin{equation}\label{B_and_Sigma}
\begin{split}
    B(t,X,Y,u) &:= G b\left (t,x,\int_{-d}^0 \mathbf{x}(\theta) f_{bx}(\theta) \mathrm{d}\theta,y, \int_{-d}^0 \mathbf{y}(\theta) f_{by}(\theta) \mathrm{d}\theta , u \right )\\
    \Sigma(t,X,Y,u) &:= G \sigma \left (t,x,\int_{-d}^0 \mathbf{x}(\theta) f_{\sigma x}(\theta) \mathrm{d}\theta,y, \int_{-d}^0 \mathbf{y}(\theta) f_{\sigma y}(\theta) \mathrm{d}\theta , u \right )
\end{split}
\end{equation}
for $t\in [0,T]$, $X= ( x,\mathbf{x} )\in H$, $Y= ( y,\mathbf{y} ) \in H$, $u\in U_{\text{ad}}$, and
\begin{equation}
    G : \mathbb{R}^n \to H,\quad G = \begin{pmatrix} I\\ 0 \end{pmatrix}.
    %,\quad X(t) = \begin{pmatrix} x(t)\\ (x(t+\theta))_{\theta\in [-d,0]} \end{pmatrix}, \quad \text{and}\quad X = \begin{pmatrix} x_0\\x_1 \end{pmatrix}.
\end{equation}

\begin{remark}
    We will show in Theorem \ref{well_posedness_lifted_state_equation} below that, under Assumption \ref{assumption_b_sigma}, the lifted state equation \eqref{lifted_state_equation} admits a unique mild solution.
\end{remark}

We have the following equivalence between the finite dimensional SDDE and the lifted equation, see \cite[Theorem 3.4]{federico_tankov_2015}.
\begin{proposition}
    Let Assumption \ref{assumption_b_sigma} be satisfied. For an initial condition $(x_0,\mathbf{x}_0) \in H$ and an admissible control $u(\cdot) \in L^2_{\mathcal{F}}([0,T]\times \Omega; U_{\text{ad}})$, let $x(\cdot)$ be the solution of the SDDE \eqref{state_equation} and let $X(\cdot)$ be the mild solution of the lifted equation \eqref{lifted_state_equation}, i.e., for every $t\in [0,T]$,
    \begin{equation}
        X(t)= e^{ t A}X_0 + \int_0^t e^{ (t-s) A} B(s,X(s),\mathbb{E}[X(s)],u(s)) \, \mathrm{d}s +  \int_0^t e^{ (t-s) A} \Sigma(s,X(s),\mathbb{E}[X(s)],u(s)) \mathrm{d}W(s)
    \end{equation}
    $\mathbb{P}$-almost surely. Then for all $t\in [0,T]$, $X(t) = (x(t),\mathbf{x}(t))$, where we recall that $\mathbf{x}(t)(\theta)=x(t+\theta)$, $\theta \in [-d,0]$.
\end{proposition}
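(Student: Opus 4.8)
The plan is to verify the equivalence by exhibiting the explicit candidate and invoking uniqueness of the mild solution. Starting from the solution $x(\cdot)$ of the SDDE \eqref{state_equation}, I would set $\hat{X}(t) := (x(t),\mathbf{x}(t))$ with $\mathbf{x}(t)(\theta) = x(t+\theta)$, and show that $\hat{X}(\cdot)$ satisfies the mild formulation of \eqref{lifted_state_equation}. Since Theorem \ref{well_posedness_lifted_state_equation} guarantees that the lifted equation admits a unique mild solution, identifying $\hat{X}$ as a mild solution forces $X = \hat{X}$, which is precisely the claim. The computation is made tractable by the explicit form of the semigroup \eqref{explicit_representation_semigroup} together with the fact that $B$ and $\Sigma$ take values in the range of $G$, so that the coefficients only feed into the $\mathbb{R}^n$-component.

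Concretely, I would first record the action $e^{tA}G\xi = (\xi,\xi\mathbf{1}_{[-t,0]}(\cdot))$ for $\xi\in\mathbb{R}^n$, which follows immediately from \eqref{explicit_representation_semigroup} and \eqref{B_and_Sigma}. Writing $B(s,\hat{X}(s),\mathbb{E}[\hat{X}(s)],u(s)) = G\beta(s)$ and $\Sigma(s,\hat{X}(s),\mathbb{E}[\hat{X}(s)],u(s)) = G\varsigma(s)$, with $\beta,\varsigma$ the corresponding values of $b,\sigma$, the first ($\mathbb{R}^n$-)component of the mild formula reduces to $x_0 + \int_0^t \beta(s)\,\mathrm{d}s + \int_0^t \varsigma(s)\,\mathrm{d}W(s)$, which is exactly the integral form of \eqref{state_equation}; here one uses that the arguments of $b$ and $\sigma$ coincide, since $\int_{-d}^0 \mathbf{x}(s)(\theta)f(\theta)\,\mathrm{d}\theta$ and its expected-value counterparts match by the definition of $\hat{X}$ and by Fubini's theorem. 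For the $L^2$-component evaluated at $\theta\in[-d,0]$, the indicator $\mathbf{1}_{[-(t-s),0]}(\theta)$ restricts the time integration to $\{s\le t+\theta\}$: when $t+\theta\ge 0$ the free term $e^{tA}X_0$ contributes $x_0$ and the two integrals run over $[0,t+\theta]$, reconstructing $x(t+\theta)$; when $t+\theta<0$ the integrals vanish and the free term contributes $\mathbf{x}_0(t+\theta)$, which again equals $x(t+\theta)$ in the history regime. In both cases the $L^2$-component at $\theta$ equals $x(t+\theta) = \mathbf{x}(t)(\theta)$.

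The main technical obstacle I anticipate is the rigorous treatment of the stochastic convolution $\int_0^t e^{(t-s)A}\Sigma(s,\dots)\,\mathrm{d}W(s)$, which is an $H$-valued stochastic integral whose $L^2([-d,0];\mathbb{R}^n)$-component must be identified pointwise in $\theta$ with $\int_0^{(t+\theta)^+}\varsigma(s)\,\mathrm{d}W(s)$. Justifying this interchange of the pointwise-in-$\theta$ evaluation with the stochastic integral requires a stochastic Fubini argument, which is where the square-integrability of the densities $f_{\sigma x},f_{\sigma y}$ and Assumption \ref{assumption_b_sigma} (ensuring the integrands lie in the appropriate $L^2_{\mathcal{F}}$ spaces) enter. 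The apparent circularity---that the coefficients in the mild formula depend on $X(s)$, which is exactly what we are identifying---is resolved precisely by the uniqueness step: one only needs to check that $\hat{X}$, built from the independently constructed SDDE solution, is a mild solution, after which uniqueness from Theorem \ref{well_posedness_lifted_state_equation} does the rest. This is the content of \cite[Theorem 3.4]{federico_tankov_2015}, and the argument above is the natural adaptation to the mean-field coefficients considered here.
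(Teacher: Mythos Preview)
The paper does not supply its own proof of this proposition; it simply refers to \cite[Theorem 3.4]{federico_tankov_2015} and states the result. Your sketch is correct and is precisely the standard argument behind that cited theorem (build $\hat{X}$ from the SDDE solution, use the explicit semigroup \eqref{explicit_representation_semigroup} and the fact that $B,\Sigma$ take values in $\mathrm{ran}\,G$ to verify the mild formulation componentwise, then appeal to uniqueness from Theorem \ref{well_posedness_lifted_state_equation}), so there is nothing to compare---you have essentially reconstructed the proof the paper outsources.
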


The cost functional can be rewritten as
\begin{equation}\label{lifted-cost}
    J(u(\cdot)) := \mathbb{E} \left [ \int_0^T L(t,X(t),\mathbb{E}[X(t)],u(t)) \mathrm{d}t + M(X(T),\mathbb{E}[X(T)]) \right ]
\end{equation}
where $L:[0,T]\times H \times H \times U_{\text{ad}} \to \mathbb{R}$ and $M : H \times H \to \mathbb{R}$ are given by
\begin{equation}\label{L_and_M}
\begin{split}
    L(t,X,Y,u) &:= l\left (t,x,\int_{-d}^0 \mathbf{x}(\theta) f_{lx}(\theta) \mathrm{d}\theta, y, \int_{-d}^0 \mathbf{y}(\theta) f_{ly}(\theta) \mathrm{d}\theta , u \right )\\
    M(X,Y) &:= m \left (x,\int_{-d}^0 \mathbf{x}(\theta) f_{mx}(\theta) \mathrm{d}\theta, y , \int_{-d}^0 \mathbf{y}(\theta) f_{my}(\theta) \mathrm{d}\theta \right )
\end{split}
\end{equation}
for $t\in [0,T]$, $X= ( x,\mathbf{x} ) \in H$, $Y= ( y,\mathbf{y} ) \in H$, $u\in U_{\text{ad}}$.

Thus, the control problem \eqref{cost_functional}-\eqref{state_equation} is equivalent with the following control problem: Minimize
\begin{equation}
    J(u(\cdot)) := \mathbb{E} \left [ \int_0^T L(t,X(t),\mathbb{E}[X(t)],u(t)) \mathrm{d}t + M(X(T),\mathbb{E}[X(T)]) \right ]
\end{equation}
subject to
\begin{equation}
\begin{cases}
    \mathrm{d}X(t) = [ AX(t) + B(t,X(t), \mathbb{E}[X(t)],u(t)) ] \mathrm{d}t + \Sigma(t,X(t),\mathbb{E}[X(t)],u(t)) \mathrm{d}W(t), \quad t\in [0,T]\\
    X(0) = X_0 = \begin{pmatrix} x_0 \\ (\mathbf{x}_0(\theta))_{\theta\in [-d,0]} \end{pmatrix} \in H,
\end{cases}
\end{equation}
over the set of admissible controls $L^2_{\mathcal{F}}([0,T]\times\Omega; U_{\text{ad}})$.

\subsection{Preliminary Results for the Infinite Dimensional Problem}

In this section, we establish the regularity properties of the coefficients of our reformulated problem in infinite dimensions and derive the well-posedness of the lifted state equation \eqref{lifted_state_equation}. Let us start with the unbounded operator $A$.

\begin{lemma}\label{lemma_A_pseudo_contraction}
    The operator $A: \mathcal{D}(A) \subset H \to H$ introduced in \eqref{operator_A} is the infinitesimal generator of a $C_0$-semigroup of pseudo-contractions.
\end{lemma}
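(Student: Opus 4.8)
The plan is to invoke the Lumer--Phillips theorem, applied not to $A$ directly but to the shifted operator $A - \tfrac12 I$, and thereby exhibit $A$ as the generator of a $C_0$-semigroup satisfying $\|e^{tA}\| \le e^{t/2}$, which is exactly a semigroup of pseudo-contractions. Recall that by Lumer--Phillips it suffices to verify three things: that $\mathcal{D}(A)$ is dense in $H$, that $A - \tfrac12 I$ is dissipative, and that $\lambda I - A$ is surjective for some (hence all) $\lambda > \tfrac12$.

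First I would establish density of $\mathcal{D}(A)$. Given an arbitrary $(x,\mathbf{x}) \in H$, one approximates $\mathbf{x}$ in $L^2([-d,0];\mathbb{R}^n)$ by smooth functions and, on a short interval near $\theta = 0$, adjusts the approximant so that it takes the value $x$ at $\theta = 0$; since modifying a $W^{1,2}$ function on a set of small measure changes its $L^2$ norm only slightly, this produces elements of $\mathcal{D}(A)$ converging to $(x,\mathbf{x})$ in $H$.

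The crucial computation is the dissipativity estimate. For $X = (x,\mathbf{x}) \in \mathcal{D}(A)$ we have $AX = (0, \mathrm{d}\mathbf{x}/\mathrm{d}\theta)$, so integration by parts gives
\begin{equation*}
    \langle AX, X\rangle = \int_{-d}^0 \frac{\mathrm{d}\mathbf{x}}{\mathrm{d}\theta}(\theta)\cdot \mathbf{x}(\theta)\,\mathrm{d}\theta = \frac12\bigl(|\mathbf{x}(0)|^2 - |\mathbf{x}(-d)|^2\bigr) = \frac12\bigl(|x|^2 - |\mathbf{x}(-d)|^2\bigr) \le \frac12 \|X\|^2,
\end{equation*}
where the boundary condition $\mathbf{x}(0) = x$ encoded in $\mathcal{D}(A)$ is precisely what converts the integration-by-parts boundary term into $|x|^2$. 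This shows $\langle (A - \tfrac12 I) X, X\rangle \le 0$, i.e.\ that $A - \tfrac12 I$ is dissipative.

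Finally, for the range condition I would solve the resolvent equation $(\lambda I - A)(x,\mathbf{x}) = (y,\mathbf{y})$ explicitly for given $(y,\mathbf{y}) \in H$ and $\lambda > 0$. The first component forces $x = y/\lambda$, while the second component is the linear first-order ODE $\lambda \mathbf{x} - \mathbf{x}' = \mathbf{y}$ on $[-d,0]$ subject to the constraint $\mathbf{x}(0) = x = y/\lambda$; its unique solution
\begin{equation*}
    \mathbf{x}(\theta) = e^{\lambda\theta}\frac{y}{\lambda} - \int_0^\theta e^{\lambda(\theta - s)}\mathbf{y}(s)\,\mathrm{d}s
\end{equation*}
lies in $W^{1,2}([-d,0];\mathbb{R}^n)$ because $\mathbf{y} \in L^2$ and the kernel is bounded, and satisfies $\mathbf{x}' = \lambda\mathbf{x} - \mathbf{y} \in L^2$, so $(x,\mathbf{x}) \in \mathcal{D}(A)$. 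Hence $\lambda I - A$ is onto. Lumer--Phillips then yields that $A - \tfrac12 I$ generates a contraction semigroup, so $A$ generates a $C_0$-semigroup of pseudo-contractions; by uniqueness of the generator this semigroup coincides with the one displayed in \eqref{explicit_representation_semigroup}. I expect the only mildly delicate points to be the density argument and the check that the constructed resolvent lies in the domain (in particular that the boundary condition $\mathbf{x}(0) = x$ holds), both of which are routine; the real content is the boundary-term cancellation in the dissipativity estimate.
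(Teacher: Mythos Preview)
Your proof is correct but takes a genuinely different route from the paper's. The paper exploits the explicit formula \eqref{explicit_representation_semigroup} for the semigroup and directly computes, for $t\le d$,
\[
\|e^{tA}X\|^2 = |x|^2 + \int_{-t}^0 |x|^2\,\mathrm{d}\theta + \int_{-d}^{-t}|\mathbf{x}(t+\theta)|^2\,\mathrm{d}\theta \le (1+t)\|X\|^2 \le e^t\|X\|^2,
\]
yielding $\|e^{tA}\|\le e^{t/2}$ in one line. Your approach instead works abstractly from the generator via Lumer--Phillips, proving dissipativity of $A-\tfrac12 I$ through the integration-by-parts identity and surjectivity by solving the resolvent ODE. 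The paper's argument is shorter because the explicit semigroup formula is already on the table; your argument has the advantage of being self-contained (it does not presuppose that \eqref{explicit_representation_semigroup} actually defines a $C_0$-semigroup with generator $A$, a fact the paper asserts without proof) and would transfer to settings where no closed-form semigroup is available. Both yield the same growth bound $e^{t/2}$.
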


\begin{proof}
    Due to the explicit representation of the semigroup \eqref{explicit_representation_semigroup}, we have for $t\leq d$, and $X= ( x , \mathbf{x}) \in H$,
    \begin{equation}
    \begin{split}
        \left \| e^{tA} X \right \|^2 &= | x |^2 + \int_{-d}^0 | x \mathbf{1}_{[-t,0]}(\theta) + \mathbf{x}(\theta+t) \mathbf{1}_{[-d,-t]}(\theta) |^2 \mathrm{d}\theta\\
        &= |x|^2 + \int_{-t}^0 |x|^2 \mathrm{d}\theta + \int_{-d}^{-t} | \mathbf{x}(t+\theta)|^2 \mathrm{d}\theta \leq (1+t) \left ( |x|^2 + \| \mathbf{x} \|_{L^2([-d,0];\mathbb{R}^n)}^2 \right ) \leq e^t \| X \|^2.
    \end{split}
    \end{equation}
    The case $t>d$ can be treated similarly, which shows that $A$ generates a $C_0$-semigroup of pseudo-contractions.
\end{proof}

Next, we deduce properties of the coefficients $B$ and $\Sigma$ of the lifted equation \eqref{lifted_state_equation} that follow from our Assumption \ref{assumption_b_sigma} on the coefficients $\tilde{b}$ and $\tilde{\sigma}$ of the delay state equation \eqref{state_equation}.

\begin{lemma}\label{lemma_B_Sigma_Lipschitz}
    Let Assumption \ref{assumption_b_sigma} be satisfied. Let $B:[0,T]\times H \times H \times U_{\text{ad}} \to H$ and $\Sigma:[0,T]\times H\times H \times U_{\text{ad}} \to L(\mathbb{R}^w,H)$ be the coefficients introduced in \eqref{B_and_Sigma}. Then, there is a constant $C>0$ such that
    \begin{align}
        \label{lip_B} \| B(t,X,Y,u) - B(t,X',Y',u) \| &\leq C (\|X-X'\| + \|Y-Y'\| )\\
        \label{lip_Sigma}\| \Sigma(t,X,Y,u) - \Sigma(t,X',Y',u) \|_{L(\mathbb{R}^w,H)} &\leq C (\|X-X'\| + \|Y-Y'\|)\\
        \label{bound_in_control} \|B(t,0,0,u)\| + \| \Sigma(t,0,0,u) \|_{L(\mathbb{R}^w,H)} &\leq C
    \end{align}
    for all $t\in [0,T]$, $X,X',Y,Y' \in H$ and $u\in U_{\text{ad}}$. Moreover, $B$ and $\Sigma$ admit Fr\'echet derivatives $B_X$, $\Sigma_X$, $B_Y$, $\Sigma_Y$, $B_{XX}$ and $\Sigma_{XX}$. The functions $B$ and $\Sigma$ as well as these derivatives are continuous in the variables $(t,X,Y,u)$, and $B_X$, $\Sigma_X$, $B_Y$, $\Sigma_Y$, $B_{XX}$, $\Sigma_{XX}$ are bounded. Finally, $B_X(t,X,Y,u)$ is a Hilbert--Schmidt operator for all $t\in [0,T]$, $X,Y\in H$ and $u\in U_{\text{ad}}$, and $\| B_X(t,X,Y,u) \|_{L_2(H)}$ is uniformly bounded.
\end{lemma}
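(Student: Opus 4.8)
The plan is to exploit the factorization of $B$ and $\Sigma$ through a finite-dimensional space that is already built into their definition \eqref{B_and_Sigma}. I would first introduce the bounded linear operators $\Pi : H \to \mathbb{R}^n$, $\Pi(x,\mathbf{x}) = x$, and, for $f\in L^2([-d,0];\mathbb{R})$, $I_f : H \to \mathbb{R}^n$, $I_f(x,\mathbf{x}) = \int_{-d}^0 \mathbf{x}(\theta) f(\theta)\,\mathrm{d}\theta$. The Cauchy--Schwarz inequality gives $|I_f X| \le \|f\|_{L^2}\,\|X\|$, so $\Pi$ and $I_f$ are bounded with $\|\Pi\|_{L(H,\mathbb{R}^n)} = 1$ and $\|I_f\|_{L(H,\mathbb{R}^n)} \le \|f\|_{L^2}$. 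With this notation one has $B(t,X,Y,u) = G\,b(t,\Pi X, I_{f_{bx}} X, \Pi Y, I_{f_{by}} Y, u)$ and similarly for $\Sigma$, and since $\|Gv\| = |v|$ the operator $G$ is an isometry, whence $\|B(t,X,Y,u)\| = |b(\cdots)|$ and $\|\Sigma(t,X,Y,u)\|_{L(\mathbb{R}^w,H)} = \|\sigma(\cdots)\|_{L(\mathbb{R}^w,\mathbb{R}^n)}$.

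Given this reduction, the estimates \eqref{lip_B}--\eqref{lip_Sigma} and \eqref{bound_in_control} are immediate. Boundedness of the first derivatives $b_x,b_{\tilde x},b_y,b_{\tilde y}$ (respectively $\sigma_x,\sigma_{\tilde x},\sigma_y,\sigma_{\tilde y}$) in Assumption \ref{assumption_b_sigma} makes $b$ (respectively $\sigma$) globally Lipschitz in the four finite-dimensional arguments, uniformly in $(t,u)$; composing with $\Pi$ and $I_f$ and invoking the operator bounds above converts this into Lipschitz continuity in $\|X-X'\|+\|Y-Y'\|$. The bound \eqref{bound_in_control} then follows by evaluating the growth condition $|b|+\|\sigma\|_{L(\mathbb{R}^w,\mathbb{R}^n)} \le C(1+|x|+|\tilde x|+|y|+|\tilde y|)$ at the origin.

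For the differentiability claims I would apply the chain rule: since $b,\sigma$ are $C^2$ in $(x,\tilde x,y,\tilde y)$ and the extraction maps are bounded and linear, $B$ and $\Sigma$ are Fréchet differentiable in $X$ and in $Y$, and twice in $X$, with, for instance,
\[
B_X(t,X,Y,u)h = G\bigl(b_x\,\Pi h + b_{\tilde x}\,I_{f_{bx}} h\bigr),
\]
the matrices $b_x,b_{\tilde x}$ being evaluated at $(t,\Pi X, I_{f_{bx}}X,\Pi Y, I_{f_{by}}Y,u)$, and with analogous expressions for $B_Y,\Sigma_X,\Sigma_Y$ and for $B_{XX},\Sigma_{XX}$ (the latter using $b_{xx},b_{x\tilde x},b_{\tilde x\tilde x}$, respectively $\sigma_{xx},\sigma_{x\tilde x},\sigma_{\tilde x\tilde x}$). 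Continuity in $(t,X,Y,u)$ is inherited from continuity of the derivatives of $b,\sigma$, and boundedness follows from the boundedness of those derivatives together with $\|\Pi\|\le 1$ and $\|I_f\|\le \|f\|_{L^2}$; e.g.\ $\|B_X\|_{L(H)} \le \|b_x\|_\infty + \|b_{\tilde x}\|_\infty \|f_{bx}\|_{L^2}$.

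The only substantive point, and the structural feature highlighted in the introduction, is the Hilbert--Schmidt property of $B_X$. The key observation is that the range of $B_X(t,X,Y,u)$ is contained in the $n$-dimensional subspace $G(\mathbb{R}^n)\subset H$, and every bounded operator from a Hilbert space into a finite-dimensional space is automatically Hilbert--Schmidt. Concretely, fixing an orthonormal basis of $H$ and representing each scalar component of $\Pi$ and $I_{f_{bx}}$ by its Riesz vector — of norm $1$ for each component of $\Pi$ and of norm $\|f_{bx}\|_{L^2}$ for each component of $I_{f_{bx}}$ — one finds $\|\Pi\|_{L_2(H,\mathbb{R}^n)}^2 = n$ and $\|I_{f_{bx}}\|_{L_2(H,\mathbb{R}^n)}^2 = n\|f_{bx}\|_{L^2}^2$. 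Since $G$ is an isometry (so left composition with $G$ preserves the Hilbert--Schmidt norm) and $b_x,b_{\tilde x}$ are bounded matrices, the formula for $B_X$ gives
\[
\|B_X(t,X,Y,u)\|_{L_2(H)} \le \sqrt{n}\,\bigl(\|b_x\|_\infty + \|b_{\tilde x}\|_\infty\,\|f_{bx}\|_{L^2}\bigr),
\]
which is finite and independent of $(t,X,Y,u)$. I expect this finite-rank observation to be the crux of the proof; once the factorization through $(\mathbb{R}^n)^4$ and the isometry property of $G$ are in place, all remaining estimates are routine.
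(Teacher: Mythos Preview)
Your proposal is correct and follows essentially the same route as the paper: both exploit the factorization $B = G\circ b\circ(\Pi,I_{f_{bx}},\Pi,I_{f_{by}})$ (and similarly for $\Sigma$) through a finite-dimensional space, obtain the Lipschitz and growth bounds from Assumption~\ref{assumption_b_sigma} together with the boundedness of $\Pi$ and $I_f$, compute the Fr\'echet derivatives by the chain rule, and conclude the Hilbert--Schmidt property of $B_X$ from the fact that its range lies in the $n$-dimensional subspace $G(\mathbb{R}^n)$. The paper writes out explicit componentwise formulas for $\Sigma_X$, $\Sigma_Y$, $\Sigma_{XX}$ rather than introducing the operators $\Pi$, $I_f$ abstractly, but this is a presentational difference only.
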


\begin{proof}
The Lipschitz properties \eqref{lip_B} and $\eqref{lip_Sigma}$, and the uniform bound in the control \eqref{bound_in_control} are straightforward consequences of the definitions of $B$ and $\Sigma$ in \eqref{B_and_Sigma} as well as Assumption \ref{assumption_b_sigma}. Regarding the differentiability, let us only treat the diffusion coefficient $\Sigma$; the drift $B$ can be treated in a similar way. In this proof, to shorten the notation, let us drop the arguments of the derivatives of $\sigma$, i.e., we write
\begin{equation}
    \sigma^{ij}_x := \sigma^{ij}_x \left ( t,x,\int_{-d}^0 \mathbf{x}(\theta) f_{\sigma x}(\theta) \mathrm{d}\theta,y, \int_{-d}^0 \mathbf{y}(\theta) f_{\sigma y}(\theta) \mathrm{d}\theta ,u\right ),
\end{equation}
for $i=1,\dots,n$ and $j=1,\dots, w$, and similarly for the other derivatives $\sigma_{\tilde{x}}$, $\sigma_y$, $\sigma_{\tilde{y}}$, $\sigma_{xx}$, $\sigma_{x\tilde{x}}$, $\sigma_{\tilde{x}x}$, $\sigma_{\tilde{x}\tilde{x}}$. We have the following explicit formulae for the derivatives: For every $Z=(z,\mathbf{z}) \in H$ and $j=1,\dots, w$,
\begin{align}
    \Sigma^j_X(t,X,Y,u) Z & = G\left ( \sigma^{ij}_x \cdot z + \sigma^{ij}_{\tilde{x}} \cdot \int_{-d}^0  f_{\sigma x}(\theta)  \mathbf{z}(\theta) \mathrm{d} \theta \right )_{i=1,\dots,n } \label{first_derivative_sigma}\\
    \Sigma^j_Y(t,X,Y,u) Z & = G\left ( \sigma^{ij}_y \cdot z + \sigma^{ij}_{\tilde{y}} \cdot \int_{-d}^0 f_{\sigma y}(\theta) \mathbf{z}(\theta) \mathrm{d} \theta \right )_{i=1,\dots,n }. 
\end{align} 
Moreover, regarding the second order derivatives, we have for $Z_1 = (z_1,\mathbf{z}_1), Z_2= (z_2,\mathbf{z}_2) \in H$, and $j=1,\dots,w$,
\begin{equation}
\begin{split}
    \Sigma_{XX}^j(t,X,Y,u) Z_1 Z_2 & = G \Bigg ( \sigma^{ij}_{x x} z_1 \cdot z_2 + \sigma^{ij}_{x\tilde{x}} \int_{-d}^0 f_{\sigma x}(\theta) \mathbf{z}_1(\theta) \mathrm{d} \theta \cdot z_2 + \sigma^{j,i}_{\tilde{x}x} z_1 \cdot \int_{-d}^0 f_{\sigma x}(\theta) \mathbf{z}_2(\theta) \mathrm{d}\theta\\
    &\qquad\qquad\qquad + \sigma^{ij}_{\tilde{x}\tilde{x}} \int_{-d}^0 f_{\sigma x}(\theta_1) \mathbf{z}_1(\theta_1) \mathrm{d} \theta_1 \cdot \int_{-d}^0 f_{\sigma x}(\theta_2) \mathbf{z}_2(\theta_2) \mathrm{d} \theta_2 \Bigg )_{i=1,\dots,n }.
\end{split}
\end{equation}
In view of Assumption \ref{assumption_b_sigma} and since $f_{\sigma x},f_{\sigma y}\in L^2([-d,0];\mathbb{R})$, we have that $\Sigma^j_X (t,X,Y,u)$, $\Sigma^j_Y(t,X,Y,u) \in  L(H)$, and $\Sigma^j_{XX}(t,X,Y,u) \in  L(H\times H, H)$ and are bounded. A similar calculation for the derivatives of $B$ shows that $B_X(t,X,Y,u) : H \to H$ is an operator with finite rank, and thus Hilbert--Schmidt. The uniform bound on the Hilbert--Schmidt norm of $B_X$ follows from the explicit formula for $B_X$ corresponding to \eqref{first_derivative_sigma}, and the uniform bound on $b_x$ and $b_{\tilde{x}}$.
\end{proof}

Due to Lemmas \ref{lemma_A_pseudo_contraction} and \ref{lemma_B_Sigma_Lipschitz}, we can apply \cite[Proposition 2.8]{cosso_gozzi_kharroubi_pham_rosestolato_2023} to deduce the well posedness of the lifted state equation \eqref{lifted_state_equation}:
\begin{theorem}\label{well_posedness_lifted_state_equation}
    Let Assumption \ref{assumption_b_sigma} be satisfied. Then, for any $\mathcal{F}_0$-measurable $X_0\in L^2(\Omega;H)$, there exists a unique process $X(\cdot) \in L^2_{\mathcal{F}} (\Omega;C([0,T];H))$ that solves equation \eqref{lifted_state_equation} in a mild sense, that is, for every $t\in [0,T]$,
    \begin{equation}
        X(t)= e^{ t A}X_0 + \int_0^t e^{ (t-s) A} B(s,X(s),\mathbb{E}[X(s)],u(s)) \, \mathrm{d}s +  \int_0^t e^{ (t-s) A} \Sigma(s,X(s),\mathbb{E}[X(s)],u(s)) \mathrm{d}W(s)
    \end{equation}
    $\mathbb{P}$-almost surely. Moreover, there is a constant $C$ independent of $u$ such that
    \begin{equation}\label{estimate_state}
    \mathbb{E}\left [ \sup_{t \in [0,T]} \|X(t)\|^2 \right ] \leq C \left ( 1+ \mathbb{E} \left [ \|X_0\|^2 \right ] \right ).  
    \end{equation}
\end{theorem}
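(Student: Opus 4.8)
The plan is to realize the mild solution of \eqref{lifted_state_equation} as the fixed point of the associated integral map, and then to upgrade the resulting process to one with continuous paths satisfying the sup-estimate \eqref{estimate_state}. Given an admissible control $u(\cdot)$, define for $X(\cdot)\in L^2_{\mathcal{F}}(\Omega\times[0,T];H)$ the map
\begin{equation*}
(\Phi X)(t) := e^{tA}X_0 + \int_0^t e^{(t-s)A} B(s,X(s),\mathbb{E}[X(s)],u(s))\,\mathrm{d}s + \int_0^t e^{(t-s)A}\Sigma(s,X(s),\mathbb{E}[X(s)],u(s))\,\mathrm{d}W(s).
\end{equation*}
Since $W$ is $\mathbb{R}^w$-valued and $\Sigma=G\sigma$ has finite rank, the integrand of the stochastic integral is automatically Hilbert--Schmidt, so $\Phi$ is well defined. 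The only feature beyond the classical theory is the mean-field coupling, and the key observation is that by Jensen's inequality $\|\mathbb{E}[X(s)]-\mathbb{E}[X'(s)]\|\le (\mathbb{E}[\|X(s)-X'(s)\|^2])^{1/2}$, so the dependence on the law contributes a term of the same order as the dependence on the state.

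First I would establish existence and uniqueness in $\mathcal{H}:=L^2_{\mathcal{F}}(\Omega\times[0,T];H)$. Using the pseudo-contraction bound $\|e^{tA}\|_{L(H)}\le e^{t/2}$ from Lemma~\ref{lemma_A_pseudo_contraction}, the Lipschitz estimates \eqref{lip_B}--\eqref{lip_Sigma}, It\^{o}'s isometry for the stochastic integral, and the Jensen bound above, one obtains for $X,X'\in\mathcal{H}$ an estimate of the form $\mathbb{E}\|(\Phi X)(t)-(\Phi X')(t)\|^2 \le C\int_0^t \mathbb{E}\|X(s)-X'(s)\|^2\,\mathrm{d}s$. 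Equipping $\mathcal{H}$ with the equivalent weighted norm $\|X\|_\beta^2:=\int_0^T e^{-\beta t}\mathbb{E}\|X(t)\|^2\,\mathrm{d}t$ and choosing $\beta$ large turns $\Phi$ into a contraction, so the Banach fixed point theorem yields a unique $X(\cdot)\in\mathcal{H}$ with $\Phi X=X$. Combining \eqref{lip_B}--\eqref{bound_in_control} into a linear growth bound and applying Gronwall's inequality then gives $\sup_{t\in[0,T]}\mathbb{E}\|X(t)\|^2\le C(1+\mathbb{E}\|X_0\|^2)$, with $C$ independent of $u$ thanks to the uniform bound \eqref{bound_in_control}.

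It remains to show that this fixed point admits a continuous modification satisfying the stronger estimate \eqref{estimate_state}. Here I would invoke the factorization method: for $\alpha\in(0,1/2)$ write the stochastic convolution as
\begin{equation*}
\int_0^t e^{(t-s)A}\Sigma\,\mathrm{d}W(s) = \frac{\sin(\pi\alpha)}{\pi}\int_0^t (t-r)^{\alpha-1} e^{(t-r)A} Y_\alpha(r)\,\mathrm{d}r, \qquad Y_\alpha(r):=\int_0^r (r-s)^{-\alpha} e^{(r-s)A}\Sigma\,\mathrm{d}W(s),
\end{equation*}
and use the pseudo-contraction property together with It\^{o}'s isometry to show, after first establishing sufficient moment bounds on $X$, that $Y_\alpha\in L^p_{\mathcal{F}}(\Omega\times[0,T];H)$ for some $p>2$ with $\alpha p>1$. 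Convolution with the integrable kernel $(t-r)^{\alpha-1}$ then produces a version of the stochastic convolution that lies in $L^2_{\mathcal{F}}(\Omega;C([0,T];H))$, and the deterministic and drift terms are continuous by strong continuity of the semigroup and dominated convergence. Feeding these back into a maximal estimate and applying Gronwall's inequality upgrades the a priori bound to \eqref{estimate_state}.

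The hard part will be precisely this last step. Because the operator $e^{(t-s)A}$ appearing under the stochastic integral depends on the running time $t$, one cannot apply the Burkholder--Davis--Gundy inequality directly to the convolution in order to control $\mathbb{E}\sup_{t}\|\cdot\|^2$; the factorization identity above is the device that circumvents this by trading the singular kernel for an $L^p$ integrability requirement on $Y_\alpha$. By contrast, the mean-field coupling and the merely measurable, possibly non-convex control $u(\cdot)$ introduce no additional difficulty: the former is tamed by Jensen's inequality, while the latter enters only through the uniform bounds of Lemma~\ref{lemma_B_Sigma_Lipschitz}, which is exactly what makes the constant in \eqref{estimate_state} independent of $u$. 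All of this is subsumed by \cite[Proposition 2.8]{cosso_gozzi_kharroubi_pham_rosestolato_2023}, whose hypotheses are verified by Lemmas~\ref{lemma_A_pseudo_contraction} and~\ref{lemma_B_Sigma_Lipschitz}.
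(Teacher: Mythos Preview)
Your proposal is correct and ultimately coincides with the paper's proof: the paper simply observes that Lemmas~\ref{lemma_A_pseudo_contraction} and~\ref{lemma_B_Sigma_Lipschitz} verify the hypotheses of \cite[Proposition~2.8]{cosso_gozzi_kharroubi_pham_rosestolato_2023} and cites that result, which is exactly your concluding sentence. The fixed-point sketch you add beforehand is extra (and accurate) detail; just be aware that the factorization step as you describe it needs $L^p$ bounds with $p>2$, which are not directly available for $X_0\in L^2(\Omega;H)$ only, so one either approximates the initial datum or, more cleanly, uses a maximal inequality for stochastic convolutions under a pseudo-contraction semigroup (as the paper itself does later via \cite[Lemma~3.3]{gawarecki_mandrekar_2011})---but this is a minor technical point given that you defer to the cited proposition.
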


In the following lemma, we deduce properties of the running and terminal cost of the lifted cost functional \eqref{lifted-cost} that follow from our Assumption \ref{assumption_l_m} on the running and terminal cost in \eqref{cost_functional}.
\begin{lemma}\label{lemma_properties_L_M}
    Let Assumption \ref{assumption_l_m} be satisfied. Let $L:[0,T]\times H \times H \times U_{\text{ad}} \to \mathbb{R}$ and $M:H\times H \to \mathbb{R}$ be the coefficients introduced in \eqref{L_and_M}. Then, $L$ and $M$ admit Fr\'echet derivatives $L_X$, $M_X$, $L_Y$, $M_Y$, $L_{XX}$, $M_{XX}$ which are continuous in the variables $(t,X,Y,u)$ and $(X,Y)$, respectively. Moreover, $L_{XX}$ and $M_{XX}$ are bounded, $L_X$, $M_X$, $L_Y$, $M_Y$ are bounded by $C(1+\|X\|+ \|Y\|+\|u\|_U)$, and $|L(t,0,0,u)| \leq C(1+\|u\|_U^2)$. Finally, $L_{XX}(t,X,Y,u)$ and $M_{XX}(X,Y)$ are Hilbert--Schmidt operators for all $t\in [0,T]$, $X,Y \in H$ and $u\in U_{\text{ad}}$.
\end{lemma}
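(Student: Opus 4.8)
The plan is to exploit that $L$ and $M$ are the finite-dimensional functions $l$ and $m$ of Assumption~\ref{assumption_l_m} precomposed with bounded linear maps, so that all the asserted regularity follows from the chain rule. For $f\in L^2([-d,0];\mathbb R)$ introduce the bounded linear maps $\pi:H\to\mathbb R^n$, $\pi(x,\mathbf x)=x$, and $\Lambda_f:H\to\mathbb R^n$, $\Lambda_f(x,\mathbf x)=\int_{-d}^0\mathbf x(\theta)f(\theta)\,\mathrm d\theta$; the latter is bounded by Cauchy--Schwarz with $\|\Lambda_f\|\le\|f\|_{L^2}$. In this notation $L(t,X,Y,u)=l(t,\pi X,\Lambda_{f_{lx}}X,\pi Y,\Lambda_{f_{ly}}Y,u)$ and $M(X,Y)=m(\pi X,\Lambda_{f_{mx}}X,\pi Y,\Lambda_{f_{my}}Y)$, i.e.\ both $L$ and $M$ depend on $X$, and on $Y$, only through a finite-dimensional pair in $\mathbb R^n\times\mathbb R^n$.

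First I would write out the Fréchet derivatives by the chain rule, exactly as for $\Sigma$ in the proof of Lemma~\ref{lemma_B_Sigma_Lipschitz}. For $Z=(z,\mathbf z)\in H$ this gives $L_X(t,X,Y,u)Z=l_x\cdot z+l_{\tilde x}\cdot\Lambda_{f_{lx}}Z$, with the analogous expressions for $L_Y,M_X,M_Y$ obtained from $l_y,l_{\tilde y},m_x,m_{\tilde x},m_y,m_{\tilde y}$ and the corresponding densities. The second derivative $L_{XX}(t,X,Y,u)(Z_1,Z_2)$ is the sum of the four terms built from $l_{xx},l_{x\tilde x},l_{\tilde x x},l_{\tilde x\tilde x}$ paired with $z_1,z_2,\Lambda_{f_{lx}}Z_1,\Lambda_{f_{lx}}Z_2$, in perfect analogy with the formula for $\Sigma_{XX}^j$, and likewise for $M_{XX}$. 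Continuity of each of these derivatives in $(t,X,Y,u)$, respectively $(X,Y)$, is inherited from the assumed continuity of the derivatives of $l$ and $m$ composed with the continuous linear maps $\pi,\Lambda_f$.

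Next I would read off the quantitative estimates. Since $|\pi Z|\le\|Z\|$ and $|\Lambda_f Z|\le\|f\|_{L^2}\|Z\|$, the representation of $L_X$ yields $\|L_X(t,X,Y,u)\|_{H^*}\le|l_x|+\|f_{lx}\|_{L^2}\,|l_{\tilde x}|$; invoking the growth bounds of Assumption~\ref{assumption_l_m} on the first derivatives of $l$ (together with $|\pi X|\le\|X\|$, $|\pi Y|\le\|Y\|$) gives $\|L_X\|_{H^*},\|L_Y\|_{H^*}\le C(1+\|X\|+\|Y\|+\|u\|_U)$, and identically for $M$. Boundedness of $l_{xx},l_{x\tilde x},l_{\tilde x\tilde x}$ (resp.\ the second derivatives of $m$) immediately gives boundedness of $L_{XX},M_{XX}$. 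Finally, setting $X=Y=0$ annihilates every argument of $l$, so $|L(t,0,0,u)|=|l(t,0,0,0,0,u)|\le C(1+\|u\|_U^2)$.

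The structural heart of the statement---and the step I regard as the crux---is the Hilbert--Schmidt property. The point is that the bilinear form $L_{XX}(t,X,Y,u)$ depends on its arguments $Z_1,Z_2$ only through $R_{lx}Z_i:=(\pi Z_i,\Lambda_{f_{lx}}Z_i)\in\mathbb R^{2n}$; hence the operator on $H$ representing it equals $R_{lx}^{*}\,\mathcal H\,R_{lx}$, where $R_{lx}:H\to\mathbb R^{2n}$ is bounded linear, $\mathcal H$ is the $2n\times 2n$ matrix assembled from $l_{xx},l_{x\tilde x},l_{\tilde x\tilde x}$, and $R_{lx}^{*}:\mathbb R^{2n}\to H$ is the adjoint. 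Such an operator has rank at most $2n$, and finite-rank operators are automatically Hilbert--Schmidt; the same factorization through $R_{mx}$ handles $M_{XX}$. I expect no genuine difficulty beyond this bookkeeping; the only point demanding care is to confirm that Assumption~\ref{assumption_l_m} controls the $\tilde x$- and $\tilde y$-derivatives entering $L_X,L_Y,M_X,M_Y$ so that the first-order growth bounds indeed close up.
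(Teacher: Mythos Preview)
Your approach matches the paper's: explicit chain-rule formulas for the derivatives, growth bounds read off from Assumption~\ref{assumption_l_m}, and the finite-rank observation for the Hilbert--Schmidt property (the paper writes out the operator $L_{XX}:H\to H$ componentwise and notes it has finite rank, which is exactly your $R_{lx}^{*}\mathcal H\,R_{lx}$ factorization in different packaging). On your closing concern, the paper's proof simply cites Assumption~\ref{assumption_l_m} for the first-order growth bounds without isolating $l_{\tilde x},l_{\tilde y},m_{\tilde x},m_{\tilde y}$; as stated the assumption does not list explicit growth bounds for those derivatives, so your caution is warranted, but the paper treats this as implicit and proceeds exactly as you do.
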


\begin{proof} 
    As in the proof of Lemma \ref{lemma_B_Sigma_Lipschitz}, we have explicit formulae for the derivatives. Let us only consider the running cost $L$; the calculations for the terminal cost $M$ are similar. Throughout this proof, to shorten the notation, let us drop the arguments of the derivatives of $l$, i.e., we write
    \begin{equation}
        l_x := l_x \left ( t,x,\int_{-d}^0 \mathbf{x}(\theta) f_{\sigma x}(\theta) \mathrm{d}\theta,y, \int_{-d}^0 \mathbf{y}(\theta) f_{\sigma y}(\theta) \mathrm{d}\theta ,u\right )
    \end{equation}
    and similarly for the other derivatives
    $l_{\tilde{x}}$, $l_y$, $l_{\tilde{y}}$, $l_{xx}$, $l_{x\tilde{x}}$, $l_{\tilde{x}x}$, $l_{\tilde{x}\tilde{x}}$. For every $Z=(z,\mathbf{z})$, $Z_1 = (z_1, \mathbf{z}_1)$, $Z_2 =  (z_2, \mathbf{z}_2) \in H$ we have that
\begin{equation}
\begin{split}
    L_X(t,X,Y,u) Z& = l_x \cdot z +  l_{\tilde{x}} \cdot \int_{-d}^0 f_{lx}(\theta) \mathbf{z}(\theta)\mathrm{d} \theta \\
    L_Y(t,X,Y,u) Z &= l_y \cdot z +  l_{\tilde{y}} \cdot \int_{-d}^0  f_{ly}(\theta) \mathbf{z}(\theta) \mathrm{d}\theta \\ 
    L_{XX}(t,X,Y,u) Z_1 Z_2  &=  l_{xx} z_1 \cdot z_2 + l_{x\tilde{x}} \int_{-{d}}^0 f_{l x}(\theta) \mathbf{z}_1(\theta) \mathrm{d}\theta \cdot z_2 + l_{\tilde{x}x} z_1 \cdot \int_{-d}^0 f_{l x}(\theta) \mathbf{z}_2(\theta) \mathrm{d}\theta \\
    &\quad + l_{\tilde{x}\tilde{x}} \int_{-d}^0 f_{l x}(\theta _1) \mathbf{z}_1(\theta_1) \mathrm{d}\theta_1 \cdot \int_{-d}^0 f_{l x}(\theta_2) \mathbf{z}_2(\theta_2) \mathrm{d}\theta_2.
\end{split}
\end{equation}
In view of Assumption \ref{assumption_l_m} and since $f_{l x},f_{l y}\in L^2([-d,0];\mathbb{R})$, we have that for $t \in [0,T]$, $X,Y\in H$, $u\in U_{\text{ad}}$, $L_X(t,X,Y,u)$, $L_Y(t,X,Y,u)\in L(H,\R)$ are bounded by $C(1+\|X\|+\|Y\|+\|u\|_U)$, and $L_{XX}(t,X,Y,u) \in L(H \times H, \R)$ is bounded. Moreover, the bilinear form $L_{XX}(t,X,Y,u)$ can be identified with an operator from $H$ to $H$ which we also denote by $L_{XX}(t,X,Y,u): H \to H$, and which is given by
\begin{equation}
    L_{XX}(t,X,Y,u) Z = \begin{pmatrix}
        l_{xx} z + l_{x\tilde{x}} \int_{-d}^0 f_{lx}(\theta) \mathbf{z}(\theta) \mathrm{d}\theta \\
        \left ( l_{\tilde{x}x} z + l_{\tilde{x}\tilde{x}} \int_{-d}^0 f_{lx}(\theta) \mathbf{z}(\theta)\mathrm{d}\theta \right ) f_{lx}
    \end{pmatrix}.
\end{equation}
Since this operator has finite rank, it is Hilbert--Schmidt.
\end{proof}

\section{Variational Equations}\label{section_variational_equations}

For the remainder of this paper, let $u(\cdot)$ be an optimal control and let $X(\cdot)$ be the corresponding optimal trajectory. We introduce the following spike variation: For fixed $\tau\in (0,T)$, $0<\varepsilon<T-\tau$, and $v\in U_{\text{ad}}$, we set
\begin{equation}\label{var-u}
\begin{split}
    u^{\varepsilon}(t) = \begin{cases}
        v,& t\in [\tau,\tau+\varepsilon]\\
        u(t),&\text{otherwise}.
    \end{cases}
\end{split}
\end{equation}
Let $X^{\varepsilon}(\cdot)$ denote the solution of the lifted state equation \eqref{lifted_state_equation} with control $u^{\varepsilon}(\cdot)$. For simplicity, we are going to use the following notation. For a function $\phi$ defined on $[0,T]\times H\times H\times U_{\text{ad}}$, let
\begin{equation}
\begin{split}\label{phi-notation}
    \phi(t) &:= \phi(t,X(t),\mathbb{E}[X(t)],u(t))\\
    \phi^{\varepsilon}(t) &:= \phi(t,X^{\varepsilon}(t),\mathbb{E}[X^{\varepsilon}(t)],u^{\varepsilon}(t))\\
    %\phi_{X}(t) &:= \phi_X(t,X(t),\mathbb{E}[X(t)],u(t))\\
    %\phi_{XX}(t) &:= \phi_{XX}(t,X(t),\mathbb{E}[X(t)],u(t))\\
    %\phi_{Y}(t) &:= \phi_Y(t,X(t),\mathbb{E}[X(t)],u(t))\\
    \delta\phi(t) &:= \phi(t,X(t),\mathbb{E}[X(t)],u^{\varepsilon}(t)) - \phi(t,X(t),\mathbb{E}[X(t)],u(t)).
\end{split}
\end{equation}
Now, we introduce the first and second order variational processes. Let $Y^{\varepsilon}(\cdot)$ be the solution of the first order variational equation
\begin{equation}\label{first_variational_equation}
\begin{cases}
    \mathrm{d}Y^{\varepsilon}(t) = [ AY^{\varepsilon}(t) + B_X(t) Y^{\varepsilon}(t) + B_{Y}(t) \mathbb{E} [ Y^{\varepsilon}(t) ] + \delta B(t) ] \mathrm{d}t\\
    \qquad\qquad\qquad + \left [ \Sigma_X(t) Y^{\varepsilon}(t) + \Sigma_{Y}(t) \mathbb{E} [ Y^{\varepsilon}(t) ] + \delta \Sigma(t) \right ] \mathrm{d}W(t),\quad t\in[0,T]\\
    Y^{\varepsilon}(0) = 0 \in H
\end{cases}
\end{equation}
and let $Z^{\varepsilon}(\cdot)$ be the solution of the second order variational equation
\begin{equation}\label{second_variational_equation}
\begin{cases}
    \mathrm{d}Z^{\varepsilon}(t) = \Big [ AZ^{\varepsilon}(t) + B_X(t) Z^{\varepsilon}(t) + B_{Y}(t) \mathbb{E} [ Z^{\varepsilon}(t) ] + \delta B_X(t) Y^{\varepsilon}(t) + \frac12 B_{XX}(t) Y^{\varepsilon}(t)^2 \Big ] \mathrm{d}t\\% + \frac12 G B_{\mu X}(t) Y_0^{\varepsilon}(t) Y^{\varepsilon}(t) \big ] \mathrm{d}t\\
    \qquad\qquad + \Big [ \Sigma_X(t) Z^{\varepsilon}(t) + \Sigma_{Y}(t) \mathbb{E} [ Z^{\varepsilon}(t) ] + \delta \Sigma_X(t) Y^{\varepsilon}(t) + \frac12 \Sigma_{XX}(t) Y^{\varepsilon}(t)^2 \Big ] \mathrm{d}W(t),\quad t\in[0,T]\\ %+ \frac12 G \Sigma^i_{\mu X}(t) Y_0^{\varepsilon}(t) Y^{\varepsilon}(t) \big ] \mathrm{d}W^i(t),\quad t\in[0,T]\\
    Z^{\varepsilon}(0) = 0 \in H.
\end{cases}
\end{equation}

\begin{remark}\label{remark_second_lions_derivative}
    In the second order variational equation \eqref{second_variational_equation} one might expect to see the second order derivative with respect to $Y$. However, it turns out that this term can be neglected since it is of higher order, see Proposition \ref{proposition_asymptotic_expectation_new} below. Dropping this term also seems to be crucial, since it involves the term $\mathbb{E}[ Y^{\varepsilon}(t) ]^2$ for which it is not clear how it can be linearized using the tensor product $Y^{\varepsilon}(t) \otimes Y^{\varepsilon}(t)$.
\end{remark}

\subsection{Preliminary Estimates}

In this subsection, we prove preliminary estimates for the first and second order variational processes.

\begin{proposition}
    Let Assumption \ref{assumption_b_sigma} be satisfied. Then, we have for all $j \geq 1$,
    \begin{align}
        \label{asymptotic_X_X_e} \mathbb{E} \left [ \sup_{t\in [0,T]} \| X(t) - X^{\varepsilon}(t) \|^{2j} \right ] &= \mathcal{O}(\varepsilon^j)\\
        \label{asymptotic_Y_e} \mathbb{E} \left [ \sup_{t\in [0,T]} \| Y^{\varepsilon}(t) \|^{2j} \right ] &= \mathcal{O}(\varepsilon^j)\\
        \label{asymptotic_Z_e} \mathbb{E} \left [ \sup_{t\in [0,T]} \| Z^{\varepsilon}(t) \|^{2j} \right ] &= \mathcal{O}(\varepsilon^{2j}).
    \end{align}
\end{proposition}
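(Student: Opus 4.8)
The plan is to treat all three estimates by a single scheme: write each process in its mild (variation-of-constants) form, take $2j$-th moments, and reduce to a Gr\"onwall inequality in which the powers of $\varepsilon$ are produced entirely by the spike terms, which are supported on the interval $[\tau,\tau+\varepsilon]$ of length $\varepsilon$. Since $A$ generates a pseudo-contraction semigroup by Lemma \ref{lemma_A_pseudo_contraction}, we have $\| e^{(t-s)A} \|_{L(H)} \leq e^{\omega T}$ uniformly on $[0,T]$ for some $\omega \geq 0$, so the semigroup factor contributes only a constant and may be absorbed throughout. Moreover, since $\mathbb{R}^w$ is finite dimensional, the $L(\mathbb{R}^w,H)$-norm of the diffusion coefficients is equivalent to their Hilbert--Schmidt norm, so the stochastic integrals are well defined and the Burkholder--Davis--Gundy inequality applies with the $L_2(\mathbb{R}^w,H)$-norm.

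For \eqref{asymptotic_X_X_e} set $\Delta(t) = X^{\varepsilon}(t) - X(t)$ and subtract the two mild formulations. Splitting $B^{\varepsilon}(s) - B(s)$ (and similarly $\Sigma^{\varepsilon}(s) - \Sigma(s)$) into a state-difference part and a control-difference part $\delta B(s)$ (resp. $\delta \Sigma(s)$), the Lipschitz bounds \eqref{lip_B}--\eqref{lip_Sigma} control the first part by $C(\|\Delta(s)\| + \mathbb{E}\|\Delta(s)\|)$, while the linear growth coming from \eqref{bound_in_control} together with the ($2j$-th moment version of the) state bound \eqref{estimate_state} bounds the spike parts by $C(1+\|X(s)\|)\mathbf{1}_{[\tau,\tau+\varepsilon]}(s)$. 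Applying a maximal inequality for the stochastic convolution and H\"older's inequality on the time integrals, the deterministic convolution of the spike drift contributes $\mathcal{O}(\varepsilon^{2j})$ (H\"older on an interval of length $\varepsilon$ gives $\varepsilon^{2j-1}\cdot \varepsilon$), whereas the stochastic convolution of $\delta\Sigma$ contributes $\mathcal{O}(\varepsilon^{j})$ (Jensen/H\"older inside the $j$-th power of the quadratic variation gives $\varepsilon^{j-1}\cdot \varepsilon$). Writing $\varphi(t) = \mathbb{E}[\sup_{r\leq t}\|\Delta(r)\|^{2j}]$, we arrive at $\varphi(t) \leq C\int_0^t \varphi(s)\,\mathrm{d}s + C\varepsilon^{j}$, and Gr\"onwall's lemma yields \eqref{asymptotic_X_X_e}.

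The estimate \eqref{asymptotic_Y_e} follows verbatim: equation \eqref{first_variational_equation} is linear in $Y^{\varepsilon}$ with the bounded coefficients $B_X, B_Y, \Sigma_X, \Sigma_Y$ from Lemma \ref{lemma_B_Sigma_Lipschitz} and the same inhomogeneous spike terms $\delta B, \delta\Sigma$, so the identical argument gives $\mathbb{E}[\sup_t\|Y^{\varepsilon}\|^{2j}] \leq C\varepsilon^{j}$. For \eqref{asymptotic_Z_e} we use the mild form of \eqref{second_variational_equation}, again linear in $Z^{\varepsilon}$ with bounded coefficients, now driven by the forcing terms $\delta B_X(s) Y^{\varepsilon}(s)$, $\tfrac12 B_{XX}(s) Y^{\varepsilon}(s)^2$ and their $\Sigma$ analogues. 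The quadratic forcing is bounded in norm by $C\|Y^{\varepsilon}(s)\|^2$, so its contribution is controlled by $\mathbb{E}[\sup_t\|Y^{\varepsilon}\|^{4j}]$, which is $\mathcal{O}(\varepsilon^{2j})$ by \eqref{asymptotic_Y_e} applied with $2j$ replaced by $4j$; the mixed term $\delta B_X Y^{\varepsilon}$ is supported on $[\tau,\tau+\varepsilon]$ and contributes $\mathcal{O}(\varepsilon^{2j}\,\mathbb{E}[\sup_t\|Y^{\varepsilon}\|^{2j}]) = \mathcal{O}(\varepsilon^{3j})$, which is of lower order. The resulting inequality $\chi(t) \leq C\int_0^t \chi(s)\,\mathrm{d}s + C\varepsilon^{2j}$ for $\chi(t) = \mathbb{E}[\sup_{r\leq t}\|Z^{\varepsilon}(r)\|^{2j}]$ then gives \eqref{asymptotic_Z_e} by Gr\"onwall.

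The main obstacle is the maximal inequality for the stochastic convolution in the infinite-dimensional setting: because $t\mapsto \int_0^t e^{(t-s)A}\Phi(s)\,\mathrm{d}W(s)$ is not a martingale, one cannot apply Burkholder--Davis--Gundy directly, and must instead invoke the factorization method (or a maximal inequality adapted to pseudo-contraction semigroups) to move the supremum in $t$ inside the expectation while preserving the correct power of $\varepsilon$. A secondary point is that \eqref{estimate_state} is stated only for second moments, so one first needs its $2j$-th moment analogue, which follows from the same fixed-point/Gr\"onwall argument underlying Theorem \ref{well_posedness_lifted_state_equation}; the bookkeeping of which spike term produces $\varepsilon^{j}$ versus $\varepsilon^{2j}$ is then routine but must be carried out carefully.
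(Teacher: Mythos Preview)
Your proposal is correct and follows essentially the same route as the paper: mild formulation, a maximal inequality for the stochastic convolution (the paper invokes \cite[Lemma 3.3]{gawarecki_mandrekar_2011} for exactly this purpose), splitting into Lipschitz and spike contributions, and Gr\"onwall. The only minor slip is that the diffusion mixed term $\delta\Sigma_X Y^{\varepsilon}$ yields $\mathcal{O}(\varepsilon^{2j})$ rather than $\mathcal{O}(\varepsilon^{3j})$, but this is already sufficient and the argument is otherwise identical to the paper's.
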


\begin{proof} 
    Let us start with inequality \eqref{asymptotic_X_X_e}. For $s\in [0,t]$, we have
    \begin{equation}\label{estimate_1_X_X_e}
    \begin{split}
        \| X(s) - X^{\varepsilon}(s) \|^2 &\leq 2 \left \| \int_0^s e^{(s-r)A} ( B(r,X(r),\mathbb{E}[X(r)],u(r)) - B(r,X^{\varepsilon}(r),\mathbb{E}[X^{\varepsilon}(r)], u^{\varepsilon}(r)) ) \mathrm{d}r \right \|^2 \\
        &\quad + 2 \left \| \int_0^s e^{(s-r)A} ( \Sigma(r,X(r),\mathbb{E}[X(r)],u(r)) - \Sigma(r,X^{\varepsilon}(r),\mathbb{E}[X^{\varepsilon}(r)], u^{\varepsilon}(r)) ) \mathrm{d}W(r) \right \|^2.
    \end{split}
    \end{equation}
    Using \cite[Lemma 3.3]{gawarecki_mandrekar_2011}, we obtain for the stochastic integral
    \begin{equation}
    \begin{split}
        &\mathbb{E} \left [ \sup_{s\in [0,t]} \left \| \int_0^s e^{(s-r)A} ( \Sigma(r,X(r),\mathbb{E}[X(r)],u(r)) - \Sigma(r,X^{\varepsilon}(r),\mathbb{E}[X^{\varepsilon}(r)], u^{\varepsilon}(r)) ) \mathrm{d}W(r) \right \|^{2j} \right ]\\
        &\leq C \mathbb{E} \left [ \left ( \int_0^t \left \| \Sigma(r,X(r),\mathbb{E}[X(r)],u(r)) - \Sigma(r,X^{\varepsilon}(r),\mathbb{E}[X^{\varepsilon}(r)], u^{\varepsilon}(r)) \right \|_{L(\mathbb{R}^w,H)}^2 \mathrm{d}r \right )^j \right ].
    \end{split}
    \end{equation}
    Now, we note that
    \begin{equation}
    \begin{split}
        &\mathbb{E} \left [ \left ( \int_0^t \left \| \Sigma(r,X(r),\mathbb{E}[X(r)], u(r)) - \Sigma(r,X^{\varepsilon}(r),\mathbb{E}[X^{\varepsilon}(r)], u^{\varepsilon}(r)) \right \|_{L(\mathbb{R}^w,H)}^{2} \mathrm{d}r \right )^j \right ]\\
        &\leq C \mathbb{E} \left [ \int_0^t \left \| \Sigma(r,X(r),\mathbb{E}[X(r)], u(r)) - \Sigma(r,X^{\varepsilon}(r),\mathbb{E}[X^{\varepsilon}(r)], u(r)) \right \|_{L(\mathbb{R}^w,H)}^{2j} \mathrm{d}r \right ]\\
        &\quad + C\mathbb{E} \left [ \left ( \int_0^t \left \| \Sigma(r,X^{\varepsilon}(r),\mathbb{E}[X^{\varepsilon}(r)], u(r)) - \Sigma(r,X^{\varepsilon}(r),\mathbb{E}[X^{\varepsilon}(r)], u^{\varepsilon}(r)) \right \|_{L(\mathbb{R}^w,H)}^2 \mathrm{d}r \right )^j \right ].
    \end{split}
    \end{equation}
    Moreover, due to the Lipschitz continuity of $\Sigma$ in the second and third variable, we have
    \begin{equation}
        \mathbb{E} \left [ \int_0^t \left \| \Sigma(r,X(r),\mathbb{E}[X(r)], u(r)) - \Sigma(r,X^{\varepsilon}(r),\mathbb{E}[X^{\varepsilon}(r)], u(r)) \right \|_{L(\mathbb{R}^w,H)}^{2j} \mathrm{d}r \right ] \leq C \mathbb{E} \left [ \int_0^t \| X(r) - X^{\varepsilon}(r) \|^{2j} \mathrm{d}r \right ].
        %&\leq C \mathbb{E} \left [ \int_0^t \left ( \| X(r) - X^{\varepsilon}(r) \|^{2l} + | \mathbb{E} [ X_0(r) - X_0^{\varepsilon}(r) ] |^{2l} \right ) \mathrm{d}r \right ] \leq C \mathbb{E} \left [ \int_0^t \| X(r) - X^{\varepsilon}(r) \|^{2l} \mathrm{d}r \right ]
    \end{equation}
    Using similar estimates for the remaining terms on the right-hand side of inequality \eqref{estimate_1_X_X_e}, we obtain
    \begin{equation}
    \begin{split}
        \mathbb{E} \left [ \sup_{s\in [0,t]} \| X(s) - X^{\varepsilon}(s) \|^{2j} \right ] &\leq C \mathbb{E} \left [ \int_0^t \sup_{r\in [0,s]} \| X(r) - X^{\varepsilon}(r) \|^{2j} \mathrm{d}s \right ] \\
        &\quad + C \mathbb{E} \left [ \left ( \int_0^t \left \| B(r,X^{\varepsilon}(r),\mathbb{E}[X^{\varepsilon}(r)], u(r)) - B(r,X^{\varepsilon}(r),\mathbb{E}[X^{\varepsilon}(r)], u^{\varepsilon}(r)) \right \|^2 \mathrm{d}r \right )^j \right ]\\
        &\quad + C \mathbb{E} \left [ \left ( \int_0^t \left \| \Sigma(r,X^{\varepsilon}(r),\mathbb{E}[X^{\varepsilon}(r)], u(r)) - \Sigma(r,X^{\varepsilon}(r),\mathbb{E}[X^{\varepsilon}(r)], u^{\varepsilon}(r)) \right \|^2 \mathrm{d}r \right )^j \right ].
    \end{split}
    \end{equation}
    Since $u(\cdot)$ and $u^{\varepsilon}(\cdot)$ only differ on a set of measure $\varepsilon$, \eqref{asymptotic_X_X_e} follows from Gr\"onwall's inequality.

    Now, let us turn to inequality \eqref{asymptotic_Y_e}. For $s\in [0,t]$, we have
    \begin{equation}\label{estimate_1_Y_e}
    \begin{split}
        \| Y^{\varepsilon}(s) \|^2 &\leq 2 \left \| \int_0^s e^{(s-r)A} \left ( B_X(r) Y^{\varepsilon}(r) + B_{Y}(r) \mathbb{E}[ Y^{\varepsilon}(r) ] + \delta B(r) \right ) \mathrm{d}r \right \|^2 \\
        &\quad + 2 \left \| \int_0^s e^{(s-r)A} \left ( \Sigma_X(r) Y^{\varepsilon}(r) + \Sigma_{Y}(r) \mathbb{E}[ Y^{\varepsilon}(r) ] + \delta \Sigma(r) \right ) \mathrm{d}W(r) \right \|^2.
    \end{split}
    \end{equation}
    Applying \cite[Lemma 3.3]{gawarecki_mandrekar_2011} and Lemma \ref{lemma_B_Sigma_Lipschitz}, we obtain for the stochastic integral
    \begin{equation}
    \begin{split}
        &\mathbb{E} \left [ \sup_{s\in [0,t]} \left \| \int_0^s e^{(s-r)A} \left ( \Sigma_X(r) Y^{\varepsilon}(r) + \Sigma_{Y}(r) \mathbb{E}[ Y^{\varepsilon}(r) ] + \delta \Sigma(r) \right ) \mathrm{d}W(r) \right \|^{2j} \right ]\\
        &\leq C \mathbb{E} \left [ \sup_{s\in [0,t]} \left ( \int_0^s \left \| \Sigma_X(r) Y^{\varepsilon}(r) + \Sigma_{Y}(r) \mathbb{E}[ Y^{\varepsilon}(r) ] + \delta \Sigma(r) \right \|_{L(\mathbb{R}^w,H)}^2 \mathrm{d}r \right )^j \right ]\\
        &\leq C \mathbb{E} \left [ \int_0^t \sup_{r\in [0,s]} \| Y^{\varepsilon}(r) \|^{2j} \mathrm{d}r \right ] + \mathbb{E} \left [ \left ( \int_0^t \| \delta \Sigma(s) \|_{L(\mathbb{R}^w,H)}^2 \mathrm{d}r \right )^j \right ].
    \end{split}
    \end{equation}
    Using similar estimates for the remaining terms on the right-hand side of \eqref{estimate_1_Y_e}, we obtain
    \begin{equation}
    \begin{split}
        \mathbb{E} \left [ \sup_{s\in [0,t]} \| Y^{\varepsilon}(s) \|^{2j} \right ] &\leq C \mathbb{E} \left [ \int_0^t \sup_{r\in [0,s]} \| Y^{\varepsilon}(r) \|^{2j} \mathrm{d}s \right ] + \mathbb{E} \left [ \left ( \int_0^t \left ( \| \delta B(r) \|^2 + \| \delta \Sigma(r) \|_{L(\mathbb{R}^w,H)}^2 \right ) \mathrm{d}r \right )^j \right ],
    \end{split}
    \end{equation}
    and hence by Gr\"onwall's inequality
    \begin{equation}
    \begin{split}
        \mathbb{E} \left [ \sup_{s\in [0,T]} \| Y^{\varepsilon}(s) \|^{2j} \right ] &\leq C \mathbb{E} \left [ \left ( \int_0^T \left ( \| \delta B(r) \|^2 + \| \delta \Sigma(r) \|_{L(\mathbb{R}^w,H)}^2 \right ) \mathrm{d}r \right )^j \right ],
    \end{split}
    \end{equation}
    which shows \eqref{asymptotic_Y_e}.

    Finally, let us turn to \eqref{asymptotic_Z_e}. We have
    \begin{equation}\label{estimate_1_Z_e}
    \begin{split}
        \| Z^{\varepsilon}(s) \|^2&\leq 2 \left \| \int_0^s e^{(s-r)A} \left ( B_X(r) Z^{\varepsilon}(r) + B_{Y}(r) \mathbb{E} [ Z^{\varepsilon}(r) ] + \delta B_X(r) Y^{\varepsilon}(r) + \frac12 B_{XX}(r) Y^{\varepsilon}(r)^2 \right ) \mathrm{d}r \right \|^2 \\
        &\quad + 2 \left \| \int_0^s e^{(s-r)A} \left ( \Sigma_X(r) Z^{\varepsilon}(r) + \Sigma_{Y}(r) \mathbb{E} [ Z^{\varepsilon}(r) ] + \delta \Sigma_X(r) Y^{\varepsilon}(r) + \frac12 \Sigma_{XX}(r) Y^{\varepsilon}(r)^2 \right ) \mathrm{d}W(r) \right \|^2.
    \end{split}
    \end{equation}
    Applying \cite[Lemma 3.3]{gawarecki_mandrekar_2011} and Lemma \ref{lemma_B_Sigma_Lipschitz}, we obtain for the stochastic integral
    \begin{equation}
    \begin{split}
        &\mathbb{E} \left [ \sup_{s\in [0,t]} \left \| \int_0^s e^{(s-r)A} \left ( \Sigma_X(r) Z^{\varepsilon}(r) + \Sigma_{Y}(r) \mathbb{E} [ Z^{\varepsilon}(r) ] + \delta \Sigma_X(r) Y^{\varepsilon}(r) + \frac12 \Sigma_{XX}(r) Y^{\varepsilon}(r)^2 \right ) \mathrm{d}W(r) \right \|^{2j} \right ]\\
        &\leq C \mathbb{E} \left [ \left ( \int_0^s \left \| \Sigma_X(r) Z^{\varepsilon}(r) + \Sigma_{Y}(r) \mathbb{E} [ Z^{\varepsilon}(r) ] + \delta \Sigma_X(r) Y^{\varepsilon}(r) + \frac12 \Sigma_{XX}(r) Y^{\varepsilon}(r)^2 \right \|_{L(\mathbb{R}^w,H)}^2 \mathrm{d}r \right )^j \right ]\\
        &\leq C \mathbb{E} \left [ \int_0^s \sup_{r\in [0,s]} \| Z^{\varepsilon}(r) \|^{2j} \mathrm{d}r \right ] + C \mathbb{E} \left [ \left ( \int_0^t \| \delta \Sigma_X(s) \|_{L(\mathbb{R}^w,H)}^2 \mathrm{d}s \right )^{2j} + \sup_{s\in [0,t]} \| Y^{\varepsilon}(s) \|^{4j} \right ].
    \end{split}
    \end{equation}
    Using similar estimates for the remaining terms on the right-hand side of \eqref{estimate_1_Z_e}, we obtain
    \begin{equation}
        \mathbb{E} \left [ \sup_{s\in [0,t]} \| Z^{\varepsilon}(s) \|^{2j} \right ] \leq C \mathbb{E} \left [ \int_0^t \sup_{r\in [0,s]} \| Z^{\varepsilon}(r) \|^{2j} \mathrm{d}s + \left ( \int_0^t \| \delta \Sigma_X(s) \|_{L(\mathbb{R}^w,H)}^2 \mathrm{d}s \right )^{2j} + \sup_{s\in [0,t]} \| Y^{\varepsilon}(s) \|^{4j} \right ],
    \end{equation}
    and hence by Gr\"onwall's inequality
    \begin{equation}
    \begin{split}
        \mathbb{E} \left [ \sup_{s\in [0,T]} \| Z^{\varepsilon}(s) \|^{2j} \right ] &\leq C \mathbb{E} \left [ \left ( \int_0^t \| \delta \Sigma_X(s) \|_{L(\mathbb{R}^w,H)}^2 \mathrm{d}s \right )^{2j} + \sup_{s\in [0,t]} \| Y^{\varepsilon}(s) \|^{4j} \right ],
    \end{split}
    \end{equation}
    which, in view of \eqref{asymptotic_Y_e}, proves \eqref{asymptotic_Z_e}.
\end{proof}

Now, let us turn to the improved asymptotic for $\mathbb{E}[ Y^{\varepsilon}(t) ]$ as alluded to in Remark \ref{remark_second_lions_derivative}.

\begin{proposition}\label{proposition_asymptotic_expectation_new}
    Let Assumption \ref{assumption_b_sigma} be satisfied and let $\tau\in (0,T)$ be the time at which the spike variation is applied. Then, we have for all $j\geq 1$ and almost every $\tau\in [0,T]$,
    \begin{equation}\label{estimate_Y_varepsilon}
        \sup_{t\in[0,T]} \| \mathbb{E} [ Y^{\varepsilon}(t) ] \|^{j} = \mathcal{O}(\varepsilon^{j}).
        %\label{estimate_Z_varepsilon} \sup_{t\in [0,T]} \| \mathbb{E} [ Z^{\varepsilon}(t) ] \|^j &\leq \mathcal{O}(\varepsilon^{j}).
    \end{equation}
\end{proposition}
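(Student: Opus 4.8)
The plan is to first reduce the statement to the first moment and then establish a linear-in-$\varepsilon$ bound via duality. Since $\mathbb{E}[Y^{\varepsilon}(t)]$ is a deterministic element of $H$, we have $\|\mathbb{E}[Y^{\varepsilon}(t)]\|^{j} = (\|\mathbb{E}[Y^{\varepsilon}(t)]\|)^{j}$, so it suffices to prove $\sup_{t\in[0,T]}\|\mathbb{E}[Y^{\varepsilon}(t)]\| = \mathcal{O}(\varepsilon)$ and then raise to the power $j$. The naive route — taking expectations in \eqref{first_variational_equation}, so that the stochastic integral vanishes, and applying Gr\"onwall — fails, because it leaves the correlation term $\mathbb{E}[B_X(s)Y^{\varepsilon}(s)]$, which by \eqref{asymptotic_Y_e} is only $\mathcal{O}(\sqrt{\varepsilon})$ and hence yields the suboptimal bound $\mathcal{O}(\sqrt{\varepsilon})$. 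The virtue of the duality argument is that it automatically encodes the fact that the leading, order-$\sqrt{\varepsilon}$ part of $Y^{\varepsilon}$ is the mean-zero martingale generated by the diffusion spike $\delta\Sigma$ on $[\tau,\tau+\varepsilon]$, so that only the genuinely order-$\varepsilon$ contributions survive in the expectation.

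Concretely, I fix a terminal time $t$ and introduce the adjoint object \eqref{bsde_dual}: an operator-valued, McKean--Vlasov backward equation $(P(\cdot),Q(\cdot))$ on $[0,t]$, dual to the first order variational equation \eqref{first_variational_equation}. Since the sources $\delta B = G\,\delta b$ and $\delta\Sigma = G\,\delta\sigma$ take values in the finite-dimensional range of $G$, see \eqref{B_and_Sigma}, it is enough to propagate $P(s)$ as an element of $L(\mathbb{R}^n,H)$, which is automatically Hilbert--Schmidt; this circumvents the usual obstruction that the identity is not Hilbert--Schmidt and places \eqref{bsde_dual} within the Hilbert-space-valued theory of Appendix \ref{appendix_Operator_Valued_BSDE}, whose well-posedness and a priori bounds, together with Theorem \ref{BSDE_existence_of_solution} for the McKean--Vlasov structure, I invoke with terminal condition $P(t)=G$. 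The coefficients of \eqref{bsde_dual} are chosen so that, applying It\^o's formula to the pairing of $Y^{\varepsilon}(s)$ with $P(s)$ in mild form, the terms $B_X(s)Y^{\varepsilon}(s)$ and $\Sigma_X(s)Y^{\varepsilon}(s)$ cancel against the adjoints $B_X(s)^{\ast}$, $\Sigma_X(s)^{\ast}$ (using the semigroup duality between $A$ and $A^{\ast}$ and the Hilbert--Schmidt property of Lemma \ref{lemma_B_Sigma_Lipschitz}), while the mean-field terms $B_Y(s)\mathbb{E}[Y^{\varepsilon}(s)]$ and $\Sigma_Y(s)\mathbb{E}[Y^{\varepsilon}(s)]$ cancel, \emph{in expectation}, against the McKean--Vlasov terms $\mathbb{E}[B_Y(s)^{\ast}\,\cdot\,]$ and $\mathbb{E}[\Sigma_Y(s)^{\ast}\,\cdot\,]$ of \eqref{bsde_dual}; here one uses crucially that $\mathbb{E}[Y^{\varepsilon}(s)]$ is deterministic. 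Together with $Y^{\varepsilon}(0)=0$ this telescopes to the representation
\[
\mathbb{E}[Y^{\varepsilon}(t)] = \mathbb{E}\left[\int_{\tau}^{\tau+\varepsilon}\left(P(s)\,\delta b(s) + Q(s)\,\delta\sigma(s)\right)\mathrm{d}s\right],
\]
with the natural contraction of $Q(s)$ against the $w$ columns of $\delta\sigma(s)$, the integral being restricted to $[\tau,\tau+\varepsilon]$ since $\delta b,\delta\sigma$ vanish off the spike.

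It remains to estimate the two terms uniformly in $t$. The drift contribution is harmless: $\delta b$ is bounded and $\sup_{s}\mathbb{E}\|P(s)\|^{2}$ is bounded uniformly in $t$ by the a priori estimate, so it is $\mathcal{O}(\varepsilon)$. The diffusion contribution is the crux; by Cauchy--Schwarz it is bounded by $(\mathbb{E}\int_{\tau}^{\tau+\varepsilon}|\delta\sigma(s)|^{2}\,\mathrm{d}s)^{1/2}(\mathbb{E}\int_{\tau}^{\tau+\varepsilon}\|Q(s)\|^{2}\,\mathrm{d}s)^{1/2}$. The first factor is $\mathcal{O}(\sqrt{\varepsilon})$ since $\delta\sigma$ is bounded, and for the second I use that $s\mapsto\mathbb{E}\|Q(s)\|^{2}$ lies in $L^{1}([0,T])$: at every Lebesgue point $\tau$ one has $\int_{\tau}^{\tau+\varepsilon}\mathbb{E}\|Q(s)\|^{2}\,\mathrm{d}s = \mathcal{O}(\varepsilon)$, so the diffusion term is $\mathcal{O}(\varepsilon)$ as well. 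This is precisely where the restriction to \emph{almost every} $\tau$ enters. Collecting both bounds and taking the supremum over $t\in[0,T]$ gives $\sup_{t}\|\mathbb{E}[Y^{\varepsilon}(t)]\| = \mathcal{O}(\varepsilon)$, and raising to the power $j$ completes the proof.

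I expect two main obstacles. First, justifying the duality identity rigorously in the mild formulation with the unbounded generator $A$ and the operator-valued, mean-field adjoint \eqref{bsde_dual}, and checking that its coefficients produce exactly the cancellations described above. Second, and more delicate, obtaining a priori bounds on $(P,Q)$ that are uniform in the terminal time $t$, so that both the drift estimate and the Lebesgue-point estimate on $\mathbb{E}\|Q\|^{2}$ hold uniformly in $t$; the cleanest remedy is to realize the family \eqref{bsde_dual} through a single propagator on $[0,T]$, so that $s\mapsto\mathbb{E}\|Q(s)\|^{2}$ is one fixed integrable function whose Lebesgue-point set of admissible $\tau$ is independent of $t$.
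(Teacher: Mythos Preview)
Your high-level strategy --- pair $Y^{\varepsilon}$ with a mean-field adjoint BSDE of the form \eqref{bsde_dual} so that all linear terms cancel in expectation and only the spike sources survive --- is exactly the paper's. But your implementation diverges from the paper's at the crucial point, and the obstacle you flag at the end is real and not resolved by the remedy you propose.

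The concrete gap is the uniformity in the terminal time $t$. With terminal condition at $t$, the adjoint pair $(P^{t},Q^{t})$ depends on $t$, and the function $s\mapsto\mathbb{E}\|Q^{t}(s)\|^{2}$ is a \emph{different} $L^{1}$ function for each $t$. Lebesgue differentiation at $\tau$ therefore produces a $t$-dependent null set, and intersecting over uncountably many $t$ need not leave a set of full measure. Your ``single propagator on $[0,T]$'' does not fix this: for a mean-field linear BSDE there is no flow property linking the solutions with different terminal times to a single object whose $Q$-part would give one fixed integrable density in $s$. (There is also a smaller confusion in your representation: with terminal datum $G$ in $L(\mathbb{R}^{n},H)$ for \eqref{bsde_dual}, the It\^o pairing yields $\mathbb{E}[G^{\ast}Y^{\varepsilon}(t)]=\mathbb{E}[y^{\varepsilon}(t)]\in\mathbb{R}^{n}$, not $\mathbb{E}[Y^{\varepsilon}(t)]\in H$; this is reparable via the semigroup structure, but you do not do it.)

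The paper sidesteps the uniformity issue by a genuinely different organisation. It does \emph{not} aim at $\mathbb{E}[Y^{\varepsilon}(t)]$ directly. Instead it runs \eqref{bsde_dual} in $L_{2}(H)$ with an arbitrary Hilbert--Schmidt terminal datum $\phi^{\ast}(s)$, derives the identity for $\mathbb{E}[\phi(s)Y^{\varepsilon}(s)]$, and then \emph{integrates over the terminal time} $s\in[0,T]$ before passing to the limit $\varepsilon\to0$. Fubini turns the family $\{q^{s}\}_{s}$ into one integrable function of $(s,r)$, so a single application of Lebesgue differentiation at $\tau$ gives
\[
\int_{0}^{T}\bigl\|\mathbb{E}[\phi(s)Y^{\varepsilon}(s)]\bigr\|\,\mathrm{d}s=\mathcal{O}(\varepsilon)
\]
for \emph{every} $\phi\in L^{2}(\Omega\times[0,T];L_{2}(H))$. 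The second idea --- and this is what your proposal is missing --- is to feed this back into the expectation of the mild form of \eqref{first_variational_equation}: taking expectations kills the stochastic integral, and the only term that is not already $\mathcal{O}(\varepsilon)$ is $\int_{0}^{t}\|\mathbb{E}[B_{X}(s)Y^{\varepsilon}(s)]\|\,\mathrm{d}s$. Now one uses the structural fact of Lemma~\ref{lemma_B_Sigma_Lipschitz} that $B_{X}(s)$ is Hilbert--Schmidt (finite rank, since $B$ takes values in $\mathrm{range}\,G$), so one may choose $\phi=B_{X}$ in the integrated estimate above; Gr\"onwall then gives $\sup_{t}\|\mathbb{E}[Y^{\varepsilon}(t)]\|=\mathcal{O}(\varepsilon)$.

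So the missing idea is not the duality itself but the two-step indirection: first an \emph{integrated} $\mathcal{O}(\varepsilon)$ bound for $\mathbb{E}[\phi\,Y^{\varepsilon}]$ with Hilbert--Schmidt $\phi$, then the choice $\phi=B_{X}$ in the expected mild equation. This is what replaces your unattainable terminal condition $I_{H}$ and simultaneously eliminates the uniformity-in-$t$ problem.
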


\begin{proof}
    The proof is based on a duality argument between $Y^{\varepsilon}$ and the solution of an operator-valued BSDE with terminal time $s\in [0,T]$ and appropriately chosen $\mathcal{F}_s$-measurable terminal condition $\phi^*(s)\in L^2(\Omega;L_2(H))$. We consider the family of equations
    \begin{equation}\label{bsde_dual}
    \begin{cases}
        - \mathrm{d} p^s(r) = \left [ A^* p^s(r) + B_X^*(r) p^s(r) + \mathbb{E} [ B_{Y}^*(r) p^s(r) ] + \Sigma^*_X(r) q^s(r) + \mathbb{E} [ \Sigma^*_{Y}(r) q^s(r) ] \right ] \mathrm{d}r - q^s(r) \mathrm{d}W(r)\\
        p^s(s) = \phi^*(s),
    \end{cases}
    \end{equation}    
    for $0\leq r\leq s \leq T$. By Theorem \ref{BSDE_existence_of_solution}, for every $s\in (0,T]$, this equation admits a unique mild solution $(p^s,q^s)$. For $\lambda>0$, let $A_{\lambda}$ be the Yosida approximation of $A$. Let $Y^{\varepsilon,\lambda}$ be the solution of equation \eqref{first_variational_equation} with $A$ replaced by $A_{\lambda}$, and let $(p^{s,\lambda},q^{s,\lambda})$ be the solution of equation \eqref{bsde_dual} with $A$ replaced by $A_{\lambda}$. Then, by It\^o's formula, we have for $h\in H$
    \begin{equation}\label{Ito_p_t_Y_epsilon}
    \begin{split}
        \mathbb{E} \left [ \langle \phi(s) Y^{\varepsilon,\lambda}(s), h \rangle \right ]
        %&=\mathbb E[ \mathrm{d}\langle Y^{\varepsilon,\lambda}(r) , p^{s,\lambda}(r) h \rangle]\\
        %&= \langle Y^{\varepsilon}(r), \mathrm{d}p^s(r) h \rangle + \langle \mathrm{d}Y^{\varepsilon}(r), p^s(r) h \rangle + \langle Y^{\varepsilon}, p^s h \rangle_r\\
        &= \mathbb{E} \Bigg [ - \int_0^s \bigg \langle Y^{\varepsilon,\lambda}(r), A_{\lambda}^* p^{s,\lambda}(r) h + B_X^*(r) p^{s,\lambda}(r) h\\
        &\qquad\qquad\qquad + \mathbb{E} [ B_Y^*(r) p^{s,\lambda}(r) h ] + \Sigma^*_X(r) q^{s,\lambda}(r) h + \mathbb{E} [ \Sigma^*_Y(r) q^{s,\lambda}(r)h ] \bigg \rangle \mathrm{d}r \\
        &\qquad\quad + \int_0^s \langle A_{\lambda}Y^{\varepsilon,\lambda}(r) + B_X(r) Y^{\varepsilon,\lambda}(r) + B_Y(r) \mathbb{E} [ Y^{\varepsilon,\lambda}(r) ] + \delta B(r), p^{s,\lambda}(r) h \rangle \mathrm{d}r\\
        &\qquad\quad + \int_0^s \text{Tr} ( ( \Sigma_X(r) Y^{\varepsilon,\lambda}(r) + \Sigma_Y(r) \mathbb{E}[Y^{\varepsilon,\lambda}(r)] + \delta \Sigma(r) )^* q^{s,\lambda}(r) h ) \mathrm{d}r \Bigg ].
    \end{split}
    \end{equation}
    Noting that
    \begin{equation}
        \text{Tr}( (\Sigma_X(r) Y^{\varepsilon,\lambda}(r) )^* q^{s,\lambda}(r) h ) = \langle \Sigma_X(r) Y^{\varepsilon,\lambda}(r), q^{s,\lambda}(r) h \rangle_{L_2(\mathbb{R}^w,H)} = \langle Y^{\varepsilon,\lambda}(r), \Sigma^*_X(r) q^{s,\lambda}(r) h \rangle,
    \end{equation}
    and with a similar calculation for the term involving $\Sigma_Y(r) \mathbb{E} [ Y^{\varepsilon}(r)]$ we obtain from \eqref{Ito_p_t_Y_epsilon}
    \begin{equation}
       \mathbb{E} \left [ \langle \phi(s) Y^{\varepsilon,\lambda}(s), h \rangle \right ] = \mathbb{E} \left [ \int_0^s \left ( \langle \delta B(r), p^{s,\lambda}(r) h \rangle + \text{Tr}(\delta \Sigma^*(r) q^{s,\lambda}(r) h ) \right ) \mathrm{d}r \right ].
    \end{equation}
    Taking the limit $\lambda \to 0$, we obtain from Theorem \ref{theorem_general_bsde_yosida}
    \begin{equation}
       \mathbb{E} \left [ \langle \phi(s) Y^{\varepsilon}(s), h \rangle \right ] = \mathbb E \left [ \int_0^s \left ( \langle \delta B(r), p^{s}(r) h \rangle + \text{Tr}(\delta \Sigma^*(r) q^{s}(r) h ) \right ) \mathrm{d}r \right ].
    \end{equation}
    Now, we identify the map $(h \mapsto \text{Tr}(\delta \Sigma^*(r) q^s(r) h ) ) \in H^*$ with an element in $H$, denoted by $\text{Tr}(\delta \Sigma^*(r) q^s(r) ) $. Then, we obtain
    \begin{equation}\label{equality_1_y_epsilon_p_B_sigma_q}
    \begin{split}
        \mathbb{E} \left [ \phi(s) Y^{\varepsilon}(s) \right ] 
        &= \mathbb{E} \left [ \int_0^s \left ( p^{s}(r)^* \delta B(r) + \text{Tr} ( \delta \Sigma^*(r) q^{s}(r) ) \right ) \mathrm{d}r \right ].
    \end{split}
    \end{equation}
    We have
    \begin{equation}
        \| \text{Tr} ( \delta \Sigma^*(r) q^{s}(r)) \| \leq C \| \delta \Sigma(r) \|_{L(\mathbb{R}^w,H)} \| q^{s}(r) \|_{L(\mathbb{R}^w,L_2(H))}.
    \end{equation}
    Thus,
    \begin{equation}\label{estimate_1_p_B_sigma_q}
    \begin{split}
        &\left \| \mathbb{E} \left [ \int_0^s \left ( p^{s}(r) \delta B(r) + \text{Tr} ( \delta \Sigma^*(r) q^{s}(r) ) \right ) \mathrm{d}r \right ] \right \|\\
        &\leq \int_0^s \left ( \mathbb{E} \left [ \| p^{s}(r) \|^2 \right ]^{\frac12} \mathbb{E} \left [ \| \delta B(r) \|^2 \right ]^{\frac12} + \mathbb{E} \left [ \| \delta \Sigma(r) \|_{L(\mathbb{R}^w,H)}^2 \right ]^{\frac12} \mathbb{E} \left [ \| q^{s}(r) \|_{L(\mathbb{R}^w,L_2(H))}^2 \right ]^{\frac12} \right ) \mathrm{d}r.
    \end{split}
    \end{equation}
    Let us now assume without loss of generality that $\tau <s$ (otherwise $Y^{\varepsilon}(r) = 0$ for all $0\leq r\leq s$), and let us extend $p^s(r)$ and $q^s(r)$ by setting $p^s(r)=q^s(r) = 0$ for all $0\leq s<r\leq T$. Then, integrating over $s\in [0,T]$, dividing by $\varepsilon$ and taking the limit $\varepsilon\to 0$, we obtain for the first term on the right-hand side
    \begin{equation}
    \begin{split}
        &\lim_{\varepsilon\to 0} \frac{1}{\varepsilon} \int_{\tau}^{\tau+\varepsilon} \int_0^T \mathbb{E} \left [ \| p^s(r) \|^2_{L_2(H)} \right ]^{\frac12} \mathrm{d}s\; \mathbb{E} \left [ \| \delta B(r) \|^2 \right ]^{\frac12} \mathrm{d}r\\
        &= \mathbb{E} \left [ \| B(\tau,X(\tau),\mathbb{E}[X(\tau)],v) - B(\tau,X(\tau),\mathbb{E}[X(\tau)],u(\tau)) \|^2 \right ]^{\frac12} \int_0^T \mathbb{E} \left [ \| p^s(r) \|_{L_2(H)}^2 \right ]^{\frac12} \mathrm{d}s.
    \end{split}
    \end{equation}
    For the second term on the right-hand side of equation \eqref{estimate_1_p_B_sigma_q}, for $\varepsilon>0$ sufficiently small, we have
    \begin{equation}
    \begin{split}
        &\int_0^s \mathbb{E} \left [ \| \delta \Sigma(r) \|_{L(\mathbb{R}^w,H)}^2 \right ]^{\frac12} \mathbb{E} \left [ \| q^{s}(r) \|_{L(\mathbb{R}^w,L_2(H))}^2 \right ]^{\frac12} \mathrm{d}r\\
        &= \int_{\tau}^{\tau+\varepsilon} \mathbb{E} \left [ \| \Sigma(r,X(r),\mathbb{E}[X(r)],v) - \Sigma(r,X(r),\mathbb{E}[X(r)],u(r)) \|_{L(\mathbb{R}^w,H)}^2 \right ]^{\frac12} \mathbb{E} \left [ \| q^{s}(r) \|_{L(\mathbb{R}^w,L_2(H))}^2 \right ]^{\frac12} \mathrm{d}r.
    \end{split}
    \end{equation}
    Integrating over $s\in [0,T]$, dividing by $\varepsilon$ and taking the limit $\varepsilon\to 0$, we obtain by Lebesgue's differentiation theorem
    \begin{equation}
    \begin{split}
        &\lim_{\varepsilon\to 0} \frac{1}{\varepsilon} \int_{\tau}^{\tau+\varepsilon} \mathbb{E} \left [ \| \Sigma(r,X(r),\mathbb{E}[X(r)],v) - \Sigma(r,X(r),\mathbb{E}[X(r)],u(r)) \|_{L(\mathbb{R}^w,H)}^2 \right ]^{\frac12} \int_0^T \mathbb{E} \left [ \| q^{s}(r) \|_{L(\mathbb{R}^w,L_2(H))}^2 \right ]^{\frac12} \mathrm{d}s \mathrm{d}r\\
        &= \mathbb{E} \left [ \| \Sigma(\tau,X(\tau),\mathbb{E}[X(\tau)],v) - \Sigma(\tau,X(\tau),\mathbb{E}[X(\tau)],u(\tau)) \|_{L(\mathbb{R}^w,H)}^2 \right ]^{\frac12} \int_0^T \mathbb{E} \left [ \| q^{s}(\tau) \|_{L(\mathbb{R}^w,L_2(H))}^2 \right ]^{\frac12} \mathrm{d}s.
    \end{split}
    \end{equation}
    Thus, we derive from \eqref{equality_1_y_epsilon_p_B_sigma_q} and \eqref{estimate_1_p_B_sigma_q}
    \begin{equation}\label{intermediate_asymptotic}
    \begin{split}
        &\lim_{\varepsilon\to 0} \frac{1}{\varepsilon} \int_0^T \| \mathbb{E} [ \phi(s) Y^{\varepsilon}(s) ] \| \mathrm{d}s\\
        &\leq \mathbb{E} \left [ \| B(\tau,X(\tau),\mathbb{E}[X(\tau)],v) - B(\tau,X(\tau),\mathbb{E}[X(\tau)],u(\tau)) \|^2 \right ]^{\frac{1}{2}} \int_0^T \mathbb{E} \left [ \| p^{s}(\tau) \|_{L(H)}^2 \right ]^{\frac{1}{2}} \mathrm{d}s\\
        &\quad + \mathbb{E} \left [ \| \Sigma(\tau,X(\tau),\mathbb{E}[X(\tau)],v) - \Sigma(\tau,X(\tau),\mathbb{E}[X(\tau)],u(\tau)) \|^2 \right ]^{\frac{1}{2}} \int_0^T \mathbb{E} \left [ \| q^{s}(\tau) \|_{L(\mathbb{R}^w,L_2(H))}^2 \right ]^{\frac12} \mathrm{d}s,
    \end{split}
    \end{equation}
    Note that, by Theorem \ref{BSDE_existence_of_solution}, we have
    \begin{equation}
    \begin{split}
        &\int_0^T \int_0^T \mathbb{E} \left [ \| p^s(r) \|_{L_2(H)}^2 + \| q^s(r) \|_{L(\mathbb{R}^w,L_2(H))}^2 \right ] \mathrm{d}s \mathrm{d}r \\
        &= \int_0^T \int_0^s \mathbb{E} \left [ \| p^s(r) \|_{L_2(H)}^2 + \| q^s(r) \|_{L(\mathbb{R}^w,L_2(H))}^2 \right ] \mathrm{d}r \mathrm{d}s \leq C \mathbb{E} \left [ \int_0^T \| \phi(s) \|_{L_2(H)}^2 \mathrm{d}s \right ].
    \end{split}
    \end{equation}
    Thus, for $\phi(s) \in L^2(\Omega\times [0,T]; L_2(H))$, $\int_0^T \mathbb{E} [ \| p^s(\tau) \|_{L_2(H)}^2 + \| q^s(\tau) \|_{L(\mathbb{R}^w,L_2(H))}^2 ]^{1/2} \mathrm{d}s$ is finite for almost every $\tau\in [0,T]$, and therefore by \eqref{intermediate_asymptotic}
    \begin{equation}\label{intermediate_claim}
        \int_0^T \| \mathbb{E} [ \phi(s) Y^{\varepsilon}(s) ] \| \mathrm{d}s = \mathcal{O}(\varepsilon).
    \end{equation}
    Moreover, for any $t\in [0,T]$, we have
    \begin{equation}
    \begin{split}
        \sup_{s\in [0,t]} \| \mathbb{E} [ Y^{\varepsilon}(s) ] \| &= \sup_{s\in [0,t]} \left \| \mathbb{E} \left [ \int_0^s e^{(s-r)A} ( B_X(r) Y^{\varepsilon}(r) + B_Y(r) \mathbb{E} [ Y^{\varepsilon}(r) ] + \delta B(r) ) \mathrm{d}r \right ] \right \|\\
        &\leq C \int_0^t \left ( \left \| \mathbb{E} [ B_X(s) Y^{\varepsilon}(s) ] \right \| + \sup_{r\in [0,s]} \left \| \mathbb{E} [ Y^{\varepsilon}(r) ] \right \| + \left \| \mathbb{E} [ \delta B(s) ] \right \| \right ) \mathrm{d}s.
    \end{split}
    \end{equation}
    By Lemma \ref{lemma_B_Sigma_Lipschitz}, $B_X(\cdot) \in L^2(\Omega \times [0,T]; L_2(H))$. Hence, choosing $\phi(s) = B_X(s)$ for $s\in [0,T]$, we derive from \eqref{intermediate_claim} that
    \begin{equation}
        \int_0^t \left \| \mathbb{E} [ B_X(s) Y^{\varepsilon}(s) ] \right \| \mathrm{d}s = \mathcal{O}(\varepsilon).
    \end{equation}
    Moreover,
    \begin{equation}
        \int_0^T \| \mathbb{E} [ \delta B(s) ] \| \mathrm{d}s = \mathcal{O}(\varepsilon).
    \end{equation}
    Therefore, applying Gr\"onwall's inequality concludes the proof.
\end{proof}

\begin{remark}\label{remark-improved-estimate}
    We note that a crucial point in proving estimate \eqref{estimate_Y_varepsilon} is the fact the $B_X(s)$, $s\in [0,T]$, is a Hilbert--Schmidt operator. This Hilbert--Schmidt property is due to the special structure of the problem: $B$ takes values in the finite dimensional component of the space $H$.
\end{remark}

Now, we are in a position to prove the asymptotics for $X^{\varepsilon}(t) - X(t) - Y^{\varepsilon}(t)$ and $X^{\varepsilon}(t) - X(t) - Y^{\varepsilon}(t) - Z^{\varepsilon}(t)$. The result in the following proposition matches the classical result without law dependence, see e.g. \cite[Proposition 4.4]{fuhrman_hu_tessitore_2013}. However, the proof of \eqref{asymptotic_X_e_X_Y_e} typically requires a second order derivative of the coefficients $B$ and $\Sigma$ with respect to $Y$, which is not available to us. Moreover, in the equation for $Z^{\varepsilon}$ one would expect the second order derivative with respect to $Y$ which is not present in our equation for $Z^{\varepsilon}$. For these reasons, the proofs require the improved asymptotic for $\mathbb{E}[ Y^{\varepsilon}(t) ]$ proved in the previous proposition.

\begin{proposition}
    Let Assumption \ref{assumption_b_sigma} be satisfied. Then, we have for all $j\geq 1$ and almost every $\tau\in (0,T)$
    \begin{align}
        \label{asymptotic_X_e_X_Y_e} \mathbb{E} \left [ \sup_{t\in [0,T]} \| X^{\varepsilon}(t) - X(t) - Y^{\varepsilon}(t)\|^{2j} \right ] &= \mathcal{O}(\varepsilon^{2j})\\
        \label{asymptotic_X_e_X_Y_e_Z_e} \mathbb{E} \left [ \sup_{t\in [0,T]} \| X^{\varepsilon}(t) - X(t) - Y^{\varepsilon}(t) - Z^{\varepsilon}(t) \|^{2j} \right ] &= o(\varepsilon^{2j})
    \end{align}
\end{proposition}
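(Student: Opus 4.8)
The plan is to treat the two estimates in turn. In both cases I derive the mild equation satisfied by the relevant remainder process, decompose its drift and diffusion sources into a part linear in the remainder (absorbed by Gr\"onwall's inequality) and a genuine source part, and then close the estimate using \cite[Lemma 3.3]{gawarecki_mandrekar_2011} (as in the preceding proposition) together with the pseudo-contractivity of $e^{tA}$ from Lemma \ref{lemma_A_pseudo_contraction}. I abbreviate $\Delta X = X^{\varepsilon} - X$ and $\Delta m = \mathbb{E}[X^{\varepsilon}] - \mathbb{E}[X]$, and recall from \eqref{asymptotic_X_X_e}--\eqref{asymptotic_Z_e} that $\Delta X$ and $Y^{\varepsilon}$ are of order $\varepsilon^{1/2}$ and $Z^{\varepsilon}$ of order $\varepsilon$ in every $L^{2j}$-norm, and from Proposition \ref{proposition_asymptotic_expectation_new} that the \emph{deterministic} quantity $\mathbb{E}[Y^{\varepsilon}(t)]$ is of order $\varepsilon$ uniformly in $t$; since $\|\mathbb{E}[Z^{\varepsilon}]\| \le (\mathbb{E}[\sup_t\|Z^{\varepsilon}\|^2])^{1/2}$ is $\mathcal{O}(\varepsilon)$ as well, once \eqref{asymptotic_X_e_X_Y_e} is proved the full mean difference $\Delta m$ is $\mathcal{O}(\varepsilon)$.

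For \eqref{asymptotic_X_e_X_Y_e}, set $\xi^{\varepsilon} = X^{\varepsilon} - X - Y^{\varepsilon}$. Subtracting \eqref{first_variational_equation} from the mild equations for $X^{\varepsilon}$ and $X$, the drift source of $\xi^{\varepsilon}$ is $S_B = (B^{\varepsilon}(t) - B(t) - \delta B(t)) - B_X(t) Y^{\varepsilon} - B_Y(t)\mathbb{E}[Y^{\varepsilon}]$, with an analogous diffusion source $S_{\Sigma}$. The decisive device is to expand $B^{\varepsilon}(t) - B(t) - \delta B(t)$, which off the spike equals $B(t,X^{\varepsilon},\mathbb{E}[X^{\varepsilon}],u) - B(t,X,\mathbb{E}[X],u)$, by varying the law argument \emph{first} and the state argument second:
\[
B(t,X^{\varepsilon},\mathbb{E}[X^{\varepsilon}],u) - B(t,X,\mathbb{E}[X],u) = \check{B}_Y^{\varepsilon}\, \Delta m + \check{B}_X\, \Delta X,
\]
where $\check{B}_X = \int_0^1 B_X(t, X+\lambda \Delta X, \mathbb{E}[X], u)\,\mathrm{d}\lambda$ carries its law argument frozen at the optimal $\mathbb{E}[X]$, and $\check{B}_Y^{\varepsilon} = \int_0^1 B_Y(t, X^{\varepsilon}, \mathbb{E}[X]+\mu\Delta m, u)\,\mathrm{d}\mu$. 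Writing $\Delta X = Y^{\varepsilon} + \xi^{\varepsilon}$ and $\Delta m = \mathbb{E}[Y^{\varepsilon}] + \mathbb{E}[\xi^{\varepsilon}]$, the source becomes $\check{B}_X \xi^{\varepsilon} + \check{B}_Y^{\varepsilon}\mathbb{E}[\xi^{\varepsilon}] + (\check{B}_X - B_X(t)) Y^{\varepsilon} + (\check{B}_Y^{\varepsilon} - B_Y(t))\mathbb{E}[Y^{\varepsilon}]$. The first two terms are linear in $\xi^{\varepsilon}$ with bounded coefficients and go into Gr\"onwall; in the third term $\check{B}_X - B_X(t)$ differs only in the state argument, so the boundedness of $B_{XX}$ from Lemma \ref{lemma_B_Sigma_Lipschitz} gives $\|\check{B}_X - B_X(t)\| \le C\|\Delta X\|$, whence $\|(\check{B}_X - B_X(t)) Y^{\varepsilon}\| \le C\|\Delta X\|\|Y^{\varepsilon}\|$ is $\mathcal{O}(\varepsilon)$ in $L^{2j}$ by Cauchy--Schwarz; the fourth term is bounded by $C\|\mathbb{E}[Y^{\varepsilon}]\| = \mathcal{O}(\varepsilon)$ using only the boundedness of $B_Y$ and Proposition \ref{proposition_asymptotic_expectation_new}. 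Treating $S_{\Sigma}$ identically and bounding the spike $[\tau,\tau+\varepsilon]$ crudely (a set of measure $\varepsilon$ on which every integrand is $\mathcal{O}(\varepsilon^{1/2})$), Gr\"onwall's inequality yields \eqref{asymptotic_X_e_X_Y_e}.

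For \eqref{asymptotic_X_e_X_Y_e_Z_e}, set $\zeta^{\varepsilon} = \xi^{\varepsilon} - Z^{\varepsilon}$; from \eqref{asymptotic_X_e_X_Y_e} and \eqref{asymptotic_Z_e} I already have the a priori bound $\mathbb{E}[\sup_t\|\zeta^{\varepsilon}\|^{2j}] = \mathcal{O}(\varepsilon^{2j})$, which I use to dispose of all terms that are quadratic in, or cross terms with, $\zeta^{\varepsilon}$. Now I expand $B^{\varepsilon}(t) - B(t) - \delta B(t)$ to second order in the state and first order in the law, again keeping $B_X, B_{XX}$ anchored at $\mathbb{E}[X]$: the state part is $B_X(t)\Delta X + \frac12 B_{XX}(t)(\Delta X)^2 + r^B$ with remainder $r^B = \int_0^1 (1-\lambda)(B_{XX}(t, X+\lambda\Delta X, \mathbb{E}[X], u) - B_{XX}(t))(\Delta X)^2\,\mathrm{d}\lambda$, and the law part is $\check{B}_Y^{\varepsilon}\Delta m$. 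Subtracting the drift of \eqref{second_variational_equation} and inserting $\Delta X = Y^{\varepsilon}+Z^{\varepsilon}+\zeta^{\varepsilon}$, $\Delta m = \mathbb{E}[Y^{\varepsilon}]+\mathbb{E}[Z^{\varepsilon}]+\mathbb{E}[\zeta^{\varepsilon}]$, the drift source of $\zeta^{\varepsilon}$ reduces (off the spike) to the linear terms $B_X(t)\zeta^{\varepsilon} + \check{B}_Y^{\varepsilon}\mathbb{E}[\zeta^{\varepsilon}]$, the quadratic source arising from $\frac12 B_{XX}(t)((\Delta X)^2 - (Y^{\varepsilon})^2)$ whose terms free of $\zeta^{\varepsilon}$ are products of $Y^{\varepsilon}$ and $Z^{\varepsilon}$ and hence $\mathcal{O}(\varepsilon^{3/2}) = o(\varepsilon)$ in $L^{2j}$, the cross terms in $\zeta^{\varepsilon}$ (handled by the a priori bound), and the two critical terms $r^B$ and $(\check{B}_Y^{\varepsilon} - B_Y(t))(\mathbb{E}[Y^{\varepsilon}]+\mathbb{E}[Z^{\varepsilon}])$.

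These last two terms are exactly the ones for which no second-order law derivative is available, and both are controlled by combining the improved mean asymptotic with dominated convergence. The factor $\mathbb{E}[Y^{\varepsilon}]+\mathbb{E}[Z^{\varepsilon}]$ is \emph{deterministic} and $\mathcal{O}(\varepsilon)$, while $\check{B}_Y^{\varepsilon} - B_Y(t)$ is bounded and tends to $0$ almost surely as $\varepsilon\to 0$ (continuity of $B_Y$, $X^{\varepsilon}\to X$, $\Delta m\to 0$); pulling out the $\varepsilon$ and applying dominated convergence to the remaining time integral shows this contribution is $o(\varepsilon)$ in $L^{2j}$. Likewise $\|r^B\| \le \omega^{\varepsilon}\|\Delta X\|^2$ with $\omega^{\varepsilon}$ bounded and $\to 0$ almost surely by continuity of $B_{XX}$, so Cauchy--Schwarz against $\|\Delta X\|^2 = \mathcal{O}(\varepsilon)$ together with dominated convergence again gives $o(\varepsilon)$. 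On the spike, the terms $\delta B_X(t) Y^{\varepsilon}$ and $\delta \Sigma_X(t) Y^{\varepsilon}$ built into \eqref{second_variational_equation} cancel the first-order part of $B^{\varepsilon}(t)-B(t)-\delta B(t)$ and $\Sigma^{\varepsilon}(t)-\Sigma(t)-\delta\Sigma(t)$, leaving an integrand of order strictly higher than $\varepsilon$ after integration over the interval of length $\varepsilon$. Processing the diffusion source through \cite[Lemma 3.3]{gawarecki_mandrekar_2011} in the same way and invoking Gr\"onwall's inequality proves \eqref{asymptotic_X_e_X_Y_e_Z_e}. The main obstacle throughout is precisely the absence of any second-order derivative of the coefficients in the law argument: it is overcome for \eqref{asymptotic_X_e_X_Y_e} by the law-first expansion, which keeps every surviving state derivative anchored at the optimal law, and for \eqref{asymptotic_X_e_X_Y_e_Z_e} by playing the sharp $\mathcal{O}(\varepsilon)$ bound on $\mathbb{E}[Y^{\varepsilon}]$ and $\mathbb{E}[Z^{\varepsilon}]$ against the mere continuity of $B_Y$ and $B_{XX}$ via dominated convergence.
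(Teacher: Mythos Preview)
Your proposal is correct and follows essentially the same route as the paper: derive the mild equation for the remainder, decompose the source via first- or second-order Taylor expansion into a linear-in-remainder part absorbed by Gr\"onwall and a genuine source part, control the state terms by boundedness of $B_{XX},\Sigma_{XX}$ together with \eqref{asymptotic_X_X_e}--\eqref{asymptotic_Z_e}, and dispose of the law terms using the sharp bound on $\mathbb{E}[Y^{\varepsilon}]$ from Proposition~\ref{proposition_asymptotic_expectation_new} (for \eqref{asymptotic_X_e_X_Y_e}) combined with continuity of $B_Y,B_{XX}$ and dominated convergence (for \eqref{asymptotic_X_e_X_Y_e_Z_e}). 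The only cosmetic differences are that you split on/off spike explicitly whereas the paper carries $u^{\varepsilon}$ in the Taylor expansion and splits the control difference afterwards, and that you expand the law argument first while the paper expands the state argument first; both orderings produce the same structure since in each case $B_X$ ends up evaluated at the frozen law $\mathbb{E}[X]$, which is precisely what allows the use of $B_{XX}$ without a mixed $B_{XY}$ derivative.
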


\begin{proof}
    Let us first prove inequality \eqref{asymptotic_X_e_X_Y_e}. Note that
    \begin{equation}
    \begin{cases}
        \mathrm{d} ( X^{\varepsilon}(t) - X(t) - Y^{\varepsilon}(t)) = A ( X^{\varepsilon}(t) - X(t) - Y^{\varepsilon}(t)) \mathrm{d}t\\
        \qquad\qquad\qquad\qquad\qquad\qquad + [ B^{\varepsilon}(t) - B(t) - B_X(t) Y^{\varepsilon}(t) - B_{Y}(t) \mathbb{E} [ Y^{\varepsilon}(t) ] - \delta B(t) ] \mathrm{d}t\\
        \qquad\qquad\qquad\qquad\qquad\qquad + [ \Sigma^{\varepsilon}(t) - \Sigma(t) - \Sigma_X(t) Y^{\varepsilon}(t) - \Sigma_{Y}(t) \mathbb{E} [ Y^{\varepsilon}(t) ] - \delta \Sigma(t) ]\mathrm{d}W(t)\\
        X^{\varepsilon}(0) - X(0) - Y^{\varepsilon}(0) = 0.
    \end{cases}
    \end{equation}
    Therefore, for $s\in [0,t]$, we have
    \begin{equation}\label{estimate_1_X_e_X_Y_e}
    \begin{split}
        &\| X^{\varepsilon}(s) - X(s) - Y^{\varepsilon}(s) \|^2\\
        &\leq 2 \left \| \int_0^s e^{(s-r)A} \left ( B^{\varepsilon}(r) - B(r) - B_X(r) Y^{\varepsilon}(r) - B_{Y}(r) \mathbb{E} [ Y^{\varepsilon}(r) ] - \delta B(r) \right ) \mathrm{d}r \right \|^2\\
        &\quad + 2 \left \| \int_0^s e^{(s-r)A} \left ( \Sigma^{\varepsilon}(r) - \Sigma(r) - \Sigma_X(r) Y^{\varepsilon}(r) - \Sigma_{Y}(r) \mathbb{E} [ Y^{\varepsilon}(r) ] - \delta \Sigma(r) \right ) \mathrm{d}W(r) \right \|^2.
    \end{split}
    \end{equation}
    Applying \cite[Lemma 3.3]{gawarecki_mandrekar_2011}, we obtain for the stochastic integral
    \begin{equation}\label{bdg_X_e_X_Y_e}
    \begin{split}
        &\mathbb{E} \left [ \sup_{s\in [0,t]} \left \| \int_0^s e^{(s-r)A} \left ( \Sigma^{\varepsilon}(r) - \Sigma(r) - \Sigma_X(r) Y^{\varepsilon}(r) - \Sigma_{Y}(r) \mathbb{E} [ Y^{\varepsilon}(r) ] - \delta \Sigma(r) \right ) \mathrm{d}W(r) \right \|^{2j} \right ]\\
        &\leq C \mathbb{E} \left [ \left ( \int_0^t \left \| \Sigma^{\varepsilon}(r) - \Sigma(r) - \Sigma_X(r) Y^{\varepsilon}(r) - \Sigma_{Y}(r) \mathbb{E} [ Y^{\varepsilon}(r) ] - \delta \Sigma(r) \right \|_{L(\mathbb{R}^w,H)}^2 \mathrm{d}r \right )^{j} \right ].
    \end{split}
    \end{equation}
    We have
    \begin{equation}\label{decomposition_Sigma}
    \begin{split}
        &\Sigma^{\varepsilon}(r) - \Sigma(r) - \Sigma_X(r) Y^{\varepsilon}(r) - \Sigma_Y(r) \mathbb{E} [ Y^{\varepsilon}(r) ] - \delta \Sigma(r)\\
        &= \int_0^1 \bigg ( \Sigma_X(r,X(r) + \theta (X^{\varepsilon}(r) - X(r)), \mathbb{E} [ X(r) ] , u^{\varepsilon}(r)) ( X^{\varepsilon}(r) - X(r) - Y^{\varepsilon}(r))\\
        &\qquad\qquad + \Sigma_Y(r,X^{\varepsilon}(r), \mathbb{E} [ X(r) + \theta ( X^{\varepsilon}(r)- X(r)) ], u^{\varepsilon}(r)) \mathbb{E} [ X^{\varepsilon}(r) - X(r) - Y^{\varepsilon}(r) ] \\
        &\qquad\qquad + \left ( \Sigma_X(r,X(r) + \theta (X^{\varepsilon}(r) - X(r)), \mathbb{E} [ X(r) ] , u^{\varepsilon}(r)) - \Sigma_X(r) \right ) Y^{\varepsilon}(r) \\
        &\qquad\qquad + \left ( \Sigma_Y(r,X^{\varepsilon}(r), \mathbb{E} [ X(r) + \theta (X^{\varepsilon}(r) - X(r)) ] , u^{\varepsilon}(r) ) - \Sigma_Y(r) \right ) \mathbb{E} [ Y^{\varepsilon}(r) ] \bigg ) \mathrm{d}\theta.
    \end{split}
    \end{equation}
    Regarding the first and second term on the right-hand side, we have due to Lemma \ref{lemma_B_Sigma_Lipschitz}
    \begin{equation}\label{decomposition_sigma_term_1_and_2}
    \begin{split}
        &\mathbb{E} \left [ \left ( \int_0^t \left \| \int_0^1 \Sigma_X(r,X(r) + \theta (X^{\varepsilon}(r) - X(r)), \mathbb{E} [ X(r) ] , u^{\varepsilon}(r)) ( X^{\varepsilon}(r) - X(r) - Y^{\varepsilon}(r)) \mathrm{d}\theta \right \|_{L(\mathbb{R}^w,H)}^{2} \mathrm{d}r \right )^j \right ]\\
        &+ \mathbb{E} \left [ \left ( \int_0^t \left \| \int_0^1 \Sigma_Y(r,X^{\varepsilon}(r), \mathbb{E} [ X(r) + \theta ( X^{\varepsilon}(r)- X(r)) ], u^{\varepsilon}(r)) \mathbb{E} [ X^{\varepsilon}(r) - X(r) - Y^{\varepsilon}(r) ] \mathrm{d}\theta \right \|_{L(\mathbb{R}^w,H)}^{2} \mathrm{d}r \right )^j \right ]\\
        &\leq C \mathbb{E} \left [ \int_0^t \sup_{r\in [0,s]} \left \| X^{\varepsilon}(r) - X(r) - Y^{\varepsilon}(r) \right \|^{2j} \mathrm{d}s \right ].
    \end{split}
    \end{equation}
    For the third term on the right-hand side of \eqref{decomposition_Sigma}, we have
    \begin{equation}\label{decompisition_Sigma_intermediate_1}
    \begin{split}
        &\mathbb{E} \left [ \left ( \int_0^t \left \| \int_0^1 \left ( \Sigma_X(r,X(r) + \theta (X^{\varepsilon}(r) - X(r)), \mathbb{E} [ X(r) ] , u^{\varepsilon}(r)) - \Sigma_X(r) \right ) Y^{\varepsilon}(r) \mathrm{d}\theta \right \|_{L(\mathbb{R}^w,H)}^{2} \mathrm{d}r \right )^j \right ]\\
        &\leq C \mathbb{E} \Bigg [ \bigg ( \int_0^T \int_0^1 \| \Sigma_X(r,X(r) + \theta (X^{\varepsilon}(r) - X(r)), \mathbb{E} [ X(r) ] , u^{\varepsilon}(r))\\
        &\qquad\qquad\qquad\qquad\quad - \Sigma_X(r,X(r) + \theta (X^{\varepsilon}(r) - X(r)), \mathbb{E} [ X(r) ] , u(r)) \|_{L(H,L(\mathbb{R}^w,H))}^{2} \mathrm{d}\theta \mathrm{d}r \bigg )^{2j} \Bigg ]^{\frac12} \mathbb{E} \left [ \sup_{r\in [0,T]} \| Y^{\varepsilon}(r) \|^{4j} \right ]^{\frac12}\\
        &\quad + C \mathbb{E} \left [ \int_0^T \int_0^1 \| \Sigma_X(r,X(r) + \theta (X^{\varepsilon}(r) - X(r)), \mathbb{E} [ X(r) ] , u(r)) - \Sigma_X(r) \|_{L(H,L(\mathbb{R}^w,H))}^{4j} \mathrm{d}\theta \mathrm{d}r \right ]^{\frac12} \mathbb{E} \left [ \sup_{r\in [0,T]} \| Y^{\varepsilon}(r) \|^{4j} \right ]^{\frac12}.
    \end{split}
    \end{equation}
    Since $\Sigma_X$ is bounded, we have for the first term on the right-hand side of this equation
    \begin{equation}
    \begin{split}
        &\mathbb{E} \Bigg [ \bigg ( \int_0^T \int_0^1 \| \Sigma_X(r,X(r) + \theta (X^{\varepsilon}(r) - X(r)), \mathbb{E} [ X(r) ] , u^{\varepsilon}(r))\\
        &\qquad\qquad\qquad\qquad - \Sigma_X(r,X(r) + \theta (X^{\varepsilon}(r) - X(r)), \mathbb{E} [ X(r) ] , u(r)) \|_{L(H,L(\mathbb{R}^w,H))}^{2} \mathrm{d}\theta \mathrm{d}r \bigg )^{2j} \Bigg ]^{\frac12} = \mathcal{O}(\varepsilon^j).
    \end{split}
    \end{equation}
    Moreover, using \eqref{asymptotic_X_X_e}, we obtain for the second term on the right-hand side of \eqref{decompisition_Sigma_intermediate_1}
    \begin{equation}
    \begin{split}
        &\mathbb{E} \left [ \int_0^T \int_0^1 \| \Sigma_X(r,X(r) + \theta (X^{\varepsilon}(r) - X(r)), \mathbb{E} [ X(r) ] , u(r)) - \Sigma_X(r) \|_{L(H,L(\mathbb{R}^w,H))}^{4j} \mathrm{d}\theta \mathrm{d}r \right ]^{\frac12}\\
        &= \mathbb{E} \left [ \int_0^T \int_0^1 \left \| \int_0^1 \theta \Sigma_{XX}(r,X(r) + \gamma\theta (X^{\varepsilon}(r) - X(r)), \mathbb{E} [ X(r) ] , u(r)) (X^{\varepsilon}(r) - X(r)) \mathrm{d}\gamma \right \|_{L(H,L(\mathbb{R}^w,H))}^{4j} \mathrm{d}\theta \mathrm{d}r \right ]^{\frac12}\\
        &\leq \mathbb{E} \left [ \sup_{r\in [0,T]} \| X^{\varepsilon}(r) - X(r) \|^{4j} \right ]^{\frac12} = \mathcal{O}(\varepsilon^j).
    \end{split}
    \end{equation}
    Thus, due to \eqref{asymptotic_Y_e}, we derive from \eqref{decompisition_Sigma_intermediate_1}
    \begin{equation}\label{decomposition_Sigma_intermediate_2}
        \mathbb{E} \left [ \left ( \int_0^t \left \| \int_0^1 \left ( \Sigma_X(r,X(r) + \theta (X^{\varepsilon}(r) - X(r)), \mathbb{E} [ X(r) ] , u^{\varepsilon}(r)) - \Sigma_X(r) \right ) Y^{\varepsilon}(r) \mathrm{d}\theta \right \|_{L(\mathbb{R}^w,H)}^{2} \mathrm{d}r \right )^j \right ] = \mathcal{O}(\varepsilon^{2j}).
    \end{equation}
    For the fourth term in \eqref{decomposition_Sigma}, we have due to Proposition \ref{proposition_asymptotic_expectation_new}
    \begin{equation}\label{decomposition_Sigma_term_4}
    \begin{split}
        &\mathbb{E} \left [ \left ( \int_0^t \left \| \int_0^1 \left ( \Sigma_Y(r,X^{\varepsilon}(r), \mathbb{E} [ X(r) + \theta (X^{\varepsilon}(r) - X(r)) ] , u^{\varepsilon}(r)) - \Sigma_Y(r) \right ) \mathbb{E} [ Y^{\varepsilon}(r) ] \mathrm{d}\theta \right \|_{L(H,L(\mathbb{R}^w,H))}^{2} \mathrm{d}r \right )^j \right ]\\
        &\leq \sup_{r\in [0,T]} \left \| \mathbb{E} \left [ Y^{\varepsilon}(r) \right ] \right \|^{2j} = \mathcal{O}(\varepsilon^{2j}).
    \end{split}
    \end{equation}
    Therefore, using \eqref{decomposition_Sigma}, \eqref{decomposition_sigma_term_1_and_2}, \eqref{decomposition_Sigma_intermediate_2} and \eqref{decomposition_Sigma_term_4}, we derive from \eqref{bdg_X_e_X_Y_e}
    \begin{equation}\label{tbd}
    \begin{split}
        &\mathbb{E} \left [ \sup_{s\in [0,t]} \left \| \int_0^s e^{(s-r)A} \left ( \Sigma^{\varepsilon}(r) - \Sigma(r) - \Sigma_X(r) Y^{\varepsilon}(r) - \Sigma_{Y}(r) \mathbb{E} [ Y^{\varepsilon}(r) ] - \delta \Sigma(r) \right ) \mathrm{d}W(r) \right \|^{2j} \right ]\\
        &\leq C \mathbb{E} \left [ \int_0^t \sup_{r\in [0,s]} \left \| X^{\varepsilon}(r) - X(r) - Y^{\varepsilon}(r) \right \|^{2j} \mathrm{d}s \right ] + \mathcal{O}(\varepsilon^{2j}).
    \end{split}
    \end{equation}
    Using similar arguments, we can obtain the same bound for the term involving $B$ in equation \eqref{estimate_1_X_e_X_Y_e}. Thus, taking the power $j$, the supremum over $s\in [0,t]$ and the expectation in inequality \eqref{estimate_1_X_e_X_Y_e}, we obtain
    \begin{equation}
        \mathbb{E} \left [ \sup_{r\in [0,t]} \left \| X^{\varepsilon}(r) - X(r) - Y^{\varepsilon}(r) \right \|^{2j} \right ] \leq C \int_0^t \mathbb{E} \left [ \sup_{r\in [0,s]} \| X^{\varepsilon}(r) - X(r) - Y^{\varepsilon}(r) \|^{2j} \right ] \mathrm{d}s + \mathcal{O}(\varepsilon^{2j}).
    \end{equation}
    Now, the result follows from Gr\"onwall's inequality.

    Next, let us turn to inequality \eqref{asymptotic_X_e_X_Y_e_Z_e}. Let $\eta^{\varepsilon}(t) := X^{\varepsilon}(t) - X(t) - Y^{\varepsilon}(t) - Z^{\varepsilon}(t)$. Then, we have
    \begin{equation}
    \begin{cases}
        \mathrm{d}\eta^{\varepsilon}(t) = [ A \eta^{\varepsilon}(t) + \alpha^{\varepsilon}(t) ] \mathrm{d}t + \beta^{\varepsilon}(t) \mathrm{d}W(t),\quad t\in [0,T]\\
        \eta^{\varepsilon}(0) = 0 \in H,
    \end{cases}
    \end{equation}
    where
    \begin{equation}
    \begin{split}
        \alpha^{\varepsilon}(t) &= B^{\varepsilon}(t) - B(t) - B_X(t) (Y^{\varepsilon}(t) + Z^{\varepsilon}(t)) - B_{Y}(t) \mathbb{E} [ Y^{\varepsilon}(t) + Z^{\varepsilon}(t) ]\\
        &\quad - \frac12 B_{XX}(t) Y^{\varepsilon}(t)^2 - \delta B(t) - \delta B_X(t) Y^{\varepsilon}(t)
    \end{split}
    \end{equation}
    and
    \begin{equation}
    \begin{split}
        \beta^{\varepsilon}(t) &= \Sigma^{\varepsilon}(t) - \Sigma(t) - \Sigma_X(t) (Y^{\varepsilon}(t) + Z^{\varepsilon}(t)) - \Sigma_{Y}(t) \mathbb{E} [ Y^{\varepsilon}(t) + Z^{\varepsilon}(t) ]\\
        &\quad - \frac12 \Sigma_{XX}(t) Y^{\varepsilon}(t)^2 - \delta \Sigma(t) - \delta \Sigma_X(t) Y^{\varepsilon}(t).
    \end{split}
    \end{equation}
    Thus, we have
    \begin{equation}\label{estimate_eta_epsilon}
        \| \eta^{\varepsilon}(s) \|^2 \leq 2 \left \| \int_0^s e^{(s-r)A} \alpha^{\varepsilon}(r) \mathrm{d}r \right \|^2 + 2 \left \| \int_0^s e^{(s-r)A} \beta^{\varepsilon}(r) \mathrm{d}W(r) \right \|^2.
    \end{equation}
    Applying \cite[Lemma 3.3]{gawarecki_mandrekar_2011}, we obtain for the stochastic integral
    \begin{equation}
        \mathbb{E} \left [ \sup_{s\in [0,t]} \left \| \int_0^s e^{(s-r)A} \beta^{\varepsilon}(r) \mathrm{d}W(r) \right \|^{2j} \right ] \leq C \mathbb{E} \left [ \int_0^t \| \beta^{\varepsilon}(s) \|_{L(\mathbb{R}^w,H)}^{2j} \mathrm{d}s \right ].
    \end{equation}
    Therefore, from \eqref{estimate_eta_epsilon} we obtain
    \begin{equation}\label{estimate_eta_epsilon_2}
        \mathbb{E} \left [ \sup_{s\in [0,t]} \| \eta^{\varepsilon}(s) \|^{2j} \right ] \leq C \mathbb{E} \left [ \int_0^t \left ( \| \alpha^{\varepsilon}(s) \|^{2j} + \| \beta^{\varepsilon}(s) \|_{L(\mathbb{R}^w,H)}^{2j} \right ) \mathrm{d}s \right ].
    \end{equation}
    Next, we are going to rewrite $\alpha^{\varepsilon}$. To this end, first note that
    \begin{equation}
    \begin{split}
        &B^{\varepsilon}(s) - B(s) - B_X(s) (Y^{\varepsilon}(s) + Z^{\varepsilon}(s))- B_{Y}(s) \mathbb{E} [ Y^{\varepsilon}(s) + Z^{\varepsilon}(s) ] - \delta B(s)\\
        &= \int_0^1 \bigg ( B_X(s,X(s) + \theta ( X^{\varepsilon}(s) - X(s) ), \mathbb{E} [ X(s) ] , u^{\varepsilon}(s) ) \eta^{\varepsilon}(s)\\
        &\qquad\qquad + B_Y(s,X^{\varepsilon}(s), \mathbb{E} [ X(s) + \theta ( X^{\varepsilon}(s) - X(s) ) ] , u^{\varepsilon}(s) ) \mathbb{E} [ \eta^{\varepsilon}(s) ]\\
        &\qquad\qquad + ( B_X(s,X(s) + \theta ( X^{\varepsilon}(s) - X(s) ), \mathbb{E} [ X(s) ] , u^{\varepsilon}(s) ) - B_X(s) ) ( Y^{\varepsilon}(s) + Z^{\varepsilon}(s))\\
        &\qquad\qquad + ( B_Y(s,X^{\varepsilon}(s) , \mathbb{E} [ X(s) + \theta ( X^{\varepsilon}(s) - X(s) ) ] , u^{\varepsilon}(s) ) - B_Y(s) ) \mathbb{E} [ Y^{\varepsilon}(s) + Z^{\varepsilon}(s) ] \bigg ) \mathrm{d}\theta.
    \end{split}
    \end{equation}
    Moreover, we have
    \begin{equation}
    \begin{split}
        &B_X(s,X(s) + \theta ( X^{\varepsilon}(s) - X(s) ), \mathbb{E} [ X(s) ] , u^{\varepsilon}(s) ) - B_X(s) - \delta B_X(s)\\
        &= \int_0^1 \theta B_{XX}(s,X(s)+\gamma\theta(X^{\varepsilon}(s)-X(s)),\mathbb{E}[X(s)],u^{\varepsilon}(s)) \eta^{\varepsilon}(s) \mathrm{d}\gamma\\
        &\quad + \int_0^1 \theta B_{XX}(s,X(s) + \gamma\theta(X^{\varepsilon}(s)-X(s)),\mathbb{E}[X(s)],u^{\varepsilon}(s))(Y^{\varepsilon}(s)+Z^{\varepsilon}(s)) \mathrm{d}\gamma.
    \end{split}
    \end{equation}
    Therefore,
    \begin{equation}
    \begin{split}
        &\int_0^1 ( B_X(s,X(s) + \theta ( X^{\varepsilon}(s) - X(s) ), \mathbb{E} [ X(s) ] , u^{\varepsilon}(s) ) - B_X(s) ) Y^{\varepsilon}(s) \mathrm{d}\theta - \delta B_X(s) Y^{\varepsilon}(s) - \frac12 B_{XX}(s) Y^{\varepsilon}(s)^2\\
        &= \int_0^1 \int_0^1 \theta B_{XX}(s,X(s)+\gamma\theta (X^{\varepsilon}(s)-X(s)), \mathbb{E}[X(s)],u^{\varepsilon}(s)) \eta^{\varepsilon}(s) Y^{\varepsilon}(s) \mathrm{d}\gamma\mathrm{d}\theta\\
        &\quad + \int_0^1 \int_0^1 \theta B_{XX}(s,X(s)+\gamma\theta (X^{\varepsilon}(s)-X(s)), \mathbb{E}[X(s)],u^{\varepsilon}(s)) Z^{\varepsilon}(s) Y^{\varepsilon}(s) \mathrm{d}\gamma\mathrm{d}\theta\\
        &\quad + \int_0^1 \int_0^1 \theta \left ( B_{XX}(s,X(s)+\gamma\theta(X^{\varepsilon}(s)-X(s)), \mathbb{E}[X(s)],u^{\varepsilon}(s)) - B_{XX}(s) \right ) Y^{\varepsilon}(s)^2 \mathrm{d}\gamma\mathrm{d}\theta.
    \end{split}
    \end{equation}
    Altogether, we obtain
    \begin{equation}\label{alpha_decomposition}
    \begin{split}
        \alpha^{\varepsilon}(s) &= \int_0^1 B_X(s,X(s) + \theta ( X^{\varepsilon}(s) - X(s) ), \mathbb{E} [ X(s) ] , u^{\varepsilon}(s) ) \eta^{\varepsilon}(s) \mathrm{d}\theta\\
        &\quad + \int_0^1 B_Y(s,X^{\varepsilon}(s), \mathbb{E} [ X(s) + \theta ( X^{\varepsilon}(s) - X(s) ) ] , u^{\varepsilon}(s) ) \mathbb{E} [ \eta^{\varepsilon}(s) ] \mathrm{d}\theta \\
        &\quad + \int_0^1 ( B_X(s,X(s) + \theta ( X^{\varepsilon}(s) - X(s) ), \mathbb{E} [ X(s) ] , u^{\varepsilon}(s) ) - B_X(s) ) Z^{\varepsilon}(s) \mathrm{d}\theta\\
        &\quad + \int_0^1 ( B_Y(s,X^{\varepsilon}(s) , \mathbb{E} [ X(s) + \theta ( X^{\varepsilon}(s) - X(s) ) ] , u^{\varepsilon}(s) ) - B_Y(s) ) \mathbb{E} [ Y^{\varepsilon}(s) + Z^{\varepsilon}(s) ] \mathrm{d}\theta\\
        &\quad + \int_0^1 \int_0^1 \theta B_{XX}(s,X(s)+\gamma\theta (X^{\varepsilon}(s)-X(s)), \mathbb{E}[X(s)],u^{\varepsilon}(s)) \eta^{\varepsilon}(s) Y^{\varepsilon}(s) \mathrm{d}\gamma\mathrm{d}\theta\\
        &\quad + \int_0^1 \int_0^1 \theta B_{XX}(s,X(s)+\gamma\theta (X^{\varepsilon}(s)-X(s)), \mathbb{E}[X(s)],u^{\varepsilon}(s)) Z^{\varepsilon}(s) Y^{\varepsilon}(s) \mathrm{d}\gamma\mathrm{d}\theta\\
        &\quad + \int_0^1 \int_0^1 \theta \left ( B_{XX}(s,X(s)+\gamma\theta(X^{\varepsilon}(s)-X(s)), \mathbb{E}[X(s)],u^{\varepsilon}(s)) - B_{XX}(s) \right ) Y^{\varepsilon}(s)^2 \mathrm{d}\gamma\mathrm{d}\theta.
    \end{split}
    \end{equation}
    Since, $B_X$ and $B_Y$ are bounded, we have for the first two terms on the right-hand side of this equation,
    \begin{equation}
    \begin{split}
        &\mathbb{E} \left [ \left ( \int_0^t \left \| \int_0^1 B_X(s,X(s) + \theta ( X^{\varepsilon}(s) - X(s) ), \mathbb{E} [ X(s) ] , u^{\varepsilon}(s) ) \eta^{\varepsilon}(s) \mathrm{d}\theta \right \|^2 \mathrm{d}s \right )^{j} \right ]\\
        &+ \mathbb{E} \left [ \left ( \int_0^t \left \| \int_0^1 B_Y(s,X^{\varepsilon}(s), \mathbb{E} [ X(s) + \theta ( X^{\varepsilon}(s) - X(s) ) ] , u^{\varepsilon}(s) ) \mathbb{E} [ \eta^{\varepsilon}(s) ] \mathrm{d}\theta \right \|^2 \mathrm{d}s \right )^j \right ] \leq C \int_0^t \mathbb{E} \left [ \sup_{r\in [0,s]} \| \eta^{\varepsilon}(r) \|^{2j} \right ] \mathrm{d}s.
    \end{split}
    \end{equation}
    For the third term on the right-hand side of \eqref{alpha_decomposition}, we have by \eqref{asymptotic_Z_e} and Lebesgue's dominated convergence theorem
    \begin{equation}\label{estimate_B_X_Z}
    \begin{split}
        &\mathbb{E} \left [ \left ( \int_0^t \left \| \int_0^1 ( B_X(s,X(s) + \theta ( X^{\varepsilon}(s) - X(s) ), \mathbb{E} [ X(s) ] , u^{\varepsilon}(s) ) - B_X(s) ) Z^{\varepsilon}(s) \mathrm{d}\theta \right \|^2 \mathrm{d}s \right )^{j} \right ]\\
        &\leq \mathbb{E} \Bigg [ \int_0^T \int_0^1 \| B_X(s,X(s)+\theta(X^{\varepsilon}(s)-X(s)),\mathbb{E}[X(s)],u^{\varepsilon}(s))\\
        &\qquad\qquad\qquad\qquad - B_X(s,X(s)+\theta(X^{\varepsilon}(s)-X(s)),\mathbb{E}[X(s)],u(s)) \|_{L(H)}^{4j} \mathrm{d}\theta \mathrm{d}s \Bigg ]^{\frac12} \mathbb{E} \left [ \int_0^T \| Z^{\varepsilon}(s) \|^{4j} \mathrm{d}s \right ]^{\frac12}\\
        &\quad + \mathbb{E} \left [ \int_0^T \int_0^1 \| B_X(s,X(s)+\theta(X^{\varepsilon}(s)-X(s)),\mathbb{E}[X(s)],u(s)) - B_X(s) \|_{L(H)}^{4j} \mathrm{d}\theta \mathrm{d}s \right ]^{\frac12} \mathbb{E} \left [ \int_0^T \| Z^{\varepsilon}(s) \|^{4j} \mathrm{d}s \right ]^{\frac12}\\
        &= o(\varepsilon^{2j}).
    \end{split}
    \end{equation}
    For the fourth term on the right-hand side of \eqref{alpha_decomposition}, we obtain the same asymptotic using similar arguments and Proposition \ref{proposition_asymptotic_expectation_new}. For the fifth term on the right-hand side of \eqref{alpha_decomposition}, we have due to the boundedness of $B_{XX}$, the definition of $\eta^{\varepsilon}$, and the estimates \eqref{asymptotic_Y_e}, \eqref{asymptotic_Z_e}, and \eqref{asymptotic_X_e_X_Y_e}
    \begin{equation}
    \begin{split}
        &\mathbb{E} \left [ \left ( \int_0^t \left \| \int_0^1 \int_0^1 \theta B_{XX}(s,X(s)+\gamma\theta (X^{\varepsilon}(s)-X(s)), \mathbb{E}[X(s)],u^{\varepsilon}(s)) \eta^{\varepsilon}(s) Y^{\varepsilon}(s) \mathrm{d}\gamma\mathrm{d}\theta \right \|^2 \mathrm{d}s \right )^{j} \right ]\\
        &\leq \mathbb{E} \left [ \sup_{s\in [0,T]} \| \eta^{\varepsilon}(s) \|^{4j} \right ]^{\frac12} \mathbb{E} \left [ \sup_{s\in [0,T]} \| Y^{\varepsilon}(s) \|^{4j} \right ]^{\frac12} = o(\varepsilon^{2j}).
    \end{split}
    \end{equation}
    Similarly, we obtain the same asymptotic for the sixth term on the right-hand side of \eqref{alpha_decomposition}. The last term on the right-hand side of \eqref{alpha_decomposition} can be estimated as in \eqref{estimate_B_X_Z}. Altogether, we obtain
    \begin{equation}
        \mathbb{E} \left [ \int_0^t \| \alpha^{\varepsilon}(s) \|^{2j} \mathrm{d}s \right ] \leq C \int_0^t \mathbb{E} \left [ \sup_{r\in [0,s]} \| \eta^{\varepsilon}(r) \|^{2j} \right ] \mathrm{d}s + o(\varepsilon^{2j}).
    \end{equation}
    Using the same arguments, we obtain the same bound for the corresponding term involving $\beta^{\varepsilon}$. Thus, we derive from \eqref{estimate_eta_epsilon_2}
    \begin{equation}
        \mathbb{E} \left [ \sup_{s\in [0,t]} \| \eta^{\varepsilon}(s) \|^{2j} \right ] \leq C \int_0^t \mathbb{E} \left [ \sup_{r\in [0,s]} \| \eta^{\varepsilon}(r) \|^{2j} \right ] \mathrm{d}s + o(\varepsilon^{2j}).
    \end{equation}
    Applying Gr\"onwall's inequality concludes the proof.
\end{proof}

\subsection{Expansion of the Cost Functional}

In this section, using Taylor expansions, we derive an expansion of the cost functional in terms of the first and second variational processes.

\begin{proposition}\label{proposition_cost_functional_expansion}
    Let Assumptions \ref{assumption_b_sigma} and \ref{assumption_l_m} be satisfied. Then, we have for almost every $\tau \in (0,T)$
    \begin{equation}\label{proposition_cost_functional_expansion_equation}
    \begin{split}
        J(u^{\varepsilon}(\cdot)) - J(u(\cdot))&= \mathbb{E} [ M_X(T) ( Y^{\varepsilon}(T) + Z^{\varepsilon}(T)) ] + \frac12 \mathbb{E} [ M_{XX}(T) Y^{\varepsilon}(T)^2 ] + \mathbb{E} [ M_{Y}(T) ] \mathbb{E} [ Y^{\varepsilon}(T) + Z^{\varepsilon}(T) ] \\
        &\quad + \mathbb{E} \left [ \int_0^T L_X(t) (Y^{\varepsilon}(t) + Z^{\varepsilon}(t)) \mathrm{d}t \right ]+ \frac12 \mathbb{E} \left [ \int_0^T L_{XX}(t) Y^{\varepsilon}(t)^2 \mathrm{d}t \right ]\\
        &\quad + \int_0^T \mathbb{E} \left [ L_{Y}(t) \right ] \mathbb{E} [ Y^{\varepsilon}(t) + Z^{\varepsilon}(t) ] \mathrm{d}t + \mathbb{E} \left [ \int_0^T \delta L(t) \mathrm{d}t \right ] + o(\varepsilon).
    \end{split}
    \end{equation}
\end{proposition}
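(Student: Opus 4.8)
The plan is to expand $J(u^{\varepsilon}(\cdot)) - J(u(\cdot))$ by Taylor-expanding the running cost $L$ and terminal cost $M$ around the optimal trajectory, using a \emph{second order} expansion in the state argument $X$ but only a \emph{first order} expansion in the law argument $Y=\mathbb{E}[X]$, and then to substitute the decomposition $X^{\varepsilon}(t)-X(t) = Y^{\varepsilon}(t)+Z^{\varepsilon}(t)+\eta^{\varepsilon}(t)$ and discard every contribution of order $o(\varepsilon)$ using the asymptotics already established. Writing $\Delta X^{\varepsilon}(t):=X^{\varepsilon}(t)-X(t)$ and $\eta^{\varepsilon}(t):=X^{\varepsilon}(t)-X(t)-Y^{\varepsilon}(t)-Z^{\varepsilon}(t)$, I would record the building blocks I intend to use repeatedly: $\mathbb{E}[\sup_t\|\eta^{\varepsilon}(t)\|^{2j}]=o(\varepsilon^{2j})$ from \eqref{asymptotic_X_e_X_Y_e_Z_e}, $\mathbb{E}[\sup_t\|Y^{\varepsilon}(t)\|^{2j}]=\mathcal{O}(\varepsilon^j)$ and $\mathbb{E}[\sup_t\|Z^{\varepsilon}(t)\|^{2j}]=\mathcal{O}(\varepsilon^{2j})$ from \eqref{asymptotic_Y_e} and \eqref{asymptotic_Z_e}, and crucially the refined bound $\sup_t\|\mathbb{E}[Y^{\varepsilon}(t)]\|=\mathcal{O}(\varepsilon)$ from Proposition \ref{proposition_asymptotic_expectation_new}.

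For the terminal cost I would split $M(X^{\varepsilon}(T),\mathbb{E}[X^{\varepsilon}(T)])-M(X(T),\mathbb{E}[X(T)])$ into the law increment $M(X^{\varepsilon}(T),\mathbb{E}[X^{\varepsilon}(T)])-M(X^{\varepsilon}(T),\mathbb{E}[X(T)])$ and the state increment $M(X^{\varepsilon}(T),\mathbb{E}[X(T)])-M(X(T),\mathbb{E}[X(T)])$. A second order expansion of the state increment gives $M_X(T)\Delta X^{\varepsilon}(T)+\tfrac12 M_{XX}(T)(\Delta X^{\varepsilon}(T))^2$ plus a remainder controlled by the modulus of continuity of $M_{XX}$; substituting $\Delta X^{\varepsilon}=Y^{\varepsilon}+Z^{\varepsilon}+\eta^{\varepsilon}$ and using the linear growth of $M_X$ and boundedness of $M_{XX}$ from Lemma \ref{lemma_properties_L_M}, the terms $M_X(T)\eta^{\varepsilon}(T)$, $M_{XX}(T)[(\Delta X^{\varepsilon}(T))^2-(Y^{\varepsilon}(T))^2]$ and the Taylor remainder are all $o(\varepsilon)$ after taking expectation: for instance the middle term factors as $M_{XX}(T)(\Delta X^{\varepsilon}-Y^{\varepsilon},\Delta X^{\varepsilon})+M_{XX}(T)(Y^{\varepsilon},\Delta X^{\varepsilon}-Y^{\varepsilon})$, which is $\mathcal{O}(\varepsilon)\cdot\mathcal{O}(\sqrt{\varepsilon})=o(\varepsilon)$ by Cauchy--Schwarz and \eqref{asymptotic_Y_e}, \eqref{asymptotic_Z_e}, \eqref{asymptotic_X_e_X_Y_e_Z_e}. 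A first order expansion of the law increment gives $\langle\mathbb{E}[M_Y(X^{\varepsilon}(T),\mathbb{E}[X(T)])],\mathbb{E}[\Delta X^{\varepsilon}(T)]\rangle$ plus a remainder equal to a modulus of continuity of $M_Y$, which is $o(1)$ since $\mathbb{E}[\Delta X^{\varepsilon}(T)]\to 0$, times $\mathbb{E}[\Delta X^{\varepsilon}(T)]=\mathcal{O}(\varepsilon)$, hence $o(\varepsilon)$. Replacing $X^{\varepsilon}(T)$ by $X(T)$ in $M_Y$ (continuity plus uniform integrability, an $o(1)$ correction against $\mathcal{O}(\varepsilon)$) and $\mathbb{E}[\Delta X^{\varepsilon}(T)]$ by $\mathbb{E}[Y^{\varepsilon}(T)+Z^{\varepsilon}(T)]$ (the difference $\mathbb{E}[\eta^{\varepsilon}(T)]$ being $o(\varepsilon)$) produces exactly the terminal terms in the statement.

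For the running cost I would first isolate the pure control variation $\delta L(t)=L(t,X(t),\mathbb{E}[X(t)],u^{\varepsilon}(t))-L(t)$ of \eqref{phi-notation}, whose integral appears verbatim in the expansion, leaving the increment $L(t,X^{\varepsilon}(t),\mathbb{E}[X^{\varepsilon}(t)],u^{\varepsilon}(t))-L(t,X(t),\mathbb{E}[X(t)],u^{\varepsilon}(t))$ at frozen control $u^{\varepsilon}(t)$, which I expand precisely as the terminal cost (second order in $X$, first order in $Y$). The only new step is to replace the derivative coefficients evaluated at $u^{\varepsilon}(t)$ by those at $u(t)$; since $u^{\varepsilon}$ and $u$ differ only on $[\tau,\tau+\varepsilon]$ by \eqref{var-u}, this costs integrals over $[\tau,\tau+\varepsilon]$ only, and using the linear growth of $L_X,L_Y$, boundedness of $L_{XX}$, and Cauchy--Schwarz in $(t,\omega)$ together with $\mathbb{E}[\int_{\tau}^{\tau+\varepsilon}\|\Delta X^{\varepsilon}(t)\|^2\,\mathrm{d}t]=\mathcal{O}(\varepsilon^2)$, each such correction is $\mathcal{O}(\varepsilon^{3/2})=o(\varepsilon)$. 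Collecting the surviving terms reproduces the $L_X$, $L_{XX}$ and $L_Y$ contributions and the remaining $o(\varepsilon)$ is uniform over the finitely many discarded pieces.

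The main obstacle throughout is the control of every term carrying the law argument $Y=\mathbb{E}[X]$. The reason a first order expansion in $Y$ suffices, and the reason we avoid the intractable term $\mathbb{E}[Y^{\varepsilon}]^2$ flagged in Remark \ref{remark_second_lions_derivative}, is the refined asymptotic $\sup_t\|\mathbb{E}[Y^{\varepsilon}(t)]\|=\mathcal{O}(\varepsilon)$ of Proposition \ref{proposition_asymptotic_expectation_new}. It enters at two points: it makes $\mathbb{E}[\Delta X^{\varepsilon}]=\mathcal{O}(\varepsilon)$, so the first order Taylor remainders in $Y$ (a vanishing modulus of continuity times $\mathbb{E}[\Delta X^{\varepsilon}]$) are genuinely $o(\varepsilon)$ rather than merely $o(\sqrt{\varepsilon})$; and it already underlies the estimates \eqref{asymptotic_X_e_X_Y_e} and \eqref{asymptotic_X_e_X_Y_e_Z_e} that feed the state expansion. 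Everything else is careful bookkeeping, pairing a bounded or linearly growing coefficient, via Cauchy--Schwarz, against a factor that is $o(\varepsilon)$ in the appropriate $L^{2j}$ norm.
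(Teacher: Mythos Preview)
Your proposal is correct and follows essentially the same strategy as the paper: a second order Taylor expansion in the state argument, a first order expansion in the law argument, with the refined asymptotic $\sup_t\|\mathbb{E}[Y^{\varepsilon}(t)]\|=\mathcal{O}(\varepsilon)$ of Proposition~\ref{proposition_asymptotic_expectation_new} doing the work of killing the law-remainders, and the estimates \eqref{asymptotic_Y_e}--\eqref{asymptotic_X_e_X_Y_e_Z_e} handling the rest. The only cosmetic difference is the order of the telescoping: the paper first replaces $X^{\varepsilon}$ by $X+Y^{\varepsilon}+Z^{\varepsilon}$ (discarding $\eta^{\varepsilon}$ up front), then varies control, law, and state in turn, whereas you isolate $\delta L$ first, expand at frozen $u^{\varepsilon}$, swap $u^{\varepsilon}\to u$ in the coefficients, and only then substitute $\Delta X^{\varepsilon}=Y^{\varepsilon}+Z^{\varepsilon}+\eta^{\varepsilon}$; both orderings lead to the same surviving terms and the same $o(\varepsilon)$ bookkeeping.
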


\begin{proof}
    From the definition of the cost functional, it follows that
    \begin{equation}
    \begin{split}
        J(u^{\varepsilon}(\cdot)) - J(u(\cdot)) &= \mathbb{E} \left [ \int_0^T \left ( L(t,X^{\varepsilon}(t),\mathbb{E}[X^{\varepsilon}(t)],u^{\varepsilon}(t)) - L(t,X(t),\mathbb{E}[X(t)],u(t)) \right ) \mathrm{d}t \right ]\\
        &\quad + \mathbb{E} \left [ M(X^{\varepsilon}(T),\mathbb{E}[X^{\varepsilon}(T)]) - M(X(T),\mathbb{E}[X(T)]) \right ].
    \end{split}
    \end{equation}
    Let us consider the term involving $L$. We have the expansion\begingroup\makeatletter\def\f@size{9}\check@mathfonts
    \begin{equation}\label{difference_L}
    \begin{split}
        &\mathbb{E} \left [ \int_0^T \left ( L(t,X^{\varepsilon}(t),\mathbb{E}[X^{\varepsilon}(t)],u^{\varepsilon}(t)) - L(t,X(t),\mathbb{E}[X(t)],u(t)) \right ) \mathrm{d}t \right ]\\
        &= \mathbb{E} \left [ \int_0^T \left ( L(t,X^{\varepsilon}(t),\mathbb{E}[X^{\varepsilon}(t)],u^{\varepsilon}(t)) - L(t,X(t)+Y^{\varepsilon}(t) + Z^{\varepsilon}(t),\mathbb{E}[X(t)+Y^{\varepsilon}(t)+Z^{\varepsilon}(t)],u^{\varepsilon}(t)) \right ) \mathrm{d}t \right ]\\
        &\quad + \mathbb{E} \bigg [ \int_0^T \Big ( L(t,X(t)+Y^{\varepsilon}(t) + Z^{\varepsilon}(t),\mathbb{E}[X(t)+Y^{\varepsilon}(t)+Z^{\varepsilon}(t)],u^{\varepsilon}(t)) \\
        &\qquad\qquad\qquad\qquad - L(t,X(t)+Y^{\varepsilon}(t) + Z^{\varepsilon}(t),\mathbb{E}[X(t)+Y^{\varepsilon}(t)+Z^{\varepsilon}(t)],u(t)) \Big ) \mathrm{d}t \bigg ]\\
        &\quad + \mathbb{E} \left [ \int_0^T \left ( L(t,X(t)+Y^{\varepsilon}(t) + Z^{\varepsilon}(t),\mathbb{E}[X(t)+Y^{\varepsilon}(t)+Z^{\varepsilon}(t)],u(t)) - L(t,X(t)+Y^{\varepsilon}(t) + Z^{\varepsilon}(t),\mathbb{E}[X(t)],u(t)) \right ) \mathrm{d}t \right ]\\
        &\quad + \mathbb{E} \left [ \int_0^T \left ( L(t,X(t)+Y^{\varepsilon}(t) + Z^{\varepsilon}(t),\mathbb{E}[X(t)],u(t)) - L(t,X(t),\mathbb{E}[X(t)],u(t)) \right ) \mathrm{d}t \right ].
    \end{split}
    \end{equation}\endgroup
    For the first term on the right-hand side, we have by \eqref{asymptotic_X_e_X_Y_e_Z_e} and since $L_X$ and $L_Y$ have linear growth, see Lemma \ref{lemma_properties_L_M}, 
    \begin{equation}
    \begin{split}
        &\mathbb{E} \left [ \int_0^T \left ( L(t,X^{\varepsilon}(t),\mathbb{E}[X^{\varepsilon}(t)],u^{\varepsilon}(t)) - L(t,X(t)+Y^{\varepsilon}(t) + Z^{\varepsilon}(t),\mathbb{E}[X(t)+Y^{\varepsilon}(t)+Z^{\varepsilon}(t)],u^{\varepsilon}(t)) \right ) \mathrm{d}t \right ]\\
        &= \mathbb{E} \left [ \int_0^T \int_0^1 L_X(t,\theta X^{\varepsilon}(t) + (1-\theta)( X(t)+Y^{\varepsilon}(t) + Z^{\varepsilon}(t)),\mathbb{E}[X^{\varepsilon}(t)],u^{\varepsilon}(t)) (X^{\varepsilon}(t) - X(t)-Y^{\varepsilon}(t) - Z^{\varepsilon}(t)) \mathrm{d}\theta \mathrm{d}t \right ]\\
        &\quad + \mathbb{E} \Bigg [ \int_0^T \int_0^1 L_{Y}(t,X(t) + Y^{\varepsilon}(t) + Z^{\varepsilon}(t), \mathbb{E}[\theta X^{\varepsilon}(t) + (1-\theta) (X(t)+Y^{\varepsilon}(t)+Z^{\varepsilon}(t))], u^{\varepsilon}(t))\\
        &\qquad\qquad\qquad\qquad\qquad\qquad \mathbb{E} \left [ X^{\varepsilon}(t) - X(t)-Y^{\varepsilon}(t)-Z^{\varepsilon}(t) \right ] \mathrm{d}\theta \mathrm{d}t \Bigg ]\\
        &= o(\varepsilon).
    \end{split}
    \end{equation}
    For the second term on the right-hand side of \eqref{difference_L}, we have
    \begin{equation}\label{difference_L_2}
    \begin{split}
        &\mathbb{E} \bigg [ \int_0^T \Big ( L(t,X(t)+Y^{\varepsilon}(t) + Z^{\varepsilon}(t),\mathbb{E}[X(t)+Y^{\varepsilon}(t)+Z^{\varepsilon}(t)],u^{\varepsilon}(t))\\
        &\qquad\qquad - L(t,X(t)+Y^{\varepsilon}(t) + Z^{\varepsilon}(t),\mathbb{E}[X(t)+Y^{\varepsilon}(t)+Z^{\varepsilon}(t)],u(t)) \Big ) \mathrm{d}t \bigg ]\\
        &= \mathbb{E} \left [ \int_0^T L(t,X(t),\mathbb{E}[X(t)],u^{\varepsilon}(t))  - L(t,X(t),\mathbb{E}[X(t)],u(t)) \mathrm{d}t \right ]\\
        &\quad + \mathbb{E} \left [ \int_0^T L(t,X(t)+Y^{\varepsilon}(t) + Z^{\varepsilon}(t),\mathbb{E}[X(t)+Y^{\varepsilon}(t)+Z^{\varepsilon}(t)],u^{\varepsilon}(t)) - L(t,X(t),\mathbb{E}[X(t)],u^{\varepsilon}(t)) \mathrm{d}t \right ]\\
        &\quad + \mathbb{E} \left [ \int_0^T L(t,X(t),\mathbb{E}[X(t)],u(t)) - L(t,X(t)+Y^{\varepsilon}(t) + Z^{\varepsilon}(t),\mathbb{E}[X(t)+Y^{\varepsilon}(t)+Z^{\varepsilon}(t)],u(t)) \mathrm{d}t \right ].
    \end{split}
    \end{equation}
    The first term on the right-hand side of this equation yields the term involving $\delta L$ in equation \eqref{proposition_cost_functional_expansion_equation}. For the second and third line on the right-hand side of this equation, we note that
    \begin{equation}\label{decomposition_L_intermediate_step}
    \begin{split}
        &\mathbb{E} \left [ \int_0^T \left ( L(t,X(t)+Y^{\varepsilon}(t) + Z^{\varepsilon}(t),\mathbb{E}[X(t)+Y^{\varepsilon}(t)+Z^{\varepsilon}(t)],u^{\varepsilon}(t)) - L(t,X(t),\mathbb{E}[X(t)],u^{\varepsilon}(t)) \right ) \mathrm{d}t \right ]\\
        &\quad + \mathbb{E} \left [ \int_0^T \left ( L(t,X(t),\mathbb{E}[X(t)],u(t)) - L(t,X(t)+Y^{\varepsilon}(t) + Z^{\varepsilon}(t),\mathbb{E}[X(t)+Y^{\varepsilon}(t)+Z^{\varepsilon}(t)],u(t)) \right ) \mathrm{d}t \right ]\\
        &= \mathbb{E} \left [ \int_0^T \int_0^1 L_X(t,X(t) + \theta (Y^{\varepsilon}(t) + Z^{\varepsilon}(t)),\mathbb{E}[X(t)+Y^{\varepsilon}(t)+Z^{\varepsilon}(t)],u^{\varepsilon}(t)) (Y^{\varepsilon}(t) + Z^{\varepsilon}(t)) \mathrm{d}\theta \mathrm{d}t \right ]\\
        &\quad - \mathbb{E} \left [ \int_0^T \int_0^1 L_X(t,X(t) + \theta (Y^{\varepsilon}(t) + Z^{\varepsilon}(t)),\mathbb{E}[X(t)+Y^{\varepsilon}(t)+Z^{\varepsilon}(t)],u(t)) (Y^{\varepsilon}(t) + Z^{\varepsilon}(t)) \mathrm{d}\theta \mathrm{d}t \right ]\\
        &\quad + \mathbb{E} \left [ \int_0^T \int_0^1 L_{Y}(t,X(t),\mathbb{E}[X(t)+\theta (Y^{\varepsilon}(t)+Z^{\varepsilon}(t))],u^{\varepsilon}(t)) \mathbb{E} \left [ Y^{\varepsilon}(t)+Z^{\varepsilon}(t) \right ] \mathrm{d}\theta \mathrm{d}t \right ]\\
        &\quad - \mathbb{E} \left [ \int_0^T \int_0^1 L_{Y}(t,X(t),\mathbb{E}[X(t)+\theta (Y^{\varepsilon}(t)+Z^{\varepsilon}(t))],u(t)) \mathbb{E} \left [ Y^{\varepsilon}(t)+Z^{\varepsilon}(t) \right ] \mathrm{d}\theta \mathrm{d}t \right ].
    \end{split}
    \end{equation}
    For the first and second line on the right-hand side of this equation, we have
    \begin{equation}
    \begin{split}
        &\Bigg | \mathbb{E} \left [ \int_0^T \int_0^1 L_X(t,X(t) + \theta (Y^{\varepsilon}(t) + Z^{\varepsilon}(t)),\mathbb{E}[X(t)+Y^{\varepsilon}(t)+Z^{\varepsilon}(t)],u^{\varepsilon}(t)) (Y^{\varepsilon}(t) + Z^{\varepsilon}(t)) \mathrm{d}\theta \mathrm{d}t \right ]\\
        & - \mathbb{E} \left [ \int_0^T \int_0^1 L_X(t,X(t) + \theta (Y^{\varepsilon}(t) + Z^{\varepsilon}(t)),\mathbb{E}[X(t)+Y^{\varepsilon}(t)+Z^{\varepsilon}(t)],u(t)) (Y^{\varepsilon}(t) + Z^{\varepsilon}(t)) \mathrm{d}\theta \mathrm{d}t \right ] \Bigg |\\
        &\leq \mathbb{E} \Bigg [ \bigg ( \int_0^T \int_0^1 \Big \| L_X(t,X(t) + \theta (Y^{\varepsilon}(t) + Z^{\varepsilon}(t)),\mathbb{E}[X(t)+Y^{\varepsilon}(t)+Z^{\varepsilon}(t)],u^{\varepsilon}(t)) \\
        &\qquad\qquad\qquad\qquad - L_X(t,X(t) + \theta (Y^{\varepsilon}(t) + Z^{\varepsilon}(t)),\mathbb{E}[X(t)+Y^{\varepsilon}(t)+Z^{\varepsilon}(t)],u(t))  \Big \| \mathrm{d}\theta \mathrm{d}t \bigg )^2 \Bigg ]^{\frac12} \\
        &\quad \times \mathbb{E} \left [ \sup_{t\in [0,T]} \| Y^{\varepsilon}(t) + Z^{\varepsilon}(t) \|^2 \right ]^{\frac12}
    \end{split}
    \end{equation}
    Note that the first integral on the right-hand side is actually an integral over $[\tau,\tau+\varepsilon]$. Thus, using Jensen's inequality and the linear growth of $L_X$, we have
    \begin{equation}
    \begin{split}
        &\mathbb{E} \Bigg [ \bigg ( \frac{1}{\varepsilon} \int_{\tau}^{\tau+\varepsilon} \int_0^1 \Big \| L_X(t,X(t) + \theta (Y^{\varepsilon}(t) + Z^{\varepsilon}(t)),\mathbb{E}[X(t)+Y^{\varepsilon}(t)+Z^{\varepsilon}(t)],u^{\varepsilon}(t)) \\
        &\qquad\qquad\qquad - L_X(t,X(t) + \theta (Y^{\varepsilon}(t) + Z^{\varepsilon}(t)),\mathbb{E}[X(t)+Y^{\varepsilon}(t)+Z^{\varepsilon}(t)],u(t))  \Big \| \mathrm{d}\theta \mathrm{d}t \bigg )^2 \Bigg ]^{\frac12}\\
        &\leq \mathbb{E} \Bigg [ \frac{1}{\varepsilon} \int_{\tau}^{\tau+\varepsilon} \int_0^1 \Big \| L_X(t,X(t) + \theta (Y^{\varepsilon}(t) + Z^{\varepsilon}(t)),\mathbb{E}[X(t)+Y^{\varepsilon}(t)+Z^{\varepsilon}(t)],u^{\varepsilon}(t)) \\
        &\qquad\qquad\qquad - L_X(t,X(t) + \theta (Y^{\varepsilon}(t) + Z^{\varepsilon}(t)),\mathbb{E}[X(t)+Y^{\varepsilon}(t)+Z^{\varepsilon}(t)],u(t))  \Big \|^2 \mathrm{d}\theta \mathrm{d}t \Bigg ]^{\frac12}\\
        &\leq \left ( \frac{C}{\varepsilon} \int_{\tau}^{\tau+\varepsilon} \mathbb{E} \left [ 1+ \| X(t) \|^2 + \| Y^{\varepsilon}(t) \|^2 + \| Z^{\varepsilon}(t) \|^2 + \| u^{\varepsilon}(t) \|_U^2 + \| u(t) \|_U^2 \right ] \mathrm{d}t \right )^{\frac12}.
    \end{split}
    \end{equation}
    The right-hand side is finite for almost all $\tau$ in the limit $\varepsilon \to 0$. Using similar arguments for the third and fourth line on the right-hand side of equation \eqref{decomposition_L_intermediate_step} and using \eqref{asymptotic_Y_e} and \eqref{asymptotic_Z_e} shows that
    \begin{equation}
    \begin{split}
        &\mathbb{E} \left [ \int_0^T L(t,X(t)+Y^{\varepsilon}(t) + Z^{\varepsilon}(t),\mathbb{E}[X(t)+Y^{\varepsilon}(t)+Z^{\varepsilon}(t)],u^{\varepsilon}(t)) - L(t,X(t),\mathbb{E}[X(t)],u^{\varepsilon}(t)) \mathrm{d}t \right ]\\
        & + \mathbb{E} \left [ \int_0^T L(t,X(t),\mathbb{E}[X(t)],u(t)) - L(t,X(t)+Y^{\varepsilon}(t) + Z^{\varepsilon}(t),\mathbb{E}[X(t)+Y^{\varepsilon}(t)+Z^{\varepsilon}(t)],u(t)) \mathrm{d}t \right ] = o(\varepsilon).
    \end{split}
    \end{equation}
    For the third term in \eqref{difference_L}, due to Proposition \ref{proposition_asymptotic_expectation_new} we have
    \begin{equation}
    \begin{split}
        &\mathbb{E} \left [ \int_0^T L(t,X(t)+Y^{\varepsilon}(t) + Z^{\varepsilon}(t),\mathbb{E}[X(t)+Y^{\varepsilon}(t)+Z^{\varepsilon}(t)],u(t)) - L(t,X(t)+Y^{\varepsilon}(t) + Z^{\varepsilon}(t),\mathbb{E}[X(t)],u(t)) \mathrm{d}t \right ]\\
        &= \mathbb{E} \left [ \int_0^T L_Y(t,X(t),\mathbb{E}[X(t)],u(t)) \mathbb{E} [ Y^{\varepsilon}(t) + Z^{\varepsilon}(t)] \mathrm{d}t \right ]\\
        &\quad +\mathbb{E} \bigg [ \int_0^T \int_0^1 \bigg ( L_Y(t,X(t)+Y^{\varepsilon}(t)+Z^{\varepsilon}(t), \mathbb{E} [ X(t) + \theta (Y^{\varepsilon}(t) + Z^{\varepsilon}(t)) ], u(t))\\
        &\qquad\qquad\qquad\qquad\qquad - L_Y(t,X(t),\mathbb{E}[X(t)],u(t)) \bigg ) \mathbb{E} [ Y^{\varepsilon}(t) + Z^{\varepsilon}(t)] \mathrm{d}\theta \mathrm{d}t \bigg ]
    \end{split}
    \end{equation}
    The first term on the right-hand side yields the corresponding term in \eqref{proposition_cost_functional_expansion_equation}. For the second and third line on the right-hand side of this equation, we have
    \begin{equation}
    \begin{split}
        &\Bigg | \mathbb{E} \bigg [ \int_0^T \int_0^1 \bigg ( L_Y(t,X(t)+Y^{\varepsilon}(t)+Z^{\varepsilon}(t), \mathbb{E} [ X(t) + \theta (Y^{\varepsilon}(t) + Z^{\varepsilon}(t)) ], u(t))\\
        &\qquad\qquad\qquad\qquad\qquad - L_Y(t,X(t),\mathbb{E}[X(t)],u(t)) \bigg ) \mathbb{E} [ Y^{\varepsilon}(t) + Z^{\varepsilon}(t)] \mathrm{d}\theta \mathrm{d}t \bigg ] \Bigg |\\
        &\leq \mathbb{E} \bigg [ \int_0^T \int_0^1 \bigg \| L_Y(t,X(t)+Y^{\varepsilon}(t)+Z^{\varepsilon}(t), \mathbb{E} [ X(t) + \theta (Y^{\varepsilon}(t) + Z^{\varepsilon}(t)) ], u(t))\\
        &\qquad\qquad\qquad\qquad\qquad - L_Y(t,X(t),\mathbb{E}[X(t)],u(t)) \bigg \| \mathrm{d}\theta \mathrm{d}t \bigg ] \sup_{t\in [0,T]} \| \mathbb{E} [ Y^{\varepsilon}(t) + Z^{\varepsilon}(t)] \|,
    \end{split}
    \end{equation}
    which, due to the linear growth and continuity of $L_Y$, Lebesgue's dominated convergence theorem, equation \eqref{asymptotic_Z_e} and Proposition \ref{proposition_asymptotic_expectation_new}, is of order $o(\varepsilon)$.
    
    Finally, for the fourth term in \eqref{difference_L}, similar arguments yield
    \begin{equation}
    \begin{split}
        &\mathbb{E} \left [ \int_0^T L(t,X(t)+Y^{\varepsilon}(t) + Z^{\varepsilon}(t),\mathbb{E}[X(t)],u(t)) - L(t,X(t),\mathbb{E}[X(t)],u(t)) \mathrm{d}t \right ]\\
        &= \mathbb{E} \left [ \int_0^T \left ( L_X(t,X(t),\mathbb{E}[X(t)],u(t)) (Y^{\varepsilon}(t) + Z^{\varepsilon}(t)) + \frac12 L_{XX}(t,X(t),\mathbb{E}[X(t)],u(t)) Y^{\varepsilon}(t)^2 \right ) \mathrm{d}t \right ] + o(\varepsilon),
        %&\quad + \mathbb{E} \left [ \int_0^T \int_0^1 \int_0^1 \theta_1 L_{XX}(t,X(t)+\theta_1\theta_2 (Y^{\varepsilon}(t) + Z^{\varepsilon}(t)) ,\mathcal{L}(X_0(t)),u(t)) (Y^{\varepsilon}(t) + Z^{\varepsilon}(t))(Y^{\varepsilon}(t) + Z^{\varepsilon}(t)) \mathrm{d}\theta_1 \mathrm{d}\theta_2 \mathrm{d}t \right ]\\
        %&= \mathbb{E} \left [ \int_0^T L_X(t,X(t),\mathcal{L}(X_0(t)),u(t)) (Y^{\varepsilon}(t) + Z^{\varepsilon}(t)) \mathrm{d}t \right ] + o(\varepsilon).
    \end{split}
    \end{equation}
    which concludes the discussion of the terms involving $L$. The terms involving $M$ can be handled similarly.
\end{proof}

\section{Adjoint Equations}\label{section_adjoint_equations}

\subsection{First Order Adjoint Equation}

We introduce the first order adjoint equation
\begin{equation}\label{first_order_adjoint}
\begin{cases}
    - \mathrm{d} p(t) = \Big [ A^* p(t) + B_X^*(t) p(t) + \mathbb{E} [ B_{Y}^*(t) p(t) ] - L_X(t) - \mathbb{E} [ L_{Y}(t) ] + \Sigma^*_X(t) q(t) + \mathbb{E} [ \Sigma^*_{Y}(t) q(t) ] \Big ] \mathrm{d}t\\
    \qquad\qquad\qquad - q(t) \mathrm{d}W(t)\\
    p(T) = - M_X(T) - \mathbb{E} [ M_{Y}(T)].
\end{cases}
\end{equation}

\begin{theorem}
    Let Assumptions \ref{assumption_b_sigma} and \ref{assumption_l_m} be satisfied. Then, equation \eqref{first_order_adjoint} admits a unique mild solution, i.e., there is a unique pair of processes $(p,q) \in L_{\mathcal{F}}^2(\Omega;C([0,T];H)) \times L_{\mathcal{F}}^2(\Omega \times [0,T];L(\mathbb{R}^w,H))$ such that for every $t\in [0,T]$
    \begin{equation}
    \begin{split}
        p(t) &= - e^{(T-t)A^*} M_X(T) - e^{(T-t)A^*} \mathbb{E} [ M_{Y}(T)] + \int_t^T e^{(s-t)A^*} B_X^*(s) p(s) \mathrm{d}s\\
        &\quad + \int_t^T e^{(s-t)A^*} \mathbb{E} [ B_{Y}^*(s) p(s) ] \mathrm{d}s - \int_t^T e^{(s-t)A^*} L_X(s) \mathrm{d}s - \int_t^T e^{(s-t)A^*} \mathbb{E} [ L_{Y}(s) ] \mathrm{d}s\\
        &\quad + \int_t^T e^{(s-t)A^*} \Sigma^*_X(s) q(s) \mathrm{d}s + \int_t^T e^{(s-t)A^*} \mathbb{E} [ \Sigma^*_{Y}(s) q(s) ] \mathrm{d}s - \int_t^T e^{(s-t)A^*} q(s) \mathrm{d}W(s)
    \end{split}
    \end{equation}
    $\mathbb{P}$-almost surely.
\end{theorem}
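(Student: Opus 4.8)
The plan is to recognize equation \eqref{first_order_adjoint} as a special instance of the abstract McKean--Vlasov BSDE whose well-posedness is established in Theorem \ref{BSDE_existence_of_solution}, and then to verify the hypotheses of that theorem. First I would set the terminal datum $\xi := - M_X(T) - \mathbb{E}[M_Y(T)]$ and the driver
\[
F(t,p,q) := B_X^*(t) p + \mathbb{E}[B_Y^*(t) p] + \Sigma_X^*(t) q + \mathbb{E}[\Sigma_Y^*(t) q] - L_X(t) - \mathbb{E}[L_Y(t)],
\]
so that \eqref{first_order_adjoint} takes the abstract form $-\mathrm{d}p(t) = [A^* p(t) + F(t,p(t),q(t))]\,\mathrm{d}t - q(t)\,\mathrm{d}W(t)$ with $p(T) = \xi$. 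Since $A$ generates a $C_0$-semigroup of pseudo-contractions by Lemma \ref{lemma_A_pseudo_contraction}, on the Hilbert space $H$ its adjoint $A^*$ generates the adjoint semigroup, which is again a $C_0$-semigroup of pseudo-contractions; hence the structural assumption on the generator required by Theorem \ref{BSDE_existence_of_solution} is met.

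Next I would verify the Lipschitz and integrability conditions. The driver $F$ is affine in $(p,q)$ and in their expectations, with operator coefficients $B_X^*(t)$, $B_Y^*(t)$, $\Sigma_X^*(t)$, $\Sigma_Y^*(t)$ that are uniformly bounded by Lemma \ref{lemma_B_Sigma_Lipschitz}; consequently $F$ is globally Lipschitz in the arguments $(p,q,\mathbb{E}[p],\mathbb{E}[q])$ with a Lipschitz constant independent of $(t,\omega)$. It then remains to check that the forcing term and terminal condition lie in the correct spaces. By the linear growth of $L_X$ and $L_Y$ recorded in Lemma \ref{lemma_properties_L_M}, the state estimate \eqref{estimate_state} together with the square-integrability of the admissible control yields $L_X(\cdot), L_Y(\cdot) \in L^2_{\mathcal{F}}(\Omega\times[0,T];H)$, so the free term $-L_X(t) - \mathbb{E}[L_Y(t)]$ belongs to $L^2_{\mathcal{F}}(\Omega\times[0,T];H)$. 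Analogously, the linear growth of $M_X$ and $M_Y$ ensures $\xi \in L^2(\Omega;H)$, and $\xi$ is plainly $\mathcal{F}_T$-measurable.

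With these hypotheses in hand, Theorem \ref{BSDE_existence_of_solution} delivers a unique mild solution $(p,q) \in L^2_{\mathcal{F}}(\Omega;C([0,T];H)) \times L^2_{\mathcal{F}}(\Omega\times[0,T];L(\mathbb{R}^w,H))$ satisfying the asserted mild formulation. The one point deserving care is to confirm that the linear mean-field dependence on $q$ appearing in $\mathbb{E}[\Sigma_Y^*(t) q]$ falls within the scope of the abstract theorem; this holds precisely because $\Sigma_Y^*(t)$ is a uniformly bounded operator, so this term is merely a bounded linear perturbation and does not disrupt the contraction argument underlying Theorem \ref{BSDE_existence_of_solution}. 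I expect this matching of the problem to the abstract framework, rather than any genuinely new estimate, to be the main---and comparatively modest---obstacle, the rest being a direct specialization of the cited result.
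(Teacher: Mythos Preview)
Your proposal is correct and matches the paper's approach exactly: the paper's proof consists of the single line ``See Theorem \ref{BSDE_existence_of_solution}'', and you have spelled out precisely the verification of hypotheses (semigroup generation for $A^*$ via Lemma \ref{lemma_A_pseudo_contraction}, Lipschitz bounds from Lemma \ref{lemma_B_Sigma_Lipschitz}, integrability from Lemma \ref{lemma_properties_L_M} and \eqref{estimate_state}) that this reference implicitly invokes. Your closing remark about the mean-field term $\mathbb{E}[\Sigma_Y^*(t)q]$ correctly flags the only nontrivial point, namely that the random coefficient $\Sigma_Y^*(t)$ is uniformly bounded so that the required Lipschitz estimate in the law variable still holds and the contraction in Theorem \ref{BSDE_existence_of_solution} goes through.
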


\begin{proof}
    See Theorem \ref{BSDE_existence_of_solution}.
\end{proof}

We have the following duality relations.

\begin{proposition}\label{proposition-duality}
    Let Assumptions \ref{assumption_b_sigma} and \ref{assumption_l_m} be satisfied. Then, it holds
    \begin{equation}\label{adjoint_state_property_Y}
    \begin{split}
        \mathbb{E} [ \langle p(T), Y^{\varepsilon}(T)\rangle ] &= - \mathbb{E} \left [ \langle M_X(T), Y^{\varepsilon}(T) \rangle \right ] - \langle \mathbb{E} [ M_{Y}(T) ], \mathbb{E} [ Y^{\varepsilon}(T) ] \rangle \\
        &= \mathbb{E} \left [ \int_0^T \left ( L_X(t) Y^{\varepsilon}(t) + \mathbb{E} [ L_{Y}(t) ] Y^{\varepsilon}(t) + \langle p(t), \delta B(t) \rangle + \langle q(t), \delta \Sigma(t) \rangle_{L_2(\mathbb{R}^w,H)} \right ) \mathrm{d}t \right ]
    \end{split}
    \end{equation}
    and
    \begin{equation}\label{adjoint_state_property_Z}
    \begin{split}
        \mathbb{E} [ \langle p(T), Z^{\varepsilon}(T) \rangle ] &= - \mathbb{E} \left [ \langle M_X(T), Z^{\varepsilon}(T) \rangle \right ] - \langle \mathbb{E} [ M_{Y}(T) ], \mathbb{E} [ Z^{\varepsilon}(T) ] \rangle \\
        &= \mathbb{E} \Bigg [ \int_0^T L_X(t) Z^{\varepsilon}(t) + \mathbb{E} [ L_{Y}(t) ] Z^{\varepsilon}(t) + \left \langle p(t), \delta B_X(t) Y^{\varepsilon}(t) + \frac12 B_{XX}(t) Y^{\varepsilon}(t)^2 \right \rangle\\
        &\qquad\qquad + \left \langle q(t), \delta \Sigma_X(t) Y^{\varepsilon}(t) + \frac12 \Sigma_{XX}(t) Y^{\varepsilon}(t)^2 \right \rangle_{L_2(\mathbb{R}^w,H)} \mathrm{d}t \Bigg ].
    \end{split}
    \end{equation}
\end{proposition}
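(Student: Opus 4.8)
The plan is to prove both identities by applying It\^o's product formula to $\langle p(t), Y^{\varepsilon}(t)\rangle$ and $\langle p(t), Z^{\varepsilon}(t)\rangle$, following the same Yosida-regularization scheme already employed in the proof of Proposition \ref{proposition_asymptotic_expectation_new}. The first equality in \eqref{adjoint_state_property_Y} is immediate: substituting the terminal condition $p(T) = -M_X(T) - \mathbb{E}[M_{Y}(T)]$ and using that $\mathbb{E}[M_{Y}(T)]$ is deterministic gives
\[
    \mathbb{E}[\langle p(T), Y^{\varepsilon}(T)\rangle] = -\mathbb{E}[\langle M_X(T), Y^{\varepsilon}(T)\rangle] - \langle \mathbb{E}[M_{Y}(T)], \mathbb{E}[Y^{\varepsilon}(T)]\rangle .
\]

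For the second equality, I would let $A_{\lambda}$ be the Yosida approximation of $A$ and denote by $Y^{\varepsilon,\lambda}$ and $(p^{\lambda}, q^{\lambda})$ the solutions of \eqref{first_variational_equation} and \eqref{first_order_adjoint} with $A$ replaced by $A_{\lambda}$. Since $A_{\lambda}$ is bounded, It\^o's formula applies to $\langle p^{\lambda}(t), Y^{\varepsilon,\lambda}(t)\rangle$; as $Y^{\varepsilon,\lambda}(0)=0$, taking expectations kills the stochastic integrals and expresses $\mathbb{E}[\langle p^{\lambda}(T), Y^{\varepsilon,\lambda}(T)\rangle]$ as the expectation of a time integral of drift and quadratic-covariation terms. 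The crucial step is the cancellation of the homogeneous contributions: the $A_{\lambda}$ terms cancel by $\langle A_{\lambda}^* p, Y\rangle = \langle p, A_{\lambda} Y\rangle$; the $B_X$ terms cancel since $\langle B_X^*(t) p, Y^{\varepsilon}\rangle = \langle p, B_X(t) Y^{\varepsilon}\rangle$; and the $\Sigma_X$ contributions from the adjoint drift and the It\^o correction cancel via the trace identity $\langle \Sigma_X(t) Y^{\varepsilon}, q\rangle_{L_2(\mathbb{R}^w,H)} = \langle Y^{\varepsilon}, \Sigma_X^*(t) q\rangle$ already used in the previous proposition. The mean-field terms need more care: since $\mathbb{E}[B_{Y}^*(t) p]$ and $\mathbb{E}[Y^{\varepsilon}]$ are deterministic, the outer expectation yields $\mathbb{E}[\langle \mathbb{E}[B_{Y}^*(t) p], Y^{\varepsilon}\rangle] = \langle \mathbb{E}[B_{Y}^*(t) p], \mathbb{E}[Y^{\varepsilon}]\rangle = \mathbb{E}[\langle p, B_{Y}(t)\mathbb{E}[Y^{\varepsilon}]\rangle]$, so the $B_{Y}$ terms cancel, and analogously for the $\Sigma_{Y}$ terms. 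What survives is exactly $\langle L_X(t), Y^{\varepsilon}\rangle + \langle \mathbb{E}[L_{Y}(t)], Y^{\varepsilon}\rangle + \langle p^{\lambda}, \delta B(t)\rangle + \langle q^{\lambda}, \delta\Sigma(t)\rangle_{L_2(\mathbb{R}^w,H)}$. Passing to the limit $\lambda\to 0$ via Theorem \ref{theorem_general_bsde_yosida}, as in Proposition \ref{proposition_asymptotic_expectation_new}, yields the second equality in \eqref{adjoint_state_property_Y}.

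The argument for \eqref{adjoint_state_property_Z} is essentially verbatim, now applying It\^o's formula to $\langle p(t), Z^{\varepsilon}(t)\rangle$ with the second-order variational equation \eqref{second_variational_equation}. Since $Z^{\varepsilon}$ solves a linear equation with the same homogeneous structure as $Y^{\varepsilon}$, the $A$, $B_X$, $B_{Y}$, $\Sigma_X$, and $\Sigma_{Y}$ terms cancel exactly as above. The only difference lies in the inhomogeneous part: in place of $\delta B$ and $\delta\Sigma$, the source terms of the $Z^{\varepsilon}$-equation are $\delta B_X(t) Y^{\varepsilon}(t) + \tfrac12 B_{XX}(t) Y^{\varepsilon}(t)^2$ in the drift and $\delta \Sigma_X(t) Y^{\varepsilon}(t) + \tfrac12 \Sigma_{XX}(t) Y^{\varepsilon}(t)^2$ in the diffusion, producing precisely the pairings $\langle p(t), \delta B_X(t) Y^{\varepsilon}(t) + \tfrac12 B_{XX}(t) Y^{\varepsilon}(t)^2\rangle$ and $\langle q(t), \delta\Sigma_X(t) Y^{\varepsilon}(t) + \tfrac12 \Sigma_{XX}(t) Y^{\varepsilon}(t)^2\rangle_{L_2(\mathbb{R}^w,H)}$ on the right-hand side of \eqref{adjoint_state_property_Z}.

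The main obstacle is the rigorous justification of It\^o's formula in the presence of the unbounded generator $A$ together with the limit $\lambda\to 0$; this is handled exactly as in Proposition \ref{proposition_asymptotic_expectation_new} through the Yosida approximation and Theorem \ref{theorem_general_bsde_yosida}. It relies on the integrability of $(p,q)$ supplied by Theorem \ref{BSDE_existence_of_solution}, together with the moment bounds \eqref{asymptotic_Y_e} and \eqref{asymptotic_Z_e} to ensure the source terms are integrable and the convergence of the cross terms is legitimate.
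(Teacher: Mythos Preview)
Your proposal is correct and follows essentially the same approach as the paper: Yosida approximation of $A$, It\^o's formula applied to $\langle p^{\lambda}(t), Y^{\varepsilon,\lambda}(t)\rangle$ (respectively $\langle p^{\lambda}(t), Z^{\varepsilon,\lambda}(t)\rangle$), cancellation of the homogeneous and mean-field terms, and passage to the limit via Theorem~\ref{theorem_general_bsde_yosida} together with the corresponding convergence for the forward approximants. Your explanation of the mean-field cancellations is in fact more explicit than the paper's, which simply writes out the It\^o expansion and records the result.
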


\begin{proof}
    Let $A_{\lambda}$ be the Yosida approximation of $A$. Let $Y^{\varepsilon,\lambda}$ and $(p^{\lambda},q^{\lambda})$ be the solutions of equations \eqref{first_variational_equation} and \eqref{first_order_adjoint} with $A$ and $A^*$ replaced by $A_{\lambda}$ and $A_{\lambda}^*$, respectively. By It\^o's formula, we have
    \begin{equation}
    \begin{split}
        \mathbb{E} \left [ \langle p^{\lambda}(T), Y^{\varepsilon,\lambda}(T) \rangle \right ] %&= \mathbb{E} \left [ \int_0^T \langle p^{\lambda}(t), \mathrm{d}Y^{\varepsilon,\lambda}(t) \rangle + \int_0^T \langle Y^{\varepsilon,\lambda}(t), \mathrm{d}p^{\lambda}(t) \rangle + \int_0^T \mathrm{d}\langle p^{\lambda},Y^{\varepsilon,\lambda} \rangle_t \right ]\\
        &= \mathbb{E} \left [ \int_0^T \left \langle p^{\lambda}(t), A_{\lambda}Y^{\varepsilon,\lambda}(t) + B_X(t) Y^{\varepsilon,\lambda}(t) + B_{Y}(t) \mathbb{E} [ Y^{\varepsilon,\lambda}(t) ] + \delta B(t) \right \rangle \mathrm{d}t \right ]\\
        &\quad - \mathbb{E} \Bigg [ \int_0^T \Big \langle Y^{\varepsilon,\lambda}(t) , ( A_{\lambda}^* p^{\lambda}(t) + B_X^*(t) p^{\lambda}(t) + \mathbb{E} [ B_{Y}^* p^{\lambda}(t) ] - L_X(t) - \mathbb{E} [ L_{Y}(t) ] \\
        &\qquad\qquad\qquad + ( \Sigma^*_X(t) q^{\lambda}(t) + \mathbb{E} [ \Sigma^*_{Y}(t) q^{\lambda}(t) ] \Big \rangle \mathrm{d}t \Bigg ]\\
        &\quad + \mathbb{E} \left [ \int_0^T \left \langle q^{\lambda}(t), \Sigma_X(t) Y^{\varepsilon,\lambda}(t) + \Sigma_{Y}(t) \mathbb{E} [ Y^{\varepsilon,\lambda}(t) ] + \delta \Sigma(t) \right \rangle_{L_2(\mathbb{R}^w,H)} \mathrm{d}t\right ]\\
        &= \mathbb{E} \left [ \int_0^T \left ( L_X(t) Y^{\varepsilon,\lambda}(t) + \mathbb{E} [ L_{Y}(t) ] Y^{\varepsilon,\lambda}(t) + \langle p^{\lambda}(t), \delta B(t) \rangle + \langle q^{\lambda}(t), \delta \Sigma(t) \rangle_{L_2(\mathbb{R}^w,H)} \right ) \mathrm{d}t \right ].
    \end{split}
    \end{equation}
    Taking the limit $\lambda\to\infty$, due to \cite[Proposition 2.10]{cosso_gozzi_kharroubi_pham_rosestolato_2023} and Theorem \ref{theorem_general_bsde_yosida}, we obtain \eqref{adjoint_state_property_Y}. Equation \eqref{adjoint_state_property_Z} follows along the same lines.
\end{proof}

\subsection{Tensor Product of First Order Variational Process}

Now, let us derive an equation for the operator-valued process associated with the tensor-valued process $Y^{\varepsilon,\lambda}(t) \otimes Y^{\varepsilon,\lambda}(t) \in H \otimes H$, $t\in [0,T]$. We recall that $H\otimes H$ is isometrically isomorph to the space of Hilbert--Schmidt operators $L_2(H)$, i.e., for $f,g\in H$, the tensor $f\otimes g\in H\otimes H$ can be identified with a Hilbert--Schmidt operator from $H$ to $H$ which we denote by $\mathcal{T}_{f\otimes g}$. The identification is given by $\mathcal{T}_{f\otimes g}(h) := \langle f,h\rangle g$, $h\in H$, see e.g. \cite[Proposition 2.6.9]{kadison_ringrose_1983}.

\begin{theorem}\label{theorem_tensor_product}
    Let Assumption \ref{assumption_b_sigma} be satisfied. For $t\in [0,T]$, let $\mathcal{Y}^{\varepsilon}(t) : H \to H$ be the operator associated with $Y^{\varepsilon}(t) \otimes Y^{\varepsilon}(t)$. Then, $\mathcal{Y}^{\varepsilon} \in L^2_{\mathcal{F}}(\Omega; C([0,T];L_2(H)))$ is a mild solution in the sense of Definition \ref{definition_mild_solution_forward_equation} of the equation
    \begin{equation}\label{third_variational_equation}
    \begin{cases}
        \mathrm{d}\mathcal{Y}^{\varepsilon}(t) = \left [ (A+ B_X(t)) \mathcal{Y}^{\varepsilon}(t) + \mathcal{Y}^{\varepsilon}(t) (A^*+ B^*_X(t)) + \text{Tr} ( \Sigma_X(t) \mathcal{Y}^{\varepsilon}(t) \Sigma^*_X(t) ) + \mathcal{T}_{\Phi^{\varepsilon}}(t) \right ] \mathrm{d}t\\
        \qquad\qquad + \left [ \Sigma_X(t) \mathcal{Y}^{\varepsilon}(t) + \mathcal{Y}^{\varepsilon}(t) \Sigma^*_X(t) + \mathcal{T}_{\Psi^{\varepsilon}}(t) \right ] \mathrm{d}W(t)\\
    \mathcal{Y}^{\varepsilon}(0) = 0 \in L_2(H).
    \end{cases}
    \end{equation}
    Here, we use the notation
    \begin{equation}\label{T_Phi_epsilon_T_Psi_epsilon}
    \begin{split}
        \mathcal{T}_{\Phi^{\varepsilon}} &: [0,T] \times \Omega \to L_2(H), \qquad\qquad\quad \mathcal{T}_{\Phi^{\varepsilon}}(t) := \mathcal{T}_{\Phi^{\varepsilon}(t)}\\
        \mathcal{T}_{\Psi^{\varepsilon}} &: [0,T]\times \Omega \to L(\mathbb{R}^w,L_2(H)), \qquad \mathcal{T}_{\Psi^{\varepsilon}}(t) := ( \xi \mapsto \mathcal{T}_{\Psi^{\varepsilon}(t,\xi)} )
    \end{split}
    \end{equation}
    where $\Phi^{\varepsilon}:[0,T] \times \Omega \to H\otimes H$ is given by
    \begin{equation}\label{Phi_epsilon}
    \begin{split}
        \Phi^{\varepsilon}(t)& = Y^{\varepsilon}(t) \otimes B_{Y}(t) \mathbb{E} [ Y^{\varepsilon}(t) ] + Y^{\varepsilon}(t) \otimes \delta B(t) + B_{Y}(t) \mathbb{E} [ Y^{\varepsilon}(t) ] \otimes Y^{\varepsilon}(t) + \delta B(t) \otimes Y^{\varepsilon}(t)\\
        &\quad + \sum_{i=1}^{w} \bigg ( \Sigma_X(t) Y^{\varepsilon}(t) \xi_i \otimes \Sigma_{Y}(t) \mathbb{E} [ Y^{\varepsilon}(t) ] \xi_i + \Sigma_{Y}(t) \mathbb{E} [ Y^{\varepsilon}(t) ] \xi_i \otimes \Sigma_X(t) Y^{\varepsilon}(t) \xi_i + \Sigma_X(t) Y^{\varepsilon}(t) \xi_i \otimes \delta \Sigma(t) \xi_i\\
        &\qquad\qquad\quad + \delta \Sigma(t) \xi_i \otimes \Sigma_X(t) Y^{\varepsilon}(t) \xi_i + \Sigma_{Y}(t) \mathbb{E} [ Y^{\varepsilon}(t) ] \xi_i \otimes \Sigma_{Y}(t) \mathbb{E} [ Y^{\varepsilon}(t) ] \xi_i + \Sigma_{Y}(t) \mathbb{E} [ Y^{\varepsilon}(t) ] \xi_i \otimes \delta \Sigma(t) \xi_i\\
        &\qquad\qquad\quad + \delta \Sigma(t) \xi_i \otimes \Sigma_{Y}(t) \mathbb{E} [ Y^{\varepsilon}(t) ] \xi_i + \delta \Sigma(t) \xi_i \otimes \delta \Sigma(t) \xi_i \bigg ),
    \end{split}
    \end{equation}
    where $(\xi_i)_{i=1,\dots,w}$ denotes the standard basis of $\mathbb{R}^w$, and $\Psi^{\varepsilon} : [0,T] \times \Omega \times \mathbb{R}^w \to H\otimes H$, is given by
    \begin{equation}\label{Psi_epsilon}
        \Psi^{\varepsilon}(t,\xi) = Y^{\varepsilon}(t) \otimes \Sigma_{Y}(t) \mathbb{E} [ Y^{\varepsilon}(t) ] \xi + Y^{\varepsilon}(t) \otimes \delta \Sigma(t) \xi + \Sigma_{Y}(t) \mathbb{E} [ Y^{\varepsilon}(t) ] \xi \otimes Y^{\varepsilon}(t) + \delta \Sigma(t) \xi \otimes Y^{\varepsilon}(t).
    \end{equation}
\end{theorem}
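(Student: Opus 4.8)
The plan is to regularize the unbounded drift by the Yosida approximation, apply the Itô product rule to the bounded bilinear tensor map for the regularized processes, regroup the resulting terms into operator form using the identities for $\mathcal{T}_{f\otimes g}$, and finally pass to the limit. For $\lambda>0$ let $A_{\lambda}$ be the Yosida approximation of $A$ and let $Y^{\varepsilon,\lambda}$ solve \eqref{first_variational_equation} with $A$ replaced by $A_{\lambda}$. Since $A_{\lambda}$ is bounded, $Y^{\varepsilon,\lambda}$ is a genuine $H$-valued It\^o process with drift $a^{\lambda}(t) = A_{\lambda}Y^{\varepsilon,\lambda}(t) + B_X(t)Y^{\varepsilon,\lambda}(t) + B_Y(t)\mathbb{E}[Y^{\varepsilon,\lambda}(t)] + \delta B(t)$ and diffusion $\Theta^{\lambda}(t) = \Sigma_X(t)Y^{\varepsilon,\lambda}(t) + \Sigma_Y(t)\mathbb{E}[Y^{\varepsilon,\lambda}(t)] + \delta\Sigma(t)\in L(\mathbb{R}^w,H)$. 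Because the map $(y,z)\mapsto\mathcal{T}_{y\otimes z}$ is a bounded bilinear map from $H\times H$ into $L_2(H)$, the It\^o product rule applies and yields, for $\mathcal{Y}^{\varepsilon,\lambda}(t):=\mathcal{T}_{Y^{\varepsilon,\lambda}(t)\otimes Y^{\varepsilon,\lambda}(t)}$,
\begin{equation*}
\mathrm{d}\mathcal{Y}^{\varepsilon,\lambda}(t) = \mathcal{T}_{a^{\lambda}(t)\otimes Y^{\varepsilon,\lambda}(t) + Y^{\varepsilon,\lambda}(t)\otimes a^{\lambda}(t)}\,\mathrm{d}t + \sum_{i=1}^{w}\mathcal{T}_{\Theta^{\lambda}(t)\xi_i\otimes\Theta^{\lambda}(t)\xi_i}\,\mathrm{d}t + \left( \xi\mapsto\mathcal{T}_{\Theta^{\lambda}(t)\xi\otimes Y^{\varepsilon,\lambda}(t) + Y^{\varepsilon,\lambda}(t)\otimes\Theta^{\lambda}(t)\xi} \right)\mathrm{d}W(t).
\end{equation*}

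The next step is purely algebraic and uses the identification $\mathcal{T}_{f\otimes g}(h)=\langle f,h\rangle g$. For $S\in L(H)$ one checks directly that $\mathcal{T}_{f\otimes Sg}=S\,\mathcal{T}_{f\otimes g}$, that $\mathcal{T}_{(Sf)\otimes g}=\mathcal{T}_{f\otimes g}\,S^*$, and hence $\mathcal{T}_{Sf\otimes Sg}=S\,\mathcal{T}_{f\otimes g}\,S^*$. Applying the first two identities with $S=A_{\lambda}$ and $S=B_X(t)$ to the drift term $\mathcal{T}_{a^{\lambda}\otimes Y^{\varepsilon,\lambda}+Y^{\varepsilon,\lambda}\otimes a^{\lambda}}$ produces exactly $(A_{\lambda}+B_X(t))\mathcal{Y}^{\varepsilon,\lambda}(t) + \mathcal{Y}^{\varepsilon,\lambda}(t)(A_{\lambda}^*+B_X^*(t))$, together with the four remaining terms $Y^{\varepsilon,\lambda}\otimes B_Y\mathbb{E}[Y^{\varepsilon,\lambda}]$, $Y^{\varepsilon,\lambda}\otimes\delta B$, $B_Y\mathbb{E}[Y^{\varepsilon,\lambda}]\otimes Y^{\varepsilon,\lambda}$, $\delta B\otimes Y^{\varepsilon,\lambda}$, which are the first four summands of $\Phi^{\varepsilon}$. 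Writing $S_i(t):=\Sigma_X(t)(\cdot)\xi_i\in L(H)$ so that $\Sigma_X(t)Y^{\varepsilon,\lambda}(t)\xi_i=S_i(t)Y^{\varepsilon,\lambda}(t)$, the diagonal part of the quadratic variation becomes $\sum_i \mathcal{T}_{S_iY^{\varepsilon,\lambda}\otimes S_iY^{\varepsilon,\lambda}}=\sum_i S_i\mathcal{Y}^{\varepsilon,\lambda}S_i^*$, which is precisely the notation $\text{Tr}(\Sigma_X(t)\mathcal{Y}^{\varepsilon,\lambda}(t)\Sigma_X^*(t))$; expanding $\Theta^{\lambda}\xi_i=\Sigma_XY^{\varepsilon,\lambda}\xi_i+\Sigma_Y\mathbb{E}[Y^{\varepsilon,\lambda}]\xi_i+\delta\Sigma\xi_i$ in the remaining eight cross terms reproduces the summation part of $\Phi^{\varepsilon}$, and the same expansion of the diffusion term, after removing the $\Sigma_X(t)\mathcal{Y}^{\varepsilon,\lambda}(t)+\mathcal{Y}^{\varepsilon,\lambda}(t)\Sigma_X^*(t)$ contribution, gives $\mathcal{T}_{\Psi^{\varepsilon,\lambda}}$. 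This establishes \eqref{third_variational_equation} with $A_{\lambda}$ in place of $A$; since $A_{\lambda}$ is bounded, this strong identity is equivalent to the corresponding mild identity driven by the semigroup $\mathcal{X}\mapsto e^{tA_{\lambda}}\mathcal{X}e^{tA_{\lambda}^*}$ on $L_2(H)$.

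It remains to let $\lambda\to\infty$. Using the Yosida convergence $Y^{\varepsilon,\lambda}\to Y^{\varepsilon}$ in $L^2_{\mathcal{F}}(\Omega;C([0,T];H))$, the boundedness of $B_X,\Sigma_X,B_Y,\Sigma_Y$ from Lemma \ref{lemma_B_Sigma_Lipschitz}, and the moment estimate \eqref{asymptotic_Y_e}, one obtains $\mathcal{Y}^{\varepsilon,\lambda}\to\mathcal{Y}^{\varepsilon}$, $\mathcal{T}_{\Phi^{\varepsilon,\lambda}}\to\mathcal{T}_{\Phi^{\varepsilon}}$ and $\mathcal{T}_{\Psi^{\varepsilon,\lambda}}\to\mathcal{T}_{\Psi^{\varepsilon}}$ in the relevant $L^2$ spaces; combined with $e^{tA_{\lambda}}\to e^{tA}$ strongly and the stability of the $L_2(H)$-valued mild formulation from Appendix \ref{appendix_Operator_Valued_BSDE} (Definition \ref{definition_mild_solution_forward_equation}), this identifies $\mathcal{Y}^{\varepsilon}$ as the mild solution of \eqref{third_variational_equation}. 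The main obstacle is precisely this passage to the limit at the level of the operator-valued mild formulation: one must verify that every term of $\Phi^{\varepsilon}$ and $\Psi^{\varepsilon}$ defines a genuinely $L_2(H)$-valued (respectively $L(\mathbb{R}^w,L_2(H))$-valued) process with enough integrability for the deterministic and stochastic convolutions $\int_0^t e^{(t-s)A}(\cdot)e^{(t-s)A^*}\,\mathrm{d}s$ to be well defined and continuous in the data. This Hilbert--Schmidt integrability is where the special structure enters, since it relies on $\mathcal{Y}^{\varepsilon}$ being rank one, on the Hilbert--Schmidt bound for $B_X$, and on the refined estimate for $\mathbb{E}[Y^{\varepsilon}(t)]$ from Proposition \ref{proposition_asymptotic_expectation_new}.
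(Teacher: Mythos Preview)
Your approach is essentially the same as the paper's: regularize with the Yosida approximation $A_{\lambda}$, derive the operator-valued equation for $\mathcal{Y}^{\varepsilon,\lambda}$ via an It\^o product computation and the algebraic identities for $\mathcal{T}_{f\otimes g}$, and then pass to the limit using the Yosida stability result from Appendix~\ref{appendix_Operator_Valued_BSDE}. The only cosmetic difference is that the paper tests against the basis elements $\mathcal{T}_{e_j\otimes e_k}$ and applies the scalar It\^o formula to $\langle Y^{\varepsilon,\lambda},e_j\rangle\langle Y^{\varepsilon,\lambda},e_k\rangle$, while you apply the It\^o rule directly to the bounded bilinear map $(y,z)\mapsto\mathcal{T}_{y\otimes z}$; these are equivalent.

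One correction to your final paragraph: the passage to the limit here does \emph{not} rely on Proposition~\ref{proposition_asymptotic_expectation_new} or on the Hilbert--Schmidt bound for $B_X$. What is needed is only that $\mathcal{T}_{\Phi^{\varepsilon}}\in L^2_{\mathcal{F}}([0,T]\times\Omega;L_2(H))$ and $\mathcal{T}_{\Psi^{\varepsilon}}\in L^2_{\mathcal{F}}([0,T]\times\Omega;L(\mathbb{R}^w,L_2(H)))$, and that $B_X,\Sigma_X$ are bounded in $L(H)$ and $L(H,L_2(\mathbb{R}^w,H))$ respectively (Assumption~\ref{assumption_forward_operator_equation}(ii)); the $L^2$ integrability of the inhomogeneities follows from $\|\mathcal{T}_{f\otimes g}\|_{L_2(H)}=\|f\|\,\|g\|$ and the moment bound \eqref{asymptotic_Y_e}. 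The limit step is precisely Theorem~\ref{theorem_yosida_approximation_forward_equation} (not just Definition~\ref{definition_mild_solution_forward_equation}): the solutions $\mathcal{Y}^{\varepsilon,\lambda}$ of the regularized operator equation converge to the unique mild solution of \eqref{third_variational_equation}, and since $Y^{\varepsilon,\lambda}\to Y^{\varepsilon}$ one also has $\mathcal{Y}^{\varepsilon,\lambda}\to\mathcal{T}_{Y^{\varepsilon}\otimes Y^{\varepsilon}}$, so the two limits coincide.
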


\begin{remark}
    Regarding the definition of the trace term in equation \eqref{third_variational_equation}, see Remark \ref{remark_trace_term_appendix}
\end{remark}

\begin{proof}
    First, note that
    \begin{equation}
        \| \mathcal{Y}^{\varepsilon}(t) - \mathcal{Y}^{\varepsilon}(s) \|_{L_2(H)} \leq \| Y^{\varepsilon}(t) \| \|Y^{\varepsilon}(t) - Y^{\varepsilon}(s) \| + \| Y^{\varepsilon}(t) - Y^{\varepsilon}(s) \| \| Y^{\varepsilon}(s) \|,
    \end{equation}
    which shows that $\mathcal{Y}^{\varepsilon}$ is continuous $\mathbb{P}$-almost surely. Moreover,
    \begin{equation}
        \mathbb{E} \left [ \sup_{t\in [0,T]} \| \mathcal{Y}^{\varepsilon}(t) \|_{L_2(H)}^2 \right ] = \mathbb{E} \left [ \sup_{t\in [0,T]} \| Y^{\varepsilon}(t) \|^4 \right ] < \infty.
    \end{equation}
    The progressive measurability of $\mathcal{Y}^{\varepsilon}$ follows directly from the progressive measurability of $Y^{\varepsilon}$. Hence, $\mathcal{Y}^{\varepsilon}\in L^2_{\mathcal{F}}(\Omega; C([0,T];L_2(H)))$.

    Let $A_{\lambda}$ be the Yosida approximation of $A$. Let $Y^{\varepsilon,\lambda}$ be the solution of equation \eqref{first_variational_equation} with $A$ replaced by $A_{\lambda}$, and let $\mathcal{Y}^{\varepsilon,\lambda}(t)$ be the Hilbert--Schmidt operator associated with $Y^{\varepsilon,\lambda}(t)\otimes Y^{\varepsilon,\lambda}(t)$. Let $(e_j)_{j\in\mathbb{N}}$ be an orthonormal basis of $H$. Then $(\mathcal{T}_{e_j\otimes e_k})_{j,k\in\mathbb{N}}$ is an orthonormal basis of $L_2(H)$ and we have
    \begin{equation}\label{derivation_tensor_product}
    \begin{split}
        &\mathrm{d} \langle \mathcal{Y}^{\varepsilon,\lambda}(t) , \mathcal{T}_{e_j\otimes e_k} \rangle_{L_2(H)}\\
        &= \mathrm{d} \left ( \langle Y^{\varepsilon,\lambda}(t), e_j \rangle \langle Y^{\varepsilon,\lambda}(t), e_k \rangle \right )\\
        %&= \langle Y^{\varepsilon,\lambda}(t), e_j \rangle \mathrm{d} \langle Y^{\varepsilon,\lambda}(t), e_k \rangle + \langle Y^{\varepsilon,\lambda}(t), e_k \rangle \mathrm{d} \langle Y^{\varepsilon,\lambda}(t), e_j \rangle + \mathrm{d} \langle \langle Y^{\varepsilon,\lambda} , e_j \rangle , \langle Y^{\varepsilon,\lambda} , e_k \rangle \rangle_t\\
        &= \langle Y^{\varepsilon,\lambda}(t), e_j \rangle \langle A_{\lambda} Y^{\varepsilon,\lambda}(t) + B_X(t) Y^{\varepsilon,\lambda}(t) + B_Y(t) \mathbb{E} [ Y^{\varepsilon,\lambda}(t) ] + \delta B(t) ,e_k \rangle \mathrm{d}t\\
        &\quad + \langle Y^{\varepsilon,\lambda}(t), e_j \rangle \langle \Sigma_X(t) Y^{\varepsilon,\lambda}(t) + \Sigma_Y(t) \mathbb{E} [ Y^{\varepsilon,\lambda}(t) ] + \delta \Sigma(t) , e_k \rangle \mathrm{d}W(t)\\
        &\quad + \langle Y^{\varepsilon,\lambda}(t), e_k \rangle \langle A_{\lambda} Y^{\varepsilon,\lambda}(t) + B_X(t) Y^{\varepsilon,\lambda}(t) + B_Y(t) \mathbb{E} [ Y^{\varepsilon,\lambda}(t) ] + \delta B(t) ,e_j \rangle \mathrm{d}t\\
        &\quad + \langle Y^{\varepsilon,\lambda}(t), e_k \rangle \langle \Sigma_X(t) Y^{\varepsilon,\lambda}(t) + \Sigma_Y(t) \mathbb{E} [ Y^{\varepsilon,\lambda}(t) ] + \delta \Sigma(t) , e_j \rangle \mathrm{d}W(t)\\
        &\quad + \sum_{i=1}^{w} \langle ( \Sigma_X(t) Y^{\varepsilon,\lambda}(t) + \Sigma_Y(t) \mathbb{E} [ Y^{\varepsilon,\lambda}(t) ] + \delta \Sigma(t) ) \xi_i , e_j \rangle \langle ( \Sigma_X(t) Y^{\varepsilon,\lambda}(t) + \Sigma_Y(t) \mathbb{E} [ Y^{\varepsilon,\lambda}(t) ] + \delta \Sigma(t) ) \xi_i , e_k \rangle \mathrm{d}t.
    \end{split}
    \end{equation}
    Now, we note that
    \begin{equation}
    \begin{split}
        &\langle Y^{\varepsilon,\lambda}(t), e_j \rangle \langle A_{\lambda} Y^{\varepsilon,\lambda}(t) , e_k \rangle + \langle A_{\lambda}Y^{\varepsilon,\lambda}(t), e_j \rangle \langle Y^{\varepsilon,\lambda}(t), e_k \rangle\\
        &= \langle ( \mathcal{T}_{Y^{\varepsilon,\lambda}(t) \otimes A_{\lambda} Y^{\varepsilon,\lambda}(t)} + \mathcal{T}_{A_{\lambda} Y^{\varepsilon,\lambda}(t) \otimes Y^{\varepsilon,\lambda}(t)} ) e_j, e_k \rangle\\
        &= \langle \mathcal{T}_{(I\otimes A_{\lambda})(Y^{\varepsilon,\lambda}(t) \otimes Y^{\varepsilon,\lambda}(t))} + \mathcal{T}_{(A_{\lambda} \otimes I)(Y^{\varepsilon,\lambda}(t) \otimes Y^{\varepsilon,\lambda}(t))}, \mathcal{T}_{e_j\otimes e_k} \rangle_{L_2(H)}\\
        &= \langle A_{\lambda} \mathcal{Y}^{\varepsilon,\lambda}(t) + \mathcal{Y}^{\varepsilon,\lambda}(t) A_{\lambda}^*, \mathcal{T}_{e_j \otimes e_k} \rangle_{L_2(H)}
    \end{split}
    \end{equation}
    where in the last step, we used the fact that for two operators $\mathcal{A},\mathcal{B}\in L(H)$, we have $\mathcal{T}_{(\mathcal{A}\otimes \mathcal{B})(f\otimes g)} = \mathcal{T}_{\mathcal{A}f\otimes \mathcal{B}g} = \mathcal{B} \mathcal{T}_{f\otimes g} \mathcal{A}^*$. Indeed, $\langle \mathcal{B} \mathcal{T}_{f\otimes g} \mathcal{A}^* e_j,e_k \rangle = \langle f, \mathcal{A}^* e_j \rangle \langle \mathcal{B} g, e_k \rangle = \langle \mathcal{T}_{\mathcal{A}f \otimes \mathcal{B}g} e_j, e_k \rangle$. Moreover, we have
    \begin{equation}
    \begin{split}
        &\sum_{i=1}^{w} \langle \Sigma_X(t) Y^{\varepsilon,\lambda}(t) \xi_i, e_j \rangle \langle \Sigma_X(t) Y^{\varepsilon,\lambda}(t) \xi_i, e_k \rangle\\
        &= \left \langle \sum_{i=1}^{w} ( \Sigma_X(t)(\cdot) \xi_i ) \mathcal{T}_{Y^{\varepsilon,\lambda}(t) \otimes Y^{\varepsilon,\lambda}(t)} ( \Sigma^*_X(t)(\cdot) \xi_i ), \mathcal{T}_{e_j\otimes e_k} \right \rangle_{L_2(H)}\\
        &= \left \langle \text{Tr} \left ( \Sigma_X(t) \mathcal{Y}^{\varepsilon,\lambda}(t) \Sigma^*_X(t) \right ) , \mathcal{T}_{e_j\otimes e_k} \right \rangle_{L_2(H)}.
    \end{split}
    \end{equation}
    With similar calculations for the remaining terms in \eqref{derivation_tensor_product} we obtain that $\mathcal{Y}^{\varepsilon,\lambda}(t)$ satisfies the equation
    \begin{equation}\label{third_variational_equation_Yosida}
    \begin{cases}
        \mathrm{d}\mathcal{Y}^{\varepsilon,\lambda}(t) = \left [ (A_{\lambda} +B_X(t)) \mathcal{Y}^{\varepsilon,\lambda}(t) + \mathcal{Y}^{\varepsilon,\lambda}(t) (A_{\lambda}^*+B^*_X(t)) + \text{Tr} ( \Sigma_X(t) \mathcal{Y}^{\varepsilon,\lambda}(t) \Sigma^*_X(t) ) + \mathcal{T}_{\Phi^{\varepsilon}}(t) \right ] \mathrm{d}t\\
        \qquad\qquad\qquad + \left [ \Sigma_X(t) \mathcal{Y}^{\varepsilon,\lambda}(t) + \mathcal{Y}^{\varepsilon,\lambda}(t) \Sigma^*_X(t) + \mathcal{T}_{\Psi^{\varepsilon}}(t) \right ] \mathrm{d}W(t)\\
        \mathcal{Y}^{\varepsilon,\lambda}(0) = 0 \in L_2(H),
    \end{cases}
    \end{equation}
    where $\mathcal{T}_{\Phi^{\varepsilon}}$ and $\mathcal{T}_{\Psi^{\varepsilon}}$ are given by \eqref{T_Phi_epsilon_T_Psi_epsilon}.
    
    By Theorem \ref{theorem_yosida_approximation_forward_equation}, $\mathcal{Y}^{\varepsilon,\lambda}$ converges to the mild solution of
    \begin{equation}
    \begin{cases}
        \mathrm{d}\mathcal{Y}^{\varepsilon}(t) = \left [ (A+B_X(t)) \mathcal{Y}^{\varepsilon}(t) + \mathcal{Y}^{\varepsilon}(t) (A^*+B^*_X(t)) + \text{Tr} ( \Sigma_X(t) \mathcal{Y}^{\varepsilon}(t) \Sigma^*_X(t) ) + \mathcal{T}_{\Phi^{\varepsilon}(t)} \right ] \mathrm{d}t\\
        \qquad\qquad + \left [ \Sigma_X(t) \mathcal{Y}^{\varepsilon}(t) + \mathcal{Y}^{\varepsilon}(t) \Sigma^*_X(t) + \mathcal{T}_{\Psi^{\varepsilon}(t)} \right ] \mathrm{d}W(t)\\
        \mathcal{Y}^{\varepsilon}(0) = 0 \in L_2(H).
    \end{cases}
    \end{equation}
    Moreover, since $Y^{\varepsilon,\lambda} \to Y^{\varepsilon}$ as $\lambda\to\infty$, $\mathcal{Y}^{\varepsilon,\lambda}(t)$ also converges to $\mathcal{T}_{Y^{\varepsilon}(t) \otimes Y^{\varepsilon}(t)}$. Thus, the solution of this equation is given by $\mathcal{Y}^{\varepsilon}(t) = \mathcal{T}_{Y^{\varepsilon}(t) \otimes Y^{\varepsilon}(t)}$.
\end{proof}

\subsection{Second Order Adjoint Equation}
    
We introduce the $L_2(H)$-valued second order adjoint equation
\begin{equation}\label{second_order_adjoint}
\begin{cases}
    -\mathrm{d}P(t) = \Big [ P(t) ( A + B_X(t) ) + ( A^* + B_X^*(t) ) P(t) + \text{Tr} ( \Sigma^*_X(t) P(t) \Sigma_X(t) + \Sigma^*_X(t) Q(t) + Q(t) \Sigma_X(t) )\\
    \qquad\qquad\qquad + \mathcal{H}_{XX}(t) \Big ] \mathrm{d}t - Q(t) \mathrm{d}W(t)\\
    P(T) = - M_{XX}(T) \in L_2(H),
\end{cases}
\end{equation}
where $(p,q)$ is the mild solution of \eqref{first_order_adjoint}, the Hamiltonian $\mathcal{H}: [0,T]\times H \times H \times U_{\text{ad}} \times H \times L(\mathbb{R}^w,H) \to \mathbb{R}$ is given by
\begin{equation}
    \mathcal{H}(t,X, Y,u,p,q) = \langle B(t,X,Y,u), p \rangle +\langle \Sigma(t,X,Y,u), q \rangle_{L_2(\mathbb{R}^w,H)} - L(t,X,Y,u)
\end{equation}
and we denote
\begin{equation}
    \mathcal{H}(t) = \mathcal{H}(t,X(t), \mathbb{E}[X(t)],u(t),p(t),q(t)), \qquad \mathcal{H}_{XX}(t) = \mathcal{H}_{XX}(t,X(t), \mathbb{E}[X(t)],u(t),p(t),q(t)).
\end{equation}
Note, that equation \eqref{second_order_adjoint} is not of mean-field type in contrast to the first order adjoint equation \eqref{first_order_adjoint}.

\begin{theorem}
    Let Assumptions \ref{assumption_b_sigma} and \ref{assumption_l_m} be satisfied. Then, the second order adjoint equation \ref{second_order_adjoint} admits a unique mild solution $(P,Q) \in L_{\mathcal{F}}^2(\Omega; C([0,T];L_2(H))) \times L_{\mathcal{F}}^2(\Omega\times [0,T]; L(\mathbb{R}^w,L_2(H)))$ in the sense of Definition \ref{definition_mild_solution_backward_riccati}.
\end{theorem}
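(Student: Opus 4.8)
The plan is to recognize equation \eqref{second_order_adjoint} as a \emph{linear} operator-valued backward equation posed in the Hilbert space $L_2(H)$ and to invoke the general well-posedness theory developed in Appendix \ref{appendix_Operator_Valued_BSDE}; the only genuine work is to verify that the data and coefficients meet the hypotheses of that theory. Although the name of Definition \ref{definition_mild_solution_backward_riccati} suggests a Riccati structure, the right-hand side of \eqref{second_order_adjoint} is affine in the pair $(P,Q)$ — there is no quadratic term in $P$ — so once the data are shown to be admissible, existence and uniqueness follow from the linear specialization of that theory rather than from a genuine Riccati analysis.

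First I would check that the driving data live in the correct spaces. The terminal condition $-M_{XX}(T)$ is $\mathcal{F}_T$-measurable and, by Lemma \ref{lemma_properties_L_M}, takes values in $L_2(H)$ and is bounded, hence $-M_{XX}(T)\in L^2(\Omega;L_2(H))$. For the inhomogeneous term one computes from the definition of the Hamiltonian that
\begin{equation}
    \mathcal{H}_{XX}(t) = \langle p(t), B_{XX}(t)\rangle + \langle q(t), \Sigma_{XX}(t)\rangle_{L_2(\mathbb{R}^w,H)} - L_{XX}(t),
\end{equation}
where $(p,q)$ is the mild solution of the first order adjoint equation \eqref{first_order_adjoint} and each pairing is the $L_2(H)$ operator associated with the corresponding bilinear form. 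By Lemma \ref{lemma_B_Sigma_Lipschitz} the operators $B_{XX}(t)$ and $\Sigma_{XX}(t)$ have finite rank (they take values in the finite-dimensional component of $H$), while $L_{XX}(t)$ is Hilbert--Schmidt by Lemma \ref{lemma_properties_L_M}; consequently $\mathcal{H}_{XX}(t)\in L_2(H)$ with $\|\mathcal{H}_{XX}(t)\|_{L_2(H)}\leq C(1+\|p(t)\|+\|q(t)\|_{L(\mathbb{R}^w,H)})$. Since $(p,q)\in L^2_{\mathcal{F}}(\Omega;C([0,T];H))\times L^2_{\mathcal{F}}(\Omega\times[0,T];L(\mathbb{R}^w,H))$, this gives $\mathcal{H}_{XX}\in L^2_{\mathcal{F}}(\Omega\times[0,T];L_2(H))$, the admissibility condition for the source.

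Next I would verify the structural hypotheses and invoke the appendix. By Lemma \ref{lemma_A_pseudo_contraction} the semigroup $e^{tA}$ is a pseudo-contraction, so the sandwich map $\Xi\mapsto e^{tA^*}\Xi\,e^{tA}$ defines a $C_0$-semigroup on $L_2(H)$ whose formal generator produces the Lyapunov terms $P(A+B_X)+(A^*+B_X^*)P$; this is exactly the semigroup underlying the mild formulation of Definition \ref{definition_mild_solution_backward_riccati}. The remaining coefficients act as bounded linear operators: since $\Sigma_X(t)$ is bounded (Lemma \ref{lemma_B_Sigma_Lipschitz}), composition with a Hilbert--Schmidt operator stays Hilbert--Schmidt, so $P\mapsto \mathrm{Tr}(\Sigma_X^*(t) P\,\Sigma_X(t))$ is bounded on $L_2(H)$ and $Q\mapsto\mathrm{Tr}(\Sigma_X^*(t) Q+Q\Sigma_X(t))$ is bounded from $L(\mathbb{R}^w,L_2(H))$ to $L_2(H)$, the trace terms being read as in Remark \ref{remark_trace_term_appendix}; multiplication by $B_X(t)$ and $B_X^*(t)$ is likewise bounded. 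With all coefficients bounded and the data admissible, the general well-posedness result of Appendix \ref{appendix_Operator_Valued_BSDE} applies. As in the forward case of Theorem \ref{theorem_yosida_approximation_forward_equation}, that theory proceeds by first solving the equation with $A$ replaced by its Yosida approximation $A_\lambda$ — where the sandwich generator is bounded and a contraction argument applies — and then passing to the limit $\lambda\to\infty$, yielding a unique mild solution $(P,Q)\in L^2_{\mathcal{F}}(\Omega;C([0,T];L_2(H)))\times L^2_{\mathcal{F}}(\Omega\times[0,T];L(\mathbb{R}^w,L_2(H)))$.

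The main obstacle, and the place where the special structure is essential, is precisely the step guaranteeing that the equation can be solved inside the \emph{Hilbert} space $L_2(H)$ rather than the non-reflexive Banach space $L(H)$: this requires that the terminal datum $M_{XX}(T)$, the source $\mathcal{H}_{XX}$, and all coefficient maps preserve the Hilbert--Schmidt class. As noted above, this is ensured by the finite-rank/Hilbert--Schmidt properties of the second derivatives $B_{XX}$, $\Sigma_{XX}$, $L_{XX}$, $M_{XX}$ from Lemmas \ref{lemma_B_Sigma_Lipschitz} and \ref{lemma_properties_L_M}, together with the fact that bounded operators compose with Hilbert--Schmidt operators to yield Hilbert--Schmidt operators. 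Once this is in place, uniqueness follows from linearity, since the difference of two solutions solves the homogeneous equation with zero data and hence vanishes by the associated a priori estimate, completing the proof.
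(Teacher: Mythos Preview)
Your proposal is correct and takes essentially the same approach as the paper: the paper's proof is simply a one-line citation to \cite[Theorem 5.4]{guatteri_tessitore_2005} (restated as Theorem \ref{theorem_yosida_approximation_backward_Riccati} in Appendix \ref{appendix_Operator_Valued_BSDE}), and you have spelled out precisely the verification of the hypotheses of that result --- in particular, that the terminal datum and source term are Hilbert--Schmidt thanks to the finite-rank structure from Lemmas \ref{lemma_B_Sigma_Lipschitz} and \ref{lemma_properties_L_M}, and that $B_X$, $\Sigma_X$ are bounded. One minor point you could add for completeness is that $M_{XX}(T)$ and $\mathcal{H}_{XX}(t)$, being second Fr\'echet derivatives, are automatically symmetric, so the data indeed lie in $L_2^{\text{sym}}(H)$ as required by Assumption \ref{assumption_forward_operator_equation}.
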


\begin{proof}
    The proof can be found in \cite[Theorem 5.4]{guatteri_tessitore_2005}.
\end{proof}

We have the following duality relation.

\begin{proposition}\label{proposition_second_order_adjoint_state_property}
    Let Assumptions \ref{assumption_b_sigma} and \ref{assumption_l_m} be satisfied, and let $\mathcal{Y}^{\varepsilon}(t)$, $t\in [0,T]$, be the solution of equation \eqref{third_variational_equation}. Then, it holds for almost every $\tau\in (0,T)$
    \begin{equation}
    \begin{split}
        \mathbb{E} [ \langle P(T), \mathcal{Y}^{\varepsilon}(T) \rangle_{L_2(H)} ] &= -\mathbb{E} [ M_{XX}(T) Y^{\varepsilon}(T)^2 ]\\
        &= \mathbb{E} \left [ \int_0^T \left ( - \mathcal{H}_{XX}(t) Y^{\varepsilon}(t)^2 + \text{Tr} \left ( \delta \Sigma^*(t) P(t) \delta \Sigma(t) \right ) \right ) \mathrm{d}t \right ] + o(\varepsilon).
    \end{split}
    \end{equation}
\end{proposition}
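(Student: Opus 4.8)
The plan is to read the two equalities separately. The first one is immediate from the terminal data: since $P(T) = -M_{XX}(T)$ and $\mathcal{Y}^{\varepsilon}(T) = \mathcal{T}_{Y^{\varepsilon}(T)\otimes Y^{\varepsilon}(T)}$, I would invoke the elementary identification, valid for $S\in L_2(H)$ and $f,g\in H$,
\[
\langle S, \mathcal{T}_{f\otimes g}\rangle_{L_2(H)} = \langle Sf, g\rangle ,
\]
which follows from $\langle S,\mathcal{T}_{f\otimes g}\rangle_{L_2(H)} = \sum_j \langle Se_j, \langle f,e_j\rangle g\rangle = \langle Sf,g\rangle$. Taking $S = M_{XX}(T)$ (viewed as the Hilbert–Schmidt operator of Lemma \ref{lemma_properties_L_M}) and $f=g=Y^{\varepsilon}(T)$ gives $\langle M_{XX}(T),\mathcal{Y}^{\varepsilon}(T)\rangle_{L_2(H)} = M_{XX}(T)Y^{\varepsilon}(T)^2$, whence the first equality. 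For the second equality I would apply the Itô product rule in $L_2(H)$ to $\langle P(t),\mathcal{Y}^{\varepsilon}(t)\rangle_{L_2(H)}$, using \eqref{second_order_adjoint} for $P$ and the tensor equation \eqref{third_variational_equation} for $\mathcal{Y}^{\varepsilon}$. Because $A$ is unbounded, I would first replace $A$ by its Yosida approximation $A_{\lambda}$, carry out the computation for $\mathcal{Y}^{\varepsilon,\lambda}$ and $(P^{\lambda},Q^{\lambda})$ (where both processes have bounded generators and the classical Itô formula applies), and then pass to the limit $\lambda\to\infty$ using Theorem \ref{theorem_yosida_approximation_forward_equation} and the stability results of Appendix \ref{appendix_Operator_Valued_BSDE}. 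Since $\mathcal{Y}^{\varepsilon}(0)=0$, integrating over $[0,T]$ and taking expectations leaves $\mathbb{E}[\langle P(T),\mathcal{Y}^{\varepsilon}(T)\rangle_{L_2(H)}]$ on the left.

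The heart of the argument is the cancellation of the transport and second-order terms, which is exactly what the structure of \eqref{second_order_adjoint} is engineered to produce. Collecting the drift of $\langle P,\mathcal{Y}^{\varepsilon}\rangle$ from $\langle \mathrm{d}P,\mathcal{Y}^{\varepsilon}\rangle$, from $\langle P,\mathrm{d}\mathcal{Y}^{\varepsilon}\rangle$, and from the Itô covariation $\sum_{i=1}^{w}\langle Q\xi_i, (\Sigma_X\mathcal{Y}^{\varepsilon}+\mathcal{Y}^{\varepsilon}\Sigma_X^*+\mathcal{T}_{\Psi^{\varepsilon}})\xi_i\rangle_{L_2(H)}$, I would use the cyclicity/adjoint identities for the $L_2(H)$-inner product, namely $\langle (A_{\lambda}^*+B_X^*)P,\mathcal{Y}^{\varepsilon}\rangle = \langle P,(A_{\lambda}+B_X)\mathcal{Y}^{\varepsilon}\rangle$ and $\langle P(A_{\lambda}+B_X),\mathcal{Y}^{\varepsilon}\rangle = \langle P,\mathcal{Y}^{\varepsilon}(A_{\lambda}^*+B_X^*)\rangle$ (here the self-adjointness of $P$, inherited from $P(T)=-M_{XX}(T)$ and the symmetric form of \eqref{second_order_adjoint}, is used), so that the first-order transport terms cancel. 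For the second-order terms, writing $\text{Tr}(\Sigma_X\mathcal{Y}^{\varepsilon}\Sigma_X^*) = \sum_i \mathcal{T}_{(\Sigma_X Y^{\varepsilon})\xi_i\otimes(\Sigma_X Y^{\varepsilon})\xi_i}$ and again applying $\langle P,\mathcal{T}_{f\otimes g}\rangle = \langle Pf,g\rangle$, the term $\langle P,\text{Tr}(\Sigma_X\mathcal{Y}^{\varepsilon}\Sigma_X^*)\rangle$ cancels $\langle \text{Tr}(\Sigma_X^*P\Sigma_X),\mathcal{Y}^{\varepsilon}\rangle$, while the covariation of $Q$ with $\Sigma_X\mathcal{Y}^{\varepsilon}+\mathcal{Y}^{\varepsilon}\Sigma_X^*$ cancels $\langle \text{Tr}(\Sigma_X^*Q+Q\Sigma_X),\mathcal{Y}^{\varepsilon}\rangle$. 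What survives is
\[
\mathbb{E}\big[\langle P(T),\mathcal{Y}^{\varepsilon}(T)\rangle_{L_2(H)}\big] = \mathbb{E}\Big[\int_0^T\Big(-\langle\mathcal{H}_{XX}(t),\mathcal{Y}^{\varepsilon}(t)\rangle_{L_2(H)} + \langle P(t),\mathcal{T}_{\Phi^{\varepsilon}}(t)\rangle_{L_2(H)} + \sum_{i=1}^{w}\langle Q(t)\xi_i,\mathcal{T}_{\Psi^{\varepsilon}(t,\xi_i)}\rangle_{L_2(H)}\Big)\,\mathrm{d}t\Big],
\]
and $\langle\mathcal{H}_{XX}(t),\mathcal{Y}^{\varepsilon}(t)\rangle_{L_2(H)} = \mathcal{H}_{XX}(t)Y^{\varepsilon}(t)^2$ as above, the pairing being well defined because $B_{XX}$, $\Sigma_{XX}$, $L_{XX}$ are Hilbert–Schmidt (Lemma \ref{lemma_properties_L_M}), so that $\mathcal{H}_{XX}(t)\in L_2(H)$.

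It then remains to classify the contributions of $\Phi^{\varepsilon}$ in \eqref{Phi_epsilon} and $\Psi^{\varepsilon}$ in \eqref{Psi_epsilon} by their order in $\varepsilon$. Every summand of $\Phi^{\varepsilon}$ and $\Psi^{\varepsilon}$ apart from $\delta\Sigma\xi_i\otimes\delta\Sigma\xi_i$ carries at least one factor $Y^{\varepsilon}$ — contributing $\mathbb{E}[\sup_t\|Y^{\varepsilon}\|^2]^{1/2} = \mathcal{O}(\varepsilon^{1/2})$ by \eqref{asymptotic_Y_e} — together with either a factor $\mathbb{E}[Y^{\varepsilon}]$, of size $\mathcal{O}(\varepsilon)$ by Proposition \ref{proposition_asymptotic_expectation_new}, or a factor $\delta B$, $\delta\Sigma$ supported on $[\tau,\tau+\varepsilon]$. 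Pairing with $P\in L^2_{\mathcal{F}}(\Omega;C([0,T];L_2(H)))$ or $Q\in L^2_{\mathcal{F}}(\Omega\times[0,T];L(\mathbb{R}^w,L_2(H)))$, applying Cauchy–Schwarz, and using the absolute continuity of the integrals $\int_\tau^{\tau+\varepsilon}\mathbb{E}[\|P\|^2]$ and $\int_\tau^{\tau+\varepsilon}\mathbb{E}[\|Q\|^2]$ over the shrinking interval, each such term is $\mathcal{O}(\varepsilon^{3/2})$ or smaller, hence $o(\varepsilon)$, for almost every $\tau$. By contrast, using $\langle P,\mathcal{T}_{f\otimes g}\rangle = \langle Pf,g\rangle$ one has $\sum_i\langle P,\mathcal{T}_{\delta\Sigma\xi_i\otimes\delta\Sigma\xi_i}\rangle = \text{Tr}(\delta\Sigma^*P\delta\Sigma)$, and $\mathbb{E}[\int_0^T\text{Tr}(\delta\Sigma^*(t)P(t)\delta\Sigma(t))\,\mathrm{d}t] = \mathbb{E}[\int_\tau^{\tau+\varepsilon}\text{Tr}(\delta\Sigma^*P\delta\Sigma)\,\mathrm{d}t]$ is genuinely of order $\varepsilon$ and must be retained. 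Combining these estimates yields the stated expansion. I expect the main obstacle to be making the operator-valued Itô formula and the exact cancellation of the transport and second-order (trace and $Q$-cross) terms rigorous through the Yosida limit; the order bookkeeping for $\Phi^{\varepsilon}$ and $\Psi^{\varepsilon}$, while lengthy, is routine once \eqref{asymptotic_Y_e} and Proposition \ref{proposition_asymptotic_expectation_new} are in hand.
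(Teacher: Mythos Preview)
Your approach is correct and essentially identical to the paper's: Yosida-approximate both equations, apply It\^o's formula in $L_2(H)$ to $\langle P^{\lambda},\mathcal{Y}^{\varepsilon,\lambda}\rangle$, observe the cancellations that leave only $-\langle\mathcal{H}_{XX},\mathcal{Y}^{\varepsilon,\lambda}\rangle$, $\langle P^{\lambda},\mathcal{T}_{\Phi^{\varepsilon}}\rangle$, and $\langle Q^{\lambda},\mathcal{T}_{\Psi^{\varepsilon}}\rangle$, pass to the limit via Theorems \ref{theorem_yosida_approximation_forward_equation} and \ref{theorem_yosida_approximation_backward_Riccati}, and then isolate the $\delta\Sigma\xi_i\otimes\delta\Sigma\xi_i$ summand as the only $\mathcal{O}(\varepsilon)$ contribution. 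One small inaccuracy in your order bookkeeping: not every remaining summand of $\Phi^{\varepsilon}$ carries a factor $Y^{\varepsilon}$---the terms $\Sigma_Y\mathbb{E}[Y^{\varepsilon}]\xi_i\otimes\Sigma_Y\mathbb{E}[Y^{\varepsilon}]\xi_i$ and $\Sigma_Y\mathbb{E}[Y^{\varepsilon}]\xi_i\otimes\delta\Sigma\xi_i$ (and its transpose) carry only $\mathbb{E}[Y^{\varepsilon}]$, but since $\|\mathbb{E}[Y^{\varepsilon}]\|=\mathcal{O}(\varepsilon)$ these are in fact even smaller, so your conclusion stands.
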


\begin{proof}
    Let $\mathcal{Y}^{\varepsilon,\lambda}$ and $(P^{\lambda},Q^{\lambda})$ denote the solutions of \eqref{third_variational_equation} and \eqref{second_order_adjoint}, respectively, where $A$ is replaced by its Yosida approximation $A_{\lambda}$. By It\^o's formula, we have
    \begin{equation}\label{second_adjoint_state_property_proof}
    \begin{split}
        &\mathbb{E} \left [ \langle P^{\lambda}(T) , \mathcal{Y}^{\varepsilon,\lambda}(T) \rangle_{L_2(H)} \right ]\\
        %&= \mathbb{E} \left [ \int_0^T \langle \mathcal{Y}^{\varepsilon,\lambda}(t), \mathrm{d}P^{\lambda}(t) \rangle_{L_2(H)} + \int_0^T \langle P^{\lambda}(t), \mathrm{d}\mathcal{Y}^{\varepsilon,\lambda}(t)\rangle_{L_2(H)} + \int_0^T \mathrm{d} \langle P^{\lambda},\mathcal{Y}^{\varepsilon,\lambda} \rangle_t \right ]\\
        &= \mathbb{E} \left [ \int_0^T \left ( \langle P^{\lambda}(t), \mathcal{T}_{\Phi^{\varepsilon}(t)} \rangle_{L_2(H)} + \langle Q^{\lambda}(t), \mathcal{T}_{\Psi^{\varepsilon}(t)} \rangle_{L_2(\mathbb{R}^w,L_2(H))} \right ) \mathrm{d}t - \int_0^T \langle \mathcal{H}_{XX}(t), \mathcal{Y}^{\varepsilon,\lambda}(t) \rangle_{L_2(H)} \mathrm{d}t \right ].
    \end{split}
    \end{equation}
    We note that
    \begin{equation}
    \begin{split}
        &\left | \mathbb{E} \left [ \langle P^{\lambda}(T) , \mathcal{Y}^{\varepsilon,\lambda}(T) \rangle_{L_2(H)} \right ] - \mathbb{E} \left [ \langle P(T) , \mathcal{Y}^{\varepsilon}(T) \rangle_{L_2(H)} \right ] \right |\\
        &\leq \mathbb{E} \left [ | \langle P^{\lambda}(T)- P(T) , \mathcal{Y}^{\varepsilon,\lambda}(T) \rangle_{L_2(H)} | + | \langle P(T), \mathcal{Y}^{\varepsilon,\lambda}(T) - \mathcal{Y}^{\varepsilon}(T) \rangle_{L_2(H)} | \right ]\\
        &\leq \mathbb{E} \left [ \| P^{\lambda}(T)- P(T) \|_{L_2(H)} \| \mathcal{Y}^{\varepsilon,\lambda}(T) \|_{L_2(H)} \right ] + \mathbb{E} \left [ \| P(T) \|_{L_2(H)} \| \mathcal{Y}^{\varepsilon,\lambda}(T) - \mathcal{Y}^{\varepsilon}(T) \|_{L_2(H)} \right ]\\
        &\leq \mathbb{E} \left [ \| P^{\lambda}(T)- P(T) \|_{L_2(H)}^2 \right ]^{\frac12} \mathbb{E} \left [ \| \mathcal{Y}^{\varepsilon,\lambda}(T) \|^2_{L_2(H)} \right ]^{\frac12} + \mathbb{E} \left [ \| P(T) \|_{L_2(H)}^2 \right ]^{\frac12} \mathbb{E} \left [ \| \mathcal{Y}^{\varepsilon,\lambda}(T) - \mathcal{Y}^{\varepsilon}(T) \|_{L_2(H)}^2 \right ]^{\frac12},
    \end{split}
    \end{equation}
    which, by Theorems \ref{theorem_yosida_approximation_forward_equation} and \ref{theorem_yosida_approximation_backward_Riccati}, see also \cite[Theorem 5.4]{guatteri_tessitore_2005}, tends to zero as $\lambda$ tends to $\infty$. Thus, arguing similarly for the remaining terms in \eqref{second_adjoint_state_property_proof}, we obtain
    \begin{equation}
        \mathbb{E} \left [ \langle P(T) , \mathcal{Y}^{\varepsilon}(T) \rangle_{L_2(H)} \right ] = \mathbb{E} \left [ \int_0^T \left ( \langle P(t), \mathcal{T}_{\Phi^{\varepsilon}(t)} \rangle_{L_2(H)} + \langle Q(t), \mathcal{T}_{\Psi^{\varepsilon}(t)} \rangle_{L_2(\mathbb{R}^w,L_2(H))} \right ) \mathrm{d}t - \int_0^T \langle \mathcal{H}_{XX}(t), \mathcal{Y}^{\varepsilon}(t) \rangle_{L_2(H)} \mathrm{d}t \right ].
    \end{equation}
    Noting that
    \begin{equation}
        \mathbb{E} \left [ \int_0^T \left ( \langle P(t), \mathcal{T}_{\Phi^{\varepsilon}(t)} \rangle_{L_2(H)} + \langle Q(t), \mathcal{T}_{\Psi^{\varepsilon}(t)} \rangle_{L_2(\mathbb{R}^w,L_2(H))} \right ) \mathrm{d}t \right ] = \mathbb{E} \left [ \int_0^T \text{Tr} \left (\delta \Sigma^*(t) P(t) \delta \Sigma(t) \right ) \mathrm{d}t \right ] + o(\varepsilon)
    \end{equation}
    concludes the proof.
\end{proof}

\section{Stochastic Maximum Principle}\label{section_stochastic_maximum_principle}

Now we are in a position to state and prove our main result.

\begin{theorem}
    Let Assumptions \ref{assumption_b_sigma} and \ref{assumption_l_m} be satisfied. Let $u(\cdot)$ be an optimal control, and let $(p,q)$ and $(P,Q)$ be the solutions of the first and second order adjoint equations \eqref{first_order_adjoint} and \eqref{second_order_adjoint}, respectively. Then, it holds
    \begin{equation}
        \mathcal{H}(t,X(t),\mathbb{E}[X(t)],u(t),p(t),q(t)) - \mathcal{H}(t,X(t),\mathbb{E}[X(t)],v,p(t),q(t)) - \frac12 \text{Tr} (\delta \Sigma^*(t) P(t) \delta \Sigma(t)) \geq 0
    \end{equation}
    for all $v\in U_{\text{ad}}$, $\mathrm{d}t\otimes \mathbb{P}$-almost everywhere.
\end{theorem}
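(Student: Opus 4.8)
The plan is to exploit the optimality of $u(\cdot)$, which gives $J(u^{\varepsilon}(\cdot))-J(u(\cdot))\geq 0$ for every spike variation, and to rewrite the expansion of Proposition \ref{proposition_cost_functional_expansion} purely in terms of the Hamiltonian and the second order adjoint state. Concretely, I would substitute the first order duality relations \eqref{adjoint_state_property_Y}, \eqref{adjoint_state_property_Z} (Proposition \ref{proposition-duality}) and the second order relation of Proposition \ref{proposition_second_order_adjoint_state_property} into the expansion \eqref{proposition_cost_functional_expansion_equation}, thereby replacing the terminal quantities $\mathbb{E}[M_X(T)(Y^{\varepsilon}(T)+Z^{\varepsilon}(T))]$, $\langle\mathbb{E}[M_Y(T)],\mathbb{E}[Y^{\varepsilon}(T)+Z^{\varepsilon}(T)]\rangle$ and $\tfrac12\mathbb{E}[M_{XX}(T)Y^{\varepsilon}(T)^2]$ by their dual counterparts expressed through $p$, $q$ and $P$.

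The heart of the argument is a chain of cancellations. First, the first order running-cost terms $\mathbb{E}\int_0^T L_X(t)(Y^{\varepsilon}+Z^{\varepsilon})\,dt$ and $\int_0^T\langle\mathbb{E}[L_Y(t)],\mathbb{E}[Y^{\varepsilon}+Z^{\varepsilon}]\rangle\,dt$ cancel exactly against the $L_X$- and $L_Y$-terms generated by \eqref{adjoint_state_property_Y} and \eqref{adjoint_state_property_Z}. Second, the three second order contributions in $X$ — those carrying $B_{XX}$, $\Sigma_{XX}$ (from \eqref{adjoint_state_property_Z}) and $L_{XX}$ (from the expansion) — combine with the term $\tfrac12\mathbb{E}\int_0^T\mathcal{H}_{XX}(t)Y^{\varepsilon}(t)^2\,dt$ produced by Proposition \ref{proposition_second_order_adjoint_state_property}; inserting $\mathcal{H}_{XX}=\langle B_{XX},p\rangle+\langle\Sigma_{XX},q\rangle_{L_2(\mathbb{R}^w,H)}-L_{XX}$ makes these telescope to zero, leaving only $-\tfrac12\mathbb{E}\int_0^T\text{Tr}(\delta\Sigma^*(t)P(t)\delta\Sigma(t))\,dt$. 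Finally, the surviving first order terms assemble, through the definition of $\mathcal{H}$, into the Hamiltonian increment $\langle p,\delta B\rangle+\langle q,\delta\Sigma\rangle_{L_2(\mathbb{R}^w,H)}-\delta L$, which equals $\mathcal{H}(t,X(t),\mathbb{E}[X(t)],v,p(t),q(t))-\mathcal{H}(t,X(t),\mathbb{E}[X(t)],u(t),p(t),q(t))$ and is supported on $[\tau,\tau+\varepsilon]$.

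It then remains to discard the two cross terms $\mathbb{E}\int_0^T\langle p,\delta B_X Y^{\varepsilon}\rangle\,dt$ and $\mathbb{E}\int_0^T\langle q,\delta\Sigma_X Y^{\varepsilon}\rangle_{L_2(\mathbb{R}^w,H)}\,dt$ inherited from \eqref{adjoint_state_property_Z}. Since $\delta B_X$ and $\delta\Sigma_X$ vanish off $[\tau,\tau+\varepsilon]$ and are bounded, Cauchy--Schwarz together with $\mathbb{E}[\sup_t\|Y^{\varepsilon}(t)\|^2]^{1/2}=\mathcal{O}(\varepsilon^{1/2})$ from \eqref{asymptotic_Y_e} gives $\mathcal{O}(\varepsilon\cdot\varepsilon^{1/2})=o(\varepsilon)$. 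Collecting everything yields
\[
\begin{aligned}
J(u^{\varepsilon}(\cdot))-J(u(\cdot)) &= \mathbb{E}\int_{\tau}^{\tau+\varepsilon}\Big(\mathcal{H}(t,X(t),\mathbb{E}[X(t)],u(t),p(t),q(t))\\
&\qquad -\mathcal{H}(t,X(t),\mathbb{E}[X(t)],v,p(t),q(t))-\tfrac12\text{Tr}(\delta\Sigma^*(t)P(t)\delta\Sigma(t))\Big)\,dt+o(\varepsilon).
\end{aligned}
\]
Dividing the nonnegativity $J(u^{\varepsilon})-J(u)\geq 0$ by $\varepsilon$ and letting $\varepsilon\to 0$, Lebesgue's differentiation theorem produces, for a.e. $\tau$, the inequality in expectation.

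To upgrade this to the pointwise $dt\otimes\mathbb{P}$-a.e. statement, I would repeat the whole construction with an $\mathcal{F}_{\tau}$-measurable random spike value $v=v(\omega)\in U_{\text{ad}}$; because the spike acts only on $[\tau,\tau+\varepsilon]$ the perturbed control remains progressively measurable and the derivation is unchanged, so the expectation inequality holds for every such $v$. Fixing a deterministic $v_0$ and setting $B_{\tau}=\{\omega:\mathcal{H}(\cdots,u(\tau),\cdots)-\mathcal{H}(\cdots,v_0,\cdots)-\tfrac12\text{Tr}(\cdots)<0\}\in\mathcal{F}_{\tau}$ (which is $\mathcal{F}_{\tau}$-measurable since $X(\tau),p(\tau),q(\tau),P(\tau)$ are), the choice $v=v_0$ on $B_{\tau}$ and $v=u(\tau)$ on $B_{\tau}^c$ forces $\mathbb{P}(B_{\tau})=0$; ranging $v_0$ over a countable dense subset of the separable set $U_{\text{ad}}$ and using continuity of $\mathcal{H}$ in the control then delivers the claim for all $v\in U_{\text{ad}}$. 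The main obstacle is the bookkeeping of the second paragraph: one must check that the $B_{XX}$, $\Sigma_{XX}$ and $L_{XX}$ contributions telescope precisely against $\mathcal{H}_{XX}$ so that exactly the single trace term $\tfrac12\text{Tr}(\delta\Sigma^*P\delta\Sigma)$ survives, while the final measure-theoretic upgrade demands care with the adaptedness of the randomized spike.
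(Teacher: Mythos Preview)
Your proposal is correct and follows essentially the same route as the paper: start from the cost expansion of Proposition~\ref{proposition_cost_functional_expansion}, substitute the first order duality relations \eqref{adjoint_state_property_Y}--\eqref{adjoint_state_property_Z}, discard the cross terms $\langle p,\delta B_X Y^{\varepsilon}\rangle$ and $\langle q,\delta\Sigma_X Y^{\varepsilon}\rangle$ as $o(\varepsilon)$, then apply Proposition~\ref{proposition_second_order_adjoint_state_property} so that the $B_{XX},\Sigma_{XX},L_{XX}$ contributions telescope against $\mathcal{H}_{XX}$ and only the trace term survives; the paper closes with ``standard localization arguments'' where you spell out the randomized-spike construction explicitly. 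One minor slip: the surviving first order integrand is $\delta L-\langle p,\delta B\rangle-\langle q,\delta\Sigma\rangle=\mathcal{H}(u)-\mathcal{H}(v)$, not the expression with the opposite sign you wrote in the text, though your final displayed formula is correct.
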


\begin{proof}
    By Proposition \ref{proposition_cost_functional_expansion} we have
    \begin{equation}
    \begin{split}
        0 \leq J(u^{\varepsilon}(\cdot)) - J(u(\cdot))&= \mathbb{E} [ M_X(T) ( Y^{\varepsilon}(T) + Z^{\varepsilon}(T)) ] + \frac12 \mathbb{E} [ M_{XX}(T) Y^{\varepsilon}(T)^2 ] + \mathbb{E} [ M_{Y}(T) ] \mathbb{E} [ Y^{\varepsilon}(T) + Z^{\varepsilon}(T) ] \\
        &\quad + \mathbb{E} \left [ \int_0^T L_X(t) (Y^{\varepsilon}(t) + Z^{\varepsilon}(t)) \mathrm{d}t \right ]+ \frac12 \mathbb{E} \left [ \int_0^T L_{XX}(t) Y^{\varepsilon}(t)^2 \mathrm{d}t \right ]\\
        &\quad + \int_0^T \mathbb{E} \left [ L_{Y}(t) \right ] \mathbb{E} [ Y^{\varepsilon}(t) + Z^{\varepsilon}(t) ] \mathrm{d}t + \mathbb{E} \left [ \int_0^T \delta L(t) \mathrm{d}t \right ] + o(\varepsilon).
    \end{split}
    \end{equation}
    Using this first order adjoint state property with $Y^{\varepsilon}$ \eqref{adjoint_state_property_Y}, we obtain
    \begin{equation}
    \begin{split}
        0\leq J(u^{\varepsilon}(\cdot)) - J(u(\cdot))&= \mathbb{E} [ M_X(T) Z^{\varepsilon}(T) ] + \frac12 \mathbb{E} [ M_{XX}(T) Y^{\varepsilon}(T)^2 ] + \mathbb{E} \left [ M_{Y}(T) \right ] \mathbb{E} [ Z^{\varepsilon}(T) ] \\
        &\quad + \mathbb{E} \left [ \int_0^T L_X(t) Z^{\varepsilon}(t) \mathrm{d}t \right ]+ \frac12 \mathbb{E} \left [ \int_0^T L_{XX}(t) Y^{\varepsilon}(t)^2 \mathrm{d}t \right ] + \int_0^T  \mathbb{E} \left [ L_{Y}(t) \right ] \mathbb{E} [ Z^{\varepsilon}(t) ] \mathrm{d}t \\
        &\quad + \mathbb{E} \left [ \int_0^T \left ( \delta L(t) - \langle p(t), \delta B(t) \rangle - \langle q(t), \delta \Sigma(t) \rangle_{L_2(\mathbb{R}^w,L_2(H))} \right ) \mathrm{d}t \right ] + o(\varepsilon).
    \end{split}
    \end{equation}
    Next, using the first order adjoint state property with $Z^{\varepsilon}$ \eqref{adjoint_state_property_Z}, we obtain
    \begin{equation}
    \begin{split}
        0&\leq J(u^{\varepsilon}(\cdot)) - J(u(\cdot))\\
        &= \frac12 \mathbb{E} \left [ M_{XX}(T) Y^{\varepsilon}(T)^2 + \int_0^T L_{XX}(t) Y^{\varepsilon}(t)^2 \mathrm{d}t \right ] + \mathbb{E} \left [ \int_0^T \left ( \delta L(t) - \langle p(t), \delta B(t) \rangle - \langle q(t), \delta \Sigma(t) \rangle_{L_2(\mathbb{R}^w,L_2(H))} \right ) \mathrm{d}t \right ]\\
        &\quad - \mathbb{E} \Bigg [ \int_0^T \left \langle p(t) , \delta B_X(t) Y^{\varepsilon}(t) + \frac12 B_{XX}(t) Y^{\varepsilon}(t)^2 \right \rangle + \left \langle q(t), \delta \Sigma_X(t) Y^{\varepsilon}(t) + \frac12 \Sigma_{XX}(t) Y^{\varepsilon}(t)^2 \right \rangle_{L_2(\mathbb{R}^w,L_2(H))} \mathrm{d}t \Bigg ] + o(\varepsilon).
    \end{split}
    \end{equation}
    Noting that
    \begin{equation}
        \mathbb{E} \left [ \int_0^T \left \langle p(t), \delta B_X(t) Y^{\varepsilon}(t) \right \rangle + \left \langle q(t), \delta \Sigma_X(t) Y^{\varepsilon}(t) \right \rangle_{L_2(\mathbb{R}^w,L_2(H))} \mathrm{d}t \right ] = o(\varepsilon)
    \end{equation}
    we derive
    \begin{equation}\label{cost_functional_expansion_2}
    \begin{split}
        0\leq J(u^{\varepsilon}(\cdot)) - J(u(\cdot)) &= \frac12 \mathbb{E} \left [ M_{XX}(T) Y^{\varepsilon}(T)^2 + \int_0^T L_{XX}(t) Y^{\varepsilon}(t)^2 \mathrm{d}t \right ] \\
        &\quad - \frac12 \mathbb{E} \Bigg [ \int_0^T \left \langle p(t) , B_{XX}(t) Y^{\varepsilon}(t)^2 \right \rangle + \left \langle q(t), \Sigma_{XX}(t) Y^{\varepsilon}(t)^2 \right \rangle_{L_2(\mathbb{R}^w,L_2(H))} \mathrm{d}t \Bigg ]\\
        &\quad + \mathbb{E} \left [ \int_0^T \left ( \delta L(t) - \langle p(t), \delta B(t) \rangle - \langle q(t), \delta \Sigma(t) \rangle_{L_2(\mathbb{R}^w,L_2(H))} \right ) \mathrm{d}t \right ] + o(\varepsilon).
    \end{split}
    \end{equation}
    Applying the second order adjoint state property, Proposition \ref{proposition_second_order_adjoint_state_property}, we obtain
    \begin{equation}
    \begin{split}
        0 &\leq J(u^{\varepsilon}(\cdot)) - J(u(\cdot))\\
        &= \mathbb{E} \left [ \int_0^T \left ( \delta L(t) - \langle p(t), \delta B(t) \rangle - \langle q(t), \delta \Sigma(t) \rangle_{L_2(\mathbb{R}^w,L_2(H))} - \frac12 \text{Tr} \left ( \delta \Sigma^*(t) P(t) \delta \Sigma(t) \right ) \right ) \mathrm{d}t \right ] + o(\varepsilon).
    \end{split}
    \end{equation}
    Now, the result follows from standard localization arguments.
\end{proof}

\appendix

\section{McKean--Vlasov BSDEs in Infinite Dimensions}\label{appendix_McKean_Vlasov_BSDE}

In this subsection, we discuss the existence and uniqueness of mild solutions for McKean--Vlasov BSDEs in infinite dimensional spaces in order to treat the first order adjoint equation \eqref{first_order_adjoint} and the family of operator-valued BSDEs \eqref{bsde_dual}. Since this result is of independent interest, we work in a more general setting than in the main part of the paper. To this end, let $\Xi$ and $K$ be separable Hilbert spaces. For a Hilbert space $\mathcal{X}$, let $\mathcal{P}_2(\mathcal{X})$ be the Wasserstein space over $\mathcal{X}$, i.e., the space of all probability measure with finite second moment, endowed with the Wasserstein distance $\mathbf{d}_{2}:\mathcal{P}_2(\mathcal{X}) \times \mathcal{P}_2(\mathcal{X}) \to \mathbb{R}$.  For more details on the Wasserstein space, see e.g. \cite{ambrosio_gigli_savare_2005,villani_2009}. Let $(W(t))_{t\in [0,T]}$ be a cylindrical Wiener process in $\Xi$ on some probability space $(\Omega,\mathcal{F},(\mathcal{F}_t)_{t\in [0,T]}, \mathbb{P})$, where $(\mathcal{F}_t)$ is its natural filtration augmented by all $\mathbb{P}$-null sets. For a random variable $U:\Omega \to \mathcal{X}$, let $\mathcal{L}(U)$ denote the law of $U$, i.e., the measure $\mathbb{P} \circ U^{-1}$ on $\mathcal{X}$.

Let $F:[0,T]\times \Omega \times K \times L_2(\Xi,K) \times \mathcal{P}_2(K) \times \mathcal{P}_2(L_2(\Xi,K)) \to K$ and let $\xi \in L^2(\Omega;K)$ be $\mathcal{F}_T$-measurable. We consider the BSDE
\begin{equation}\label{general_bsde}
\begin{cases}
    -\mathrm{d}U(t) = \left [ \mathcal{G} U(t) + F(t,U(t),V(t),\mathcal{L}(U(t)),\mathcal{L}(V(t))) \right ] \mathrm{d}t - V(t) \mathrm{d}W(t),\quad t\in [0,T]\\
    U(T) = \xi,
\end{cases}
\end{equation}
and impose the following assumptions on $\mathcal{G}$ and $F$.

\begin{assumption}\label{assumption_general_bsde}
    \begin{enumerate}[label=(\roman*)]
    \item $\mathcal{G}: \mathcal{D}(\mathcal{G}) \subset K \to K$ is an unbounded operator that generates a $C_0$-semigroup $(e^{t\mathcal{G}})_{t\geq 0}$.
    \item There is a constant $C\geq 0$ such that $\mathbb{P}$-almost surely it holds
        \begin{equation}
            \| F(t,u,v,\mu,\beta) - F(t,u',v',\mu',\beta') \|_K \leq C \left ( \|u-u' \| _K+ \|v-v' \|_{L_2(\Xi,K)} + \mathbf{d}_2(\mu,\mu') + \mathbf{d}_2(\beta,\beta') \right )
        \end{equation}
        for all $t\in [0,T]$, $u,u'\in K$, $v,v'\in L_2(\Xi,K)$, $\mu,\mu'\in \mathcal{P}_2(K)$ and $\beta,\beta'\in \mathcal{P}_2(L_2(\Xi,K))$.
        \item $F(\cdot,0,0,\delta_0,\delta_0) \in L^2_{\mathcal{F}}([0,T]\times \Omega;K)$.
    \end{enumerate}
\end{assumption}

\begin{theorem}\label{BSDE_existence_of_solution}
    Let Assumption \ref{assumption_general_bsde} be satisfied. Then, for every $\mathcal{F}_T$-measurable $\xi \in L^2(\Omega;K)$, the McKean--Vlasov BSDE \eqref{general_bsde} has a unique mild solution, i.e., there is a unique pair of processes $(U,V)\in L_{\mathcal{F}}^2(\Omega,C([0,T];K)) \times L_{\mathcal{F}}^2 (\Omega \times [0,T]; L_2(\Xi,K))$ such that
    \begin{equation}\label{general_bsde_mild_solution_biba}
        U(t) = e^{(T-t)\mathcal{G}} \xi + \int_t^T e^{(s-t)\mathcal{G}} F(s,U(s),V(s),\mathcal{L}(U(s)),\mathcal{L}(V(s))) \mathrm{d}s - \int_t^T e^{(s-t) \mathcal{G}} V(s) \mathrm{d}W(s).
    \end{equation}
    for all $t\in [0,T]$, $\mathbb{P}$-almost surely. Moreover,
    \begin{equation}\label{a_priori_estimate_bsde}
        \mathbb{E} \left [ \sup_{t\in [0,T]} \| U(t) \|_K^2 + \int_0^T \| V(t) \|_{L_2(\Xi,K)}^2 \mathrm{d}t \right ] \leq C \mathbb{E} \left [ \| \xi \|_K^2 + \left ( \int_0^T \| F(t,0,0,\delta_0,\delta_0) \|_K \mathrm{d}t \right )^2 \right ]
    \end{equation}
    for some constant $C$ that depends only on the unbounded operator $\mathcal{G}$, the Lipschitz constant of $F$, and on $T$.
\end{theorem}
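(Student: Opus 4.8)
The plan is to decouple the law-dependence and solve \eqref{general_bsde} by a Picard fixed point over the flow of marginal laws. \textbf{Decoupled problem.} Given deterministic flows $(\mu_t)_{t\in[0,T]}\subset\mathcal P_2(K)$ and $(\beta_t)_{t\in[0,T]}\subset\mathcal P_2(L_2(\Xi,K))$ that are square-integrable in the sense $\int_0^T(\mathbf d_2(\mu_t,\delta_0)^2+\mathbf d_2(\beta_t,\delta_0)^2)\,\mathrm dt<\infty$, consider the driver $\widehat F(t,\omega,u,v):=F(t,\omega,u,v,\mu_t,\beta_t)$. By Assumption \ref{assumption_general_bsde}(ii)--(iii) this driver is uniformly Lipschitz in $(u,v)$ and satisfies $\widehat F(\cdot,0,0)\in L^2_{\mathcal F}([0,T]\times\Omega;K)$, so \eqref{general_bsde} with $F$ replaced by $\widehat F$ is a classical Lipschitz BSDE in the Hilbert space $K$ with unbounded generator $\mathcal G$. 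I would obtain its unique mild solution $(U,V)$ by a contraction on $L^2_{\mathcal F}(\Omega;C([0,T];K))\times L^2_{\mathcal F}(\Omega\times[0,T];L_2(\Xi,K))$, the contraction being rendered strict by an exponential weight $e^{\beta t}$ with $\beta$ large (or by patching over short subintervals).

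\textbf{A priori estimate.} To establish \eqref{a_priori_estimate_bsde} I would regularize via the Yosida approximation $\mathcal G_\lambda:=\lambda\mathcal G(\lambda I-\mathcal G)^{-1}$, which is bounded and generates semigroups bounded uniformly in $\lambda$, so that the approximate equations admit strict solutions amenable to It\^o's formula. Applying It\^o to $\|U^\lambda(t)\|_K^2$, controlling the $\langle U^\lambda,\mathcal G_\lambda U^\lambda\rangle$ term through the quasi-dissipativity inherited from the semigroup bound, estimating $\widehat F$ by its Lipschitz property and linear growth, and using the Burkholder--Davis--Gundy and Young inequalities to absorb the martingale part and the $\|V^\lambda\|^2$ term, Gr\"onwall's lemma yields \eqref{a_priori_estimate_bsde} with constants independent of $\lambda$; letting $\lambda\to\infty$ transfers the estimate to the mild solution.

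\textbf{Mean-field fixed point.} Define $\Phi$ on the solution space by freezing $\mu_t=\mathcal L(\tilde U(t))$, $\beta_t=\mathcal L(\tilde V(t))$, solving the decoupled problem, and setting $\Phi(\tilde U,\tilde V):=(U,V)$; a fixed point of $\Phi$ is exactly a mild solution of \eqref{general_bsde}. For two inputs, the difference of the outputs solves a decoupled BSDE whose inhomogeneity is $F(t,U,V,\mathcal L(\tilde U),\mathcal L(\tilde V))-F(t,U',V',\mathcal L(\tilde U'),\mathcal L(\tilde V'))$. Applying the estimate of the previous step to this difference, invoking the Lipschitz Assumption \ref{assumption_general_bsde}(ii), and using the elementary coupling bound $\mathbf d_2(\mathcal L(\tilde U(t)),\mathcal L(\tilde U'(t)))^2\le\mathbb E[\|\tilde U(t)-\tilde U'(t)\|_K^2]$ (and its analogue for $\tilde V$), I obtain
\[
\|\Phi(\tilde U,\tilde V)-\Phi(\tilde U',\tilde V')\|^2\le C\int_0^T\mathbb E\big[\|\tilde U(t)-\tilde U'(t)\|_K^2+\|\tilde V(t)-\tilde V'(t)\|_{L_2(\Xi,K)}^2\big]\,\mathrm dt.
\]
Contraction then follows on a short enough interval (factor $\le CT$), and the global solution on $[0,T]$ is assembled by patching backward over finitely many subintervals; uniqueness and \eqref{a_priori_estimate_bsde} are inherited from the first two steps.

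The genuinely infinite-dimensional difficulty sits in the a priori estimate: since $\mathcal G$ is unbounded, the mild solution lacks the regularity for a direct application of It\^o's formula, so the estimate must be routed through the Yosida regularization together with a controlled, uniform-in-$\lambda$ passage to the limit, using the Hilbert-space stochastic calculus. By contrast the mean-field coupling is benign, because the Wasserstein-$2$ distance between two laws is dominated by the $L^2$-distance of the underlying variables; consequently the law-dependence never costs more than the pathwise Lipschitz dependence already handled in the decoupled step, and the same contraction constant controls both.
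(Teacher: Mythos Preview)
Your outline is essentially correct and close in spirit to the paper's argument, but the layering is different and one step needs repair.

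\textbf{Comparison.} The paper does \emph{not} separate the pathwise and mean-field contractions. Its map $\Phi$ freezes the full input $(U',V')$ inside $F$ (values \emph{and} laws), so the ``inner'' problem is affine---just the Hu--Peng/Guatteri--Tessitore linear BSDE---and a single contraction with the exponential norm $\|\cdot\|_\alpha$ handles everything at once, the Wasserstein terms being absorbed via $\mathbf d_2(\mathcal L(U'_1),\mathcal L(U'_2))^2\le\mathbb E\|U'_1-U'_2\|^2$ just as you do. This is slightly leaner than your two-layer scheme (first solve a Lipschitz BSDE with frozen laws, then contract over laws). For the a priori bound the paper again avoids Yosida/It\^o: it bounds $(U,V)$ by the limit of the Picard iterates, $\|(U,V)\|_\alpha\le\sum_{k\ge1}\|\Phi^k(0,0)-\Phi^{k-1}(0,0)\|_\alpha\le 2\|(U_0,V_0)\|_\alpha$, and then invokes the known estimate for the affine BSDE with forcing $F(\cdot,0,0,\delta_0,\delta_0)$. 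Your Yosida route would work too, but note that Assumption~\ref{assumption_general_bsde} only gives a general $C_0$-semigroup, so ``quasi-dissipativity'' of $\mathcal G_\lambda$ is not immediate when the growth bound has $M>1$; you would need to renorm or go through the mild formulation.

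\textbf{The gap.} In your mean-field step the claimed contraction factor ``$\le CT$'' is not correct. From the stability estimate you wrote,
\[
\|\Phi(\tilde U,\tilde V)-\Phi(\tilde U',\tilde V')\|^2\le C\int_0^T\mathbb E\big[\|\tilde U(t)-\tilde U'(t)\|^2+\|\tilde V(t)-\tilde V'(t)\|^2\big]\mathrm dt,
\]
the $\tilde U$-integral is indeed $\le T\,\mathbb E[\sup_t\|\tilde U-\tilde U'\|^2]$, but the $\tilde V$-integral equals $\|\tilde V-\tilde V'\|_{L^2}^2$ with no small factor, so shrinking the interval does not make $\Phi$ a contraction in the product norm and patching breaks down. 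The fix is exactly the one you already used in the decoupled step and the one the paper uses globally: work in the exponentially weighted norm $\|\cdot\|_\alpha$ and use the $C/\alpha$ prefactor from the Hu--Peng/Guatteri--Tessitore estimate to make $\Phi$ a strict contraction on all of $[0,T]$ for $\alpha$ large.
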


\begin{proof}
    The proof of existence and uniqueness of a mild solutions follows along the same lines as the one for the finite dimensional case, see e.g. \cite[Theorem 3.1]{buckdahn_li_peng_2009}. It is divided into two steps:
    
    {\bf Step 1}: For $\alpha >0$, $U\in
    L_{\mathcal{F}}^2(\Omega,C([0,T];K))$ and $V \in  L_{\mathcal{F}}^2 (\Omega \times [0,T]; L_2(\Xi,K))$, we introduce the following equivalent exponential norms:
    \begin{equation}
        \| U \|^2_{\alpha,K} := \mathbb{E} \left [ \sup_{t\in [0,T]} e^{ 2\alpha t} \| U(t) \|^2 _K \right ],  \qquad  \| V \|_{\alpha,L_2(\Xi,K)}^2 := \mathbb{E} \left [ \int_0^T e^{ 2\alpha t} \| V(t) \|^2 _{L_2(\Xi,K)} \mathrm{d}t \right ].
    \end{equation}
    For $U' \in L_{\mathcal{F}}^2(\Omega;C([0,T];K))$ and $V' \in L_{\mathcal{F}}^2(\Omega\times [0,T];L_2(\Xi,K))$, we consider equation \eqref{general_bsde_mild_solution_biba} with $U(s)$ and $V(s)$ in $F$ replaced by $U'(s)$ and $V'(s)$, respectively, that is
    \begin{equation}\label{general_bsde_1}
        U(t) = e^{(T-t)\mathcal{G}} \xi + \int_t^T e^{(s-t)\mathcal{G}} F(s,U'(s),V'(s),\mathcal{L}(U'(s)),\mathcal{L}(V'(s))) \mathrm{d}s - \int_t^T e^{(s-t) \mathcal{G}} V(s) \mathrm{d}W(s).
    \end{equation}
    By \cite[Theorem 3.1]{hu_peng_1991} or \cite[Theorem 4.4]{guatteri_tessitore_2005}, this equation has a unique mild solution
    \begin{equation}
        (U^{U',V'},V^{U',V'}) \in L_{\mathcal{F}}^2(\Omega,C([0,T];K)) \times L_{\mathcal{F}}^2 (\Omega \times [0,T]; L_2(\Xi,K)).
    \end{equation}

    {\bf Step 2}: We introduce the mapping
    \begin{equation}
    \begin{split}
        \Phi :L_{\mathcal{F}}^2(\Omega,C([0,T];K)) \times L_{\mathcal{F}}^2 (\Omega \times [0,T]; L_2(\Xi,K))&\to L_{\mathcal{F}}^2(\Omega,C([0,T];K)) \times L_{\mathcal{F}}^2 (\Omega \times [0,T]; L_2(\Xi,K))\\
        ({U}',V') &\mapsto \left ( U^{U',V'},V^{U',V'} \right ),
    \end{split}
    \end{equation}
    where $( U^{U',V'},V^{U',V'} )$ is the solution of equation \eqref{general_bsde_1}. Next, let us show that $\Phi$ is a contraction on the space $L_{\mathcal{F}}^2(\Omega;C([0,T];K)) \times L_{\mathcal{F}}^2 (\Omega \times [0,T]; L_2(\Xi,K))$ endowed with the norm $\|(U,V)\|_{\alpha} = (\|U\|_{\alpha,K}^2 + \|V\|^2_{\alpha,L_2(\Xi,K)})^{1/2}$. To this end, let $(U_1,V_1) = \Phi(U'_1,V'_1)$ and $(U_2,V_2) = \Phi(U'_2,V'_2)$. Following \cite[Lemma 2.1]{hu_peng_1991} or \cite[Theorem 4.4, equations (4.13)-(4.14)]{guatteri_tessitore_2005}, we have
    \begin{equation}
    \begin{split}
    % & \mathbb{E} \left [ \int_0^T e^{2 \alpha t} \| U_1(t) -U_2(t) \|^2 \mathrm{d}t + \int_0^T  e^{2 \alpha t}\| V_1(t) -V_2(t)\|_{L(\mathbb{R}^w,H)}^2 \mathrm{d}t \right ] \\ 
    %
    & \mathbb{E} \left [ \sup_{t\in [0,T]} e^{2 \alpha t} \| U_1(t) -U_2(t) \|_K^2 + \int_0^T  e^{2 \alpha t}\| V_1(t) -V_2(t)\|_{L_2(\Xi,K)}^2 \mathrm{d}t \right ] \\
    & \leq \frac{C}{\alpha} \mathbb{E}\left [ \int_0^T e^{2 \alpha t} \left \| F(t,U'_1(t),V'_1(t), \mathcal{L}(U'_1(t)), \mathcal{L}(V'_1(t)) )- F(t,U'_2(t),V'_2(t), \mathcal{L}(U'_2(t)), \mathcal{L}(V'_2(t)) ) \right \|_K^2 \mathrm{d}t \right ],
    \end{split}
    \end{equation}
    where the constant $C\geq 0$ depends only on $\mathcal{G}$. Using the Lipschitz continuity of $F$ to estimate the right-hand side, we obtain
    \begin{equation}
    \begin{split}
        & \mathbb{E} \left [ \sup_{t\in [0,T]} e^{2 \alpha t} \| U_1(t) -U_2(t) \|_K^2 + \int_0^T  e^{2 \alpha t}\| V_1(t) -V_2(t)\|_{L_2(\Xi,K)}^2 \mathrm{d}t \right ] \\ 
        & \leq \frac{C}{\alpha} \mathbb{E} \left [ \int_0^T e^{2 \alpha t} \left ( \| U'_1(t) - U'_2(t) \|_K^2 + \| V'_1(t) - V'_2(t) \|_{L_2(\Xi,K)}^2 \right ) \mathrm{d}t \right ] \\
        & \quad + \frac{C}{\alpha} \int_0^T e^{2 \alpha t} \left ( \mathbf{d}^2_2( \mathcal{L}(U'_1(t)), \mathcal{L}(U'_2(t)) + \mathbf{d}^2_2( \mathcal{L}(V'_1(t)), \mathcal{L}(V'_2(t)) \right ) \mathrm{d}t \\
        &\leq \frac{C}{\alpha} \mathbb{E} \left [ \sup_{t\in [0,T]} e^{2 \alpha t} \| U'_1(t) -U'_2(t) \|_K^2 + \int_0^T  e^{2 \alpha t}\| V'_1(t) -V'_2(t)\|_{L_2(\Xi,K)}^2 \mathrm{d}t \right ]
    \end{split}
    \end{equation}
    for some constant $C\geq 0$ that depends only on $\mathcal{G}$, the Lipschitz constant of $F$, and $T$. Choosing $\alpha = 4C$ shows that $\Phi$ is a contraction with constant $1/2$ on the space $L_{\mathcal{F}}^2(\Omega;C([0,T];K)) \times L_{\mathcal{F}}^2 (\Omega \times [0,T]; L_2(\Xi,K))$ endowed with the norm $\|(U,V)\|_{\alpha}$. Thus, by Banach's fixed point theorem $\Phi$ has a unique fixed point which is the mild solution of equation \eqref{general_bsde}.
    
    Now, let us turn to the proof of the a priori estimate \eqref{a_priori_estimate_bsde} for $(U,V)$. Let $(U_0,V_0)$ be the solution of 
    \begin{equation}
    \begin{cases}
        -\mathrm{d}U_0(t) = \left [ \mathcal{G} U_0(t) + F(t,0,0,\delta_0,\delta_0) \right ] \mathrm{d}t - V_0(t) \mathrm{d}W(t),\quad t\in [0,T]\\
        U_0(T) = \xi,
    \end{cases}
    \end{equation}
    As in \cite[Equations (4.13), (4.14)]{guatteri_tessitore_2005}, we obtain the estimate
    \begin{equation}\label{a_priori_zero}
        \mathbb{E} \left [ \sup_{t\in [0,T]} e^{2 \alpha t} \| U_0(t) \|_K^2 + \int_0^T e^{2\alpha t} \| V_0(t) \|_{L_2(\Xi,K)}^2 \mathrm{d}t  \right ] \leq C e^{2 \alpha T} \mathbb{E} \left [ \| \xi \|_K^2 \right ] + \frac{C}{\alpha} \mathbb{E} \left [ \int_0^T e^{2\alpha s} \| F(s,0,0,\delta_0,\delta_0) \|_K^2 \mathrm{d}s \right ],
    \end{equation}
    for some constant $C$ that depends only on $\mathcal{G}$. Noting that $(U,V) = \lim_{n\to\infty} \Phi^n(0,0)$, we obtain
    \begin{equation}
        \| (U,V) \|_{\alpha} \leq \lim_{n\to \infty} \sum_{k=1}^n \| \Phi^k(0,0) - \Phi^{k-1}(0,0) \|_{\alpha} \leq \lim_{n\to \infty} \sum_{k=1}^n \left (\frac12 \right )^{k-1} \| \Phi(0,0) \|_{\alpha} = 2 \| (U_0,V_0) \|_{\alpha},
    \end{equation}
    where $\alpha$ is chosen as above. Since the norms $\| \cdot \|_{\alpha}$ are equivalent with the standard norm on $L_{\mathcal{F}}^2(\Omega,C([0,T];K)) \times L_{\mathcal{F}}^2 (\Omega \times [0,T]; L_2(\Xi,K))$, this together with \eqref{a_priori_zero} concludes the proof.
\end{proof}

Now, we are interested in equation \eqref{general_bsde} when the unbounded operator $\mathcal{G}$ is replaced by its Yosida approximation, i.e.,
\begin{equation}\label{general_bsde_yosida}
\begin{cases}
    -\mathrm{d}U^{\lambda}(t) = \left [ \mathcal{G}_{\lambda} U^{\lambda}(t) + F(t,U^{\lambda}(t),V^{\lambda}(t),\mathcal{L}(U^{\lambda}(t)),\mathcal{L}(V^{\lambda}(t))) \right ] \mathrm{d}t - V^{\lambda}(t) \mathrm{d}W(t),\quad t\in [0,T]\\
    U^{\lambda}(T) =  \xi.
\end{cases}
\end{equation}

Note that for this equation, since $\mathcal{G}_{\lambda}$ is bounded, the notion of mild solution and classical solution coincide.

\begin{theorem}\label{theorem_general_bsde_yosida}
    Let Assumption \ref{assumption_general_bsde} be satisfied. Then, it holds
    \begin{equation}
        \lim_{\lambda\to\infty} \mathbb{E} \left [ \sup_{t\in [0,T]} \| U^{\lambda}(t) - U(t) \|_K^2 \right ] = 0,\quad \lim_{\lambda\to\infty} \mathbb{E} \left [ \int_0^T \| V^{\lambda}(t) - V(t) \|_{L_2(\Xi,K)}^2 \mathrm{d}t \right ] = 0.
    \end{equation}
\end{theorem}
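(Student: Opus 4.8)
The plan is to reduce the statement to the uniform-in-$\lambda$ estimates already contained in the proof of Theorem \ref{BSDE_existence_of_solution}, and to isolate the genuinely $\lambda$-dependent contribution into a term that only involves the fixed limiting solution $(U,V)$. The starting observation is that, since $\mathcal{G}$ generates a $C_0$-semigroup, its Yosida approximations satisfy a uniform bound $\|e^{t\mathcal{G}_\lambda}\|_{L(K)}\le M e^{\omega t}$ for all $t\in[0,T]$ and all sufficiently large $\lambda$, and $e^{t\mathcal{G}_\lambda}h\to e^{t\mathcal{G}}h$ as $\lambda\to\infty$ for every $h\in K$, uniformly in $t\in[0,T]$. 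Consequently the contraction constant and the constant in the a priori estimate \eqref{a_priori_estimate_bsde} can be chosen independently of $\lambda$; in particular the exponent $\alpha=4C$ used in the proof of Theorem \ref{BSDE_existence_of_solution} works simultaneously for \eqref{general_bsde} and \eqref{general_bsde_yosida}.

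First I would subtract the two mild formulations \eqref{general_bsde_mild_solution_biba}. Writing $\Delta U=U^\lambda-U$ and $\Delta V=V^\lambda-V$, this gives $\Delta U(t)=R^\lambda(t)+\int_t^T e^{(s-t)\mathcal{G}_\lambda}\big(F(s,U^\lambda(s),V^\lambda(s),\mathcal{L}(U^\lambda(s)),\mathcal{L}(V^\lambda(s)))-F(s,U(s),V(s),\mathcal{L}(U(s)),\mathcal{L}(V(s)))\big)\,\mathrm{d}s-\int_t^T e^{(s-t)\mathcal{G}_\lambda}\Delta V(s)\,\mathrm{d}W(s)$, where the inhomogeneity $R^\lambda(t)=\big(e^{(T-t)\mathcal{G}_\lambda}-e^{(T-t)\mathcal{G}}\big)\xi+\int_t^T\big(e^{(s-t)\mathcal{G}_\lambda}-e^{(s-t)\mathcal{G}}\big)F(s,U(s),V(s),\mathcal{L}(U(s)),\mathcal{L}(V(s)))\,\mathrm{d}s-\int_t^T\big(e^{(s-t)\mathcal{G}_\lambda}-e^{(s-t)\mathcal{G}}\big)V(s)\,\mathrm{d}W(s)$ collects the semigroup differences and depends on $\lambda$ only through $e^{\cdot\mathcal{G}_\lambda}$, all other data being the fixed limit solution. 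The pair $(\Delta U,\Delta V)$ is thus the unique mild solution, with generator $\mathcal{G}_\lambda$, of a linear McKean--Vlasov BSDE with Lipschitz source $F(\cdot,U^\lambda,\dots)-F(\cdot,U,\dots)$ perturbed by the given process $R^\lambda$.

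Next I would show $\mathbb{E}[\sup_{t\in[0,T]}\|R^\lambda(t)\|_K^2]\to 0$. The deterministic-type terms, namely the terminal term and the Bochner integral, converge to $0$ by the strong convergence $e^{\cdot\mathcal{G}_\lambda}\to e^{\cdot\mathcal{G}}$ together with the uniform bound and Lebesgue's dominated convergence theorem. For the backward stochastic convolution I would apply the maximal inequality \cite[Lemma 3.3]{gawarecki_mandrekar_2011}, which bounds its $L^2$-norm by $C\,\mathbb{E}[\int_0^T\|(e^{(s-t)\mathcal{G}_\lambda}-e^{(s-t)\mathcal{G}})V(s)\|_{L_2(\Xi,K)}^2\,\mathrm{d}s]$, and then pass to the limit by dominated convergence, using that each $V(s)$ is Hilbert--Schmidt so that strong convergence of the semigroups yields $\|(e^{r\mathcal{G}_\lambda}-e^{r\mathcal{G}})V(s)\|_{L_2(\Xi,K)}\to 0$. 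With $R^\lambda$ controlled, I would close the estimate exactly as in Step 2 of Theorem \ref{BSDE_existence_of_solution}: applying It\^o's formula to the exponentially weighted norm in the spirit of \cite[Lemma 2.1]{hu_peng_1991} and \cite[eqs. (4.13)--(4.14)]{guatteri_tessitore_2005}, using the Lipschitz property of $F$ and the bounds $\mathbf{d}_2(\mathcal{L}(U^\lambda(t)),\mathcal{L}(U(t)))\le \mathbb{E}[\|\Delta U(t)\|_K^2]^{1/2}$ and its analogue for $V$, I obtain $\|(\Delta U,\Delta V)\|_\alpha^2\le(C/\alpha)\|(\Delta U,\Delta V)\|_\alpha^2+C\,\mathbb{E}[\sup_{t}\|R^\lambda(t)\|_K^2]$. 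Choosing $\alpha$ large enough to absorb the first term on the right into the left gives $\|(\Delta U,\Delta V)\|_\alpha^2\le C\,\mathbb{E}[\sup_{t}\|R^\lambda(t)\|_K^2]\to 0$, which by equivalence of the norms is the claim.

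The main obstacle is the backward stochastic convolution in $R^\lambda$: its integrand $(e^{(s-t)\mathcal{G}_\lambda}-e^{(s-t)\mathcal{G}})V(s)$ depends on the running time $t$ through the kernel, so the supremum over $t$ cannot be handled by the It\^o isometry alone and requires the maximal inequality for stochastic convolutions. Moreover, since the Yosida semigroups converge only strongly and not in operator norm, the passage to the limit must be carried out in the Hilbert--Schmidt norm and relies essentially on $V(s)\in L_2(\Xi,K)$ together with dominated convergence; the mean-field coupling itself is harmless, as the Wasserstein terms are dominated by the corresponding $L^2$-norms and absorbed by the same exponential-weight argument.
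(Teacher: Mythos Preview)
Your overall strategy---subtract the two mild equations and reduce to a remainder $R^\lambda$ depending only on the limit solution---is natural, but the closing estimate has a gap. The process $R^\lambda(t)$ is \emph{not} $(\mathcal{F}_t)$-adapted: its first summand $(e^{(T-t)\mathcal{G}_\lambda}-e^{(T-t)\mathcal{G}})\xi$ is only $\mathcal{F}_T$-measurable. Hence $(\Delta U-R^\lambda,\Delta V)$ is not an adapted pair, and the a~priori estimates of \cite[Lemma~2.1]{hu_peng_1991} and \cite[(4.13)--(4.14)]{guatteri_tessitore_2005}---which are obtained via It\^o's formula for adapted BSDE solutions and rely on the martingale property of the stochastic integral---do not apply to the identity you wrote. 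Equivalently, $\Delta U=U^\lambda-U$ itself does not satisfy an SDE to which It\^o's formula can be applied, since $U$ is only a mild solution with the unbounded generator $\mathcal{G}$. Without this energy identity you cannot get $\int_0^T\|\Delta V\|^2$ on the left-hand side, and the argument does not close. A second, smaller issue is that \cite[Lemma~3.3]{gawarecki_mandrekar_2011} treats \emph{forward} stochastic convolutions with a single semigroup kernel; the backward convolution with the kernel $e^{(s-t)\mathcal{G}_\lambda}-e^{(s-t)\mathcal{G}}$ (which is not a semigroup) is not covered, and your displayed bound still contains the running variable $t$.

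Both issues disappear if, instead of the non-adapted $R^\lambda$, you isolate the \emph{adapted} pair $\Phi_\lambda(U,V)-(U,V)=(\hat U^\lambda-U,\hat V^\lambda-V)$, where $(\hat U^\lambda,\hat V^\lambda)$ is the mild solution with generator $\mathcal{G}_\lambda$ of the BSDE with fixed source $F(\cdot,U,V,\mathcal{L}(U),\mathcal{L}(V))$ and terminal $\xi$. Then $(U^\lambda-\hat U^\lambda,V^\lambda-\hat V^\lambda)$ is an adapted BSDE pair with generator $\mathcal{G}_\lambda$ and source $F^\lambda-F$, to which the uniform contraction estimate of Step~2 of Theorem~\ref{BSDE_existence_of_solution} applies verbatim, yielding $\|(\Delta U,\Delta V)\|_\alpha\le 2\|(\hat U^\lambda-U,\hat V^\lambda-V)\|_\alpha$; the right-hand side tends to zero by the non--mean-field Yosida convergence from \cite{guatteri_tessitore_2005}. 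This triangulation is exactly the parameter-dependent contraction principle \cite[Theorem~10.1]{zabczyk_1999} that the paper invokes, so the paper's one-line argument is the clean way to repair your approach.
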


\begin{proof}
    The result follows from the parameter-dependent contraction principle \cite[Theorem 10.1]{zabczyk_1999}, exactly as in \cite{guatteri_tessitore_2005}. Indeed, the maps $\Phi_{\lambda}$ that we obtain in place of $\Phi$ in the second step of the proof of Theorem A.2 when $\mathcal{G}$ is replaced by $\mathcal{G}_{\lambda}$, turn out to be contractions for the same $\alpha$.
\end{proof}

\section{Operator-Valued Stochastic Differential Equations}\label{appendix_Operator_Valued_BSDE}

In this appendix, let us collect a few results about operator-valued (backward) SDEs, some of which are taken from \cite{guatteri_tessitore_2005}. We work in the same, more general framework, as in Appendix \ref{appendix_McKean_Vlasov_BSDE}. Moreover, let $L_2^{\text{sym}}(K) \subset L_2(K)$ be the subset of symmetric Hilbert--Schmidt operators.

First, we consider the forward equation
\begin{equation}\label{forward_stochastic_riccati}
\begin{cases}
    \mathrm{d}\mathcal{Y}(t) = [ \mathcal{G} \mathcal{Y}(t) + \mathcal{Y}(t) \mathcal{G}^* + A_{\texttt{\#}}(t) \mathcal{Y}(t) + \mathcal{Y}(t) A^*_{\texttt{\#}}(t) + \text{Tr} ( C(t) \mathcal{Y}(t) C^*(t) ) + S(t) ] \mathrm{d}t\\
    \qquad\qquad\qquad + [ C(t) \mathcal{Y}(t) + \mathcal{Y}(t) C^*(t) + T(t) ] \mathrm{d}W(t)\\
    \mathcal{Y}(0) = N \in L_2^{\text{sym}}(K),
\end{cases}
\end{equation}
where we impose the following assumptions on the coefficients.

\begin{assumption}\label{assumption_forward_operator_equation}
    \begin{enumerate}[label=(\roman*)]
        \item $\mathcal{G}: \mathcal{D}(\mathcal{G}) \subset K \to K$ is an unbounded operator that generates a $C_0$-semigroup $(e^{t\mathcal{G}})_{t\geq 0}$.
        \item $A_{\texttt{\#}} : [0,T] \times \Omega \to L(K)$ and $C : [0,T]\times \Omega \to L(K,L_2(\Xi,K))$ are essentially bounded in $(t,\omega)\in [0,T]\times \Omega$, strongly measurable and predictable.
        \item $S\in L_{\mathcal{F}}^2([0,T]\times \Omega; L_2^{\text{sym}}(K))$ and $T\in L_{\mathcal{F}}^2([0,T]\times\Omega; L_2(\Xi,L_2^{\text{sym}}(K)))$ are predictable.
    \end{enumerate}
\end{assumption}

\begin{remark}\label{remark_trace_term_appendix}
    Note that for $C: [0,T]\times\Omega\to L(K,L_2(\Xi,K))$ and an orthonormal basis $(\xi_i)_{i\in\mathbb{N}}$ of $\Xi$, we have for $k\in K$
    \begin{equation}
        C(t)(k) = \sum_{i=1}^{\infty} C_i(t)(k) \langle \xi_i,\cdot \rangle_{\Xi}
    \end{equation}
    where $C_i : [0,T]\times\Omega \to L(K)$ is given by
    \begin{equation}
        C_i(t)(k) := C(t)(k) \xi_i,
    \end{equation}
    which draws the connection with \cite[Hypothesis 2.1 (A3)]{guatteri_tessitore_2005}. Moreover, the trace term in equation \eqref{forward_stochastic_riccati} is understood as
    \begin{equation}
        \text{Tr}(C(t) \mathcal{Y}(t) C^*(t)) = \sum_{i=1}^{\infty} C_i(t) \mathcal{Y}(t) C_i^*(t).
    \end{equation}
\end{remark}

\begin{definition}\label{definition_mild_solution_forward_equation}
    A process $\mathcal{Y} \in L^2_{\mathcal{F}}(\Omega; C([0,T];L_2^{\text{sym}}(K)))$ is a mild solution of equation \eqref{forward_stochastic_riccati} if
    \begin{equation}
    \begin{split}
        \mathcal{Y}(t) &= e^{t\mathcal{G}}N e^{t\mathcal{G}^*} + \int_0^t e^{s\mathcal{G}} S(s) e^{s\mathcal{G}^*} \mathrm{d}s + \int_0^t e^{s\mathcal{G}} ( A_{\texttt{\#}}(s) \mathcal{Y}(s) + \mathcal{Y}(s) A^*_{\texttt{\#}}(s)) e^{s\mathcal{G}^*} \mathrm{d}s + \int_0^t e^{s\mathcal{G}} \text{Tr} \left ( C(s) \mathcal{Y}(s) C^*(s) \right ) e^{s\mathcal{G}^*} \mathrm{d}s\\
        &\quad + \int_0^t e^{s\mathcal{G}} ( C(s) \mathcal{Y}(s) + \mathcal{Y}(s) C^*(s) + T(s) ) e^{s\mathcal{G}^*} \mathrm{d}W(s)
    \end{split}
    \end{equation}
    for every $t\in [0,T]$, $\mathbb{P}$-almost surely.
\end{definition}
We have the following result.
\begin{theorem}\label{theorem_yosida_approximation_forward_equation}
    Let Assumption \ref{assumption_forward_operator_equation} be satisfied. Then, for every $N\in L_2^{\text{sym}}(K)$, equation \eqref{forward_stochastic_riccati} has a unique mild solution in the sense of Definition \ref{definition_mild_solution_forward_equation}. Moreover, if $\mathcal{G}$ is replaced by its Yosida approximation $\mathcal{G}_{\lambda}$, equation \eqref{forward_stochastic_riccati} has a unique classical solution $\mathcal{Y}^{\lambda} \in L^2_{\mathcal{F}}(\Omega; C([0,T];L_2^{\text{sym}}(K)))$ and
    \begin{equation}
        \lim_{\lambda\to\infty} \mathbb{E} \left [ \sup_{t\in [0,T]} \| \mathcal{Y}^{\lambda} - \mathcal{Y}(t) \|_{L_2(K)}^2 \right ] = 0.
    \end{equation}
\end{theorem}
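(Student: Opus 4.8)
The plan is to follow the strategy used for the operator-valued stochastic Riccati equations in \cite{guatteri_tessitore_2005}, observing that equation \eqref{forward_stochastic_riccati} is the \emph{forward}, \emph{linear} counterpart of those equations: there is no quadratic (Riccati) nonlinearity, so the argument simplifies. The starting point is that the family of maps $\mathcal{Y}\mapsto e^{t\mathcal{G}}\mathcal{Y}e^{t\mathcal{G}^*}$ defines a $C_0$-semigroup on $L_2(K)$ (indeed on $L_2^{\text{sym}}(K)$), since $\mathcal{G}$ generates a $C_0$-semigroup by Assumption \ref{assumption_forward_operator_equation}(i); this is the semigroup with respect to which the mild formulation of Definition \ref{definition_mild_solution_forward_equation} is taken. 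I would then establish existence and uniqueness by a fixed-point argument on $L^2_{\mathcal{F}}(\Omega;C([0,T];L_2^{\text{sym}}(K)))$ equipped with the exponentially weighted norm $\mathbb{E}[\sup_{t\in[0,T]} e^{2\alpha t}\|\mathcal{Y}(t)\|_{L_2(K)}^2]$, mirroring Steps 1--2 in the proof of Theorem \ref{BSDE_existence_of_solution}. Given a process $\mathcal{Y}'$, I freeze the $\mathcal{Y}$-dependent contributions $A_{\texttt{\#}}\mathcal{Y}'+\mathcal{Y}' A_{\texttt{\#}}^*$, $\text{Tr}(C\mathcal{Y}' C^*)$ and $C\mathcal{Y}'+\mathcal{Y}' C^*$, which by Assumption \ref{assumption_forward_operator_equation}(ii)--(iii) lie in $L^2_{\mathcal{F}}([0,T]\times\Omega;L_2^{\text{sym}}(K))$ together with $S$ and $T$; the resulting inhomogeneous linear forward equation has a unique mild solution $\Phi(\mathcal{Y}')$ by \cite[Theorem 4.4]{guatteri_tessitore_2005}.

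Next I would estimate $\Phi(\mathcal{Y}_1')-\Phi(\mathcal{Y}_2')$ in this weighted norm. Using the a priori estimates of \cite[equations (4.13)--(4.14)]{guatteri_tessitore_2005} together with the essential boundedness of $A_{\texttt{\#}}$ and $C$, the differences of the frozen terms are controlled linearly by $\|\mathcal{Y}_1'-\mathcal{Y}_2'\|$, producing a contraction constant of the form $C/\alpha$ with $C$ depending only on $\mathcal{G}$, the bounds on $A_{\texttt{\#}},C$, and $T$. Choosing $\alpha$ large enough, $\Phi$ becomes a contraction, and Banach's fixed point theorem yields the unique mild solution. For symmetry, I would note that $N$, $S$ and $T$ are symmetric-valued, that the sandwiched semigroup and each frozen term map symmetric operators to symmetric operators, so $\Phi$ leaves the closed subspace of $L_2^{\text{sym}}(K)$-valued processes invariant; hence the fixed point takes values in $L_2^{\text{sym}}(K)$.

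For the Yosida approximation, when $\mathcal{G}$ is replaced by its bounded approximation $\mathcal{G}_\lambda$, the notions of mild and classical solution coincide, so for each $\lambda$ there is a unique classical solution $\mathcal{Y}^\lambda$, obtained as the fixed point of the corresponding map $\Phi_\lambda$. The convergence $\mathbb{E}[\sup_{t\in[0,T]}\|\mathcal{Y}^\lambda-\mathcal{Y}(t)\|_{L_2(K)}^2]\to 0$ then follows from the parameter-dependent contraction principle \cite[Theorem 10.1]{zabczyk_1999}, exactly as in the proof of Theorem \ref{theorem_general_bsde_yosida}: the contraction constants of the $\Phi_\lambda$ depend only on the uniform bounds on $A_{\texttt{\#}},C,T$ and on the (uniform in $\lambda$) growth of the semigroups $e^{t\mathcal{G}_\lambda}$, so a single choice of $\alpha$ makes all $\Phi_\lambda$ contractions, while $\Phi_\lambda\to\Phi$ pointwise because $e^{t\mathcal{G}_\lambda}\to e^{t\mathcal{G}}$ strongly, uniformly on $[0,T]$.

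The main obstacle is the careful treatment of the trace term $\text{Tr}(C(t)\mathcal{Y}(t)C^*(t))$, understood as in Remark \ref{remark_trace_term_appendix}: one must verify that $\mathcal{Y}\mapsto \text{Tr}(C\mathcal{Y}C^*)$ is a bounded linear map on $L_2^{\text{sym}}(K)$ with norm controlled by $\|C\|^2$ and that it preserves symmetry, so that it contributes a genuinely Lipschitz perturbation that is absorbed by the weighting and does not destroy the structural properties. Once these estimates are in place, the remainder is a routine adaptation of the linear fixed-point and Yosida-approximation machinery of \cite{guatteri_tessitore_2005}.
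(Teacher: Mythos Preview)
Your proposal is correct and follows essentially the same approach as the paper: recast \eqref{forward_stochastic_riccati} as a standard linear SDE on the Hilbert space $L_2^{\text{sym}}(K)$ driven by the sandwich semigroup $e^{t\hat{\mathcal{G}}}X = e^{t\mathcal{G}}Xe^{t\mathcal{G}^*}$, then invoke the usual existence/uniqueness and Yosida-convergence machinery from \cite{guatteri_tessitore_2005}. The paper is terser---it cites \cite[Lemma 5.1]{guatteri_tessitore_2005} for the $C_0$-property of $(e^{t\hat{\mathcal{G}}})_{t\ge 0}$ and then \cite[Theorem 3.3]{guatteri_tessitore_2005} directly for the forward well-posedness and approximation---whereas you spell out the weighted-norm contraction and the parameter-dependent fixed-point argument explicitly; one small wrinkle is that your references to \cite[Theorem 4.4, (4.13)--(4.14)]{guatteri_tessitore_2005} are backward-equation estimates, so for the forward direction the appropriate citation is \cite[Theorem 3.3]{guatteri_tessitore_2005}, as the paper uses.
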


\begin{proof}
    As in \cite[Section 5.1]{guatteri_tessitore_2005}, we introduce the family $(e^{t\hat{\mathcal{G}}})_{t\geq 0}$ of linear operators $e^{t\hat{\mathcal{G}}} : L(K) \to L(K)$ by setting
    \begin{equation}
        e^{t\hat{\mathcal{G}}} X := e^{t\mathcal{G}} X e^{t\mathcal{G}^*}, \quad t\geq 0, \quad X\in L(K).
    \end{equation}
    By \cite[Lemma 5.1]{guatteri_tessitore_2005}, if we restrict it to $L_2^{\text{sym}}(K)$, this family is a $C_0$-semigroup. Let us denote by $\hat{\mathcal{G}} : \mathcal{D}(\hat{\mathcal{G}}) \subset L_2^{\text{sym}}(K) \to L_2^{\text{sym}}(K)$ its infinitesimal generator in $L_2^{\text{sym}}(K)$. Then, we have
    \begin{equation}
        \langle \hat{\mathcal{G}} Xx,y \rangle_K = \langle Xx,\mathcal{G}y \rangle_K + \langle X\mathcal{G}x,y \rangle_K, \quad X\in \mathcal{D}(\hat{\mathcal{G}}), \quad x,y\in \mathcal{D}(\mathcal{G}).
    \end{equation}
    Using this setup, the result follows from standard results regarding Yosida approximations and the convergence of the solutions of the corresponding SDEs, see \cite[Theorem 3.3]{guatteri_tessitore_2005}.
\end{proof}

Now, let us consider the BSDE
\begin{equation}\label{backward_stochatic_riccati}
\begin{cases}
    -\mathrm{d}P(t) = [ \mathcal{G}^* P(t) + P(t) \mathcal{G} + A^*_{\texttt{\#}}(t)P(t) + P(t) A_{\texttt{\#}}(t)\\
    \qquad\qquad\qquad + \text{Tr} \left ( C^*(t) Q(t) + Q(t) C(t) + C^*(t) P(t) C(t) \right ) + S(t) ] \mathrm{d}t - Q(t) \mathrm{d}W(t)\\
    P(T) = M\in L_2^{\text{sym}}(K).
\end{cases}
\end{equation}

\begin{definition}\label{definition_mild_solution_backward_riccati}
    A pair of processes $(P,Q) \in L^2_{\mathcal{F}}(\Omega; C([0,T];L_2^{\text{sym}}(K))) \times L^2_{\mathcal{F}}(\Omega\times [0,T]; L_2(\Xi,L_2^{\text{sym}}(K)))$ is a mild solution of equation \eqref{backward_stochatic_riccati} if
    \begin{equation}
    \begin{split}
        P(t) &= e^{(T-t)\mathcal{G}^*} M e^{(T-t)\mathcal{G}} + \int_t^T e^{(s-t)\mathcal{G}^*} S(s) e^{(s-t)\mathcal{G}} \mathrm{d}s + \int_t^T e^{(s-t)\mathcal{G}^*} A^*_{\texttt{\#}}(s) P(s) A_{\texttt{\#}}(s) e^{(s-t)\mathcal{G}} \mathrm{d}s\\
        &\quad + \int_t^T e^{(s-t)\mathcal{G}^*} \text{Tr} \left ( C^*(s) P(s) C(s) + C^*(s) Q(s) + Q(s) C(s) \right ) e^{(s-t)\mathcal{G}} \mathrm{d}s\\
        &\quad + \int_t^T e^{(s-t)\mathcal{G}^*} Q(s) e^{(s-t)\mathcal{G}} \mathrm{d}W(s)
    \end{split}
    \end{equation}
    for all $t\in [0,T]$, $\mathbb{P}$-almost surely.
\end{definition}

We recall the following result from \cite[Theorem 5.4]{guatteri_tessitore_2005}.

\begin{theorem}\label{theorem_yosida_approximation_backward_Riccati}
    Let Assumption \ref{assumption_forward_operator_equation} be satisfied. Then, for every $\mathcal{F}_T$-measurable $M\in L^2(\Omega;L_2^{\text{sym}}(K))$, equation \eqref{backward_stochatic_riccati} has a unique mild solution in the sense of Definition \ref{definition_mild_solution_backward_riccati}. Moreover, if $\mathcal{G}$ is replaced by its Yosida approximation $\mathcal{G}_{\lambda}$, equation \eqref{backward_stochatic_riccati} has a unique classical solution $(P^{\lambda},Q^{\lambda}) \in L_{\mathcal{F}}^2(\Omega; C([0,T];L_2^{\text{sym}}(K))) \times L_{\mathcal{F}}^2(\Omega \times [0,T]; L_2(\Xi, L_2^{\text{sym}}(K)))$, and
    \begin{equation}
        \lim_{\lambda\to\infty} \mathbb{E} \left [ \sup_{t\in [0,T]} \| P^{\lambda}(t) - P(t) \|^2_{L_2(K)} \right ] = 0, \quad \lim_{\lambda\to\infty} \mathbb{E} \left [ \int_0^T \| Q^{\lambda}(t) - Q(t) \|^2_{L_2(\Xi , L_2(K))} \mathrm{d}t \right ] = 0.
    \end{equation}
\end{theorem}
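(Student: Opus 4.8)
The plan is to recast the operator-valued backward equation \eqref{backward_stochatic_riccati} as an abstract backward stochastic evolution equation posed in the separable Hilbert space $L_2^{\text{sym}}(K)$, and then to run exactly the fixed-point and Yosida-approximation scheme already developed for Hilbert-space-valued BSDEs in Appendix \ref{appendix_McKean_Vlasov_BSDE}. First, as in \cite[Section 5.1]{guatteri_tessitore_2005} and in the proof of Theorem \ref{theorem_yosida_approximation_forward_equation}, I would introduce the adjoint sandwich family $\hat{\mathcal{S}}(t) X := e^{t\mathcal{G}^*} X e^{t\mathcal{G}}$, $X\in L_2^{\text{sym}}(K)$, which by \cite[Lemma 5.1]{guatteri_tessitore_2005} is a $C_0$-semigroup on $L_2^{\text{sym}}(K)$; denote its generator by $\hat{\mathcal{G}}$. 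With this identification, the integral identity in Definition \ref{definition_mild_solution_backward_riccati} is precisely the variation-of-constants formula for the abstract BSDE driven by $\hat{\mathcal{G}}$.

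Next I would read off the driver
\[
    f(t,P,Q) := A^*_{\texttt{\#}}(t) P + P A_{\texttt{\#}}(t) + \text{Tr}\left( C^*(t) Q + Q C(t) + C^*(t) P C(t) \right) + S(t),
\]
and note that, crucially, it is \emph{affine} (Lyapunov-type, not genuinely Riccati) in $(P,Q)$. The structural checks are then: (i) $f$ maps $L_2^{\text{sym}}(K)\times L_2(\Xi,L_2^{\text{sym}}(K))$ into $L_2^{\text{sym}}(K)$, each summand preserving symmetry by construction and remaining Hilbert--Schmidt because composing a Hilbert--Schmidt operator with the essentially bounded $A_{\texttt{\#}}(t),C(t)$ of Assumption \ref{assumption_forward_operator_equation} stays Hilbert--Schmidt, the trace term being read as in Remark \ref{remark_trace_term_appendix}; (ii) $f$ is Lipschitz in $(P,Q)$ for the $L_2$-norms, with constant governed by the essential bounds on $\|A_{\texttt{\#}}(t)\|_{L(K)}$ and $\|C(t)\|^2$; and (iii) $f(\cdot,0,0)=S(\cdot)\in L^2_{\mathcal{F}}([0,T]\times\Omega;L_2^{\text{sym}}(K))$. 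Given a $C_0$-semigroup generator $\hat{\mathcal{G}}$ and an affine Lipschitz driver, existence and uniqueness of a mild solution $(P,Q)$ then follow from the same contraction argument as in Theorem \ref{BSDE_existence_of_solution} (here stripped of the measure-dependence), equivalently from \cite[Theorem 4.4]{guatteri_tessitore_2005} or \cite[Theorem 3.1]{hu_peng_1991}.

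For the Yosida step I would replace $\mathcal{G}$ by $\mathcal{G}_{\lambda}$, so the sandwich semigroups become $\hat{\mathcal{S}}_{\lambda}(t) X = e^{t\mathcal{G}_{\lambda}^*} X e^{t\mathcal{G}_{\lambda}}$ with \emph{bounded} generators, for which the mild and classical solutions $(P^{\lambda},Q^{\lambda})$ coincide. The convergences $P^{\lambda}\to P$ in $C([0,T];L_2(K))$ and $Q^{\lambda}\to Q$ in $L^2$ are obtained exactly as in Theorem \ref{theorem_general_bsde_yosida} via the parameter-dependent contraction principle \cite[Theorem 10.1]{zabczyk_1999}: the fixed-point maps $\Phi_{\lambda}$ are contractions with a common constant for a single choice of the exponential weight $\alpha$, while the data depend continuously on $\lambda$ through the strong convergence $\mathcal{G}_{\lambda}\to\mathcal{G}$, hence $\hat{\mathcal{S}}_{\lambda}\to\hat{\mathcal{S}}$ strongly, so the fixed points converge.

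I expect the main obstacle to be structural rather than probabilistic: verifying that $\hat{\mathcal{S}}$ is genuinely a $C_0$-semigroup on $L_2^{\text{sym}}(K)$ and that the trace term $\text{Tr}(C^*(t) P C(t))$ is a well-defined, symmetric, Hilbert--Schmidt, and Lipschitz function of $P$. Since the coefficients act by two-sided operator composition rather than as a bounded additive perturbation, one must control $\|A^*_{\texttt{\#}}(t) P + P A_{\texttt{\#}}(t)\|_{L_2(K)}$ and, more delicately, the infinite trace series $\sum_i C_i^*(t) P C_i(t)$ in the Hilbert--Schmidt norm uniformly in $(t,\omega)$; these are precisely the estimates underlying \cite[Lemma 5.1]{guatteri_tessitore_2005} and \cite[Theorem 5.4]{guatteri_tessitore_2005}. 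Once these bounds are secured, the remainder is the standard Banach fixed-point and Yosida-limit argument already used throughout the appendix, and indeed the statement is recalled verbatim from \cite[Theorem 5.4]{guatteri_tessitore_2005}.
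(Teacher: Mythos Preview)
Your proposal is correct and matches the paper's treatment exactly: the paper does not give a self-contained proof but simply recalls the result from \cite[Theorem 5.4]{guatteri_tessitore_2005}, and the argument you sketch---lifting to the sandwich semigroup on $L_2^{\text{sym}}(K)$, checking the affine driver is Lipschitz in the Hilbert--Schmidt norm, and running the same contraction/Yosida scheme as in Theorems \ref{BSDE_existence_of_solution} and \ref{theorem_general_bsde_yosida}---is precisely the content of that reference. Your identification of the delicate points (strong continuity of $\hat{\mathcal{S}}$ on $L_2^{\text{sym}}(K)$ and the Hilbert--Schmidt bound on the trace term) is also accurate, and these are handled in \cite[Lemma 5.1 and Theorem 5.4]{guatteri_tessitore_2005} as you note.
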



\begin{thebibliography}{99}

\bibitem{agram_oksendal_2019}
N. Agram and B. {\O}ksendal,
\textit{Stochastic control of memory mean-field processes}, Appl. Math. Optim. 79 (2019), no. 1, 181--204.

\bibitem{ambrosio_gigli_savare_2005}
L. Ambrosio, N. Gigli and G. Savare,
\textit{Gradient flows in metric spaces and in the space of probability measures}, Lectures in Mathematics ETH Z\"{u}rich, Birkh\"{a}user Verlag, Basel, 2005.

\bibitem{andersson_djehiche_2011}
D. Andersson and B. Djehiche,
\textit{A maximum principle for {SDE}s of mean-field type}, Appl. Math. Optim. 63 (2011), no. 3, 341--356.

%\bibitem{bensoussan_daprato_delfour_mitter_2007}
%A. Bensoussan, G. Da Prato, M. C. Delfour and S. K. Mitter,
%\textit{Representation and control of infinite dimensional systems}, Systems Control Found. Appl., Birkh\"auser Boston, Inc., Boston, MA, 2007.

\bibitem{buckdahn_li_li_xing_2025}
R. Buckdahn, J. Li, J. Li and C. Xing,
\textit{Path-dependent controlled mean-field coupled forward-backward {SDE}s: the associated stochastic maximum principle}, SIAM J. Control Optim. 63 (2025), no. 3, 2124--2153.

\bibitem{buckdahn_li_ma_2016}
R. Buckdahn, J. Li and J. Ma,
\textit{A stochastic maximum principle for general mean-field systems}, Appl. Math. Optim. 74 (2016), no. 3, 507--534.

\bibitem{buckdahn_li_peng_2009}
R. Buckdahn, J. Li and S. Peng,
\textit{Mean-field backward stochastic differential equations and related partial differential equations}, Stochastic Process. Appl. 119 (2009), no. 10, 3133--3154.

\bibitem{buckdahn_djehiche_li_2011}
R. Buckdahn, B. Djehiche and J. Li, 
\textit{A general stochastic maximum principle for {SDE}s of mean-field type}, Appl. Math. Optim. 64 (2011), no. 2, 197--216.

%\bibitem{buckdahn_li_li_wang_2024}
%R. Buckdahn, J. Li, Y. Li and Y. Wang,
%\textit{A global stochastic maximum principle for mean-field forward-backward stochastic control systems with quadratic generators}, arXiv preprint, \url{https://arxiv.org/abs/2404.06826} (2024).

%\bibitem{buckdahn_li_peng_2017}
%R. Buckdahn, J. Li, S. Peng and C. Rainer,
%\textit{Mean-field stochastic differential equations and associated {PDE}s}, Ann. Probab. 45 (2017), no. 2, 824--878.

%\bibitem{carmona_2016}
%R. Carmona,
%\textit{Lectures on {BSDE}s, stochastic control, and stochastic differential games with financial applications}, Financial Mathematics 1, Society for Industrial and Applied Mathematics (SIAM), Philadelphia, PA, 2016.

\bibitem{carmona_delarue_2015}
R. Carmona and F. Delarue,
\textit{Forward-backward stochastic differential equations and controlled {M}c{K}ean-{V}lasov dynamics}, Ann. Probab. 43 (2015), no. 5, 2647--2700.

\bibitem{carmona_delarue_2018}
R. Carmona and F. Delarue,
\textit{Probabilistic theory of mean field games with applications. {I}: {M}ean field {FBSDE}s, control, and games}, Probab. Theory Stoch. Model. 83, Springer, Cham, 2018.

\bibitem{chen_wu_2010}
L. Chen and Z. Wu,
\textit{Maximum principle for the stochastic optimal control problem with delay and application}, Automatica J. IFAC 46 (2010), no. 6, 1074--1080. 

\bibitem{cosso_gozzi_kharroubi_pham_rosestolato_2023}
A. Cosso, F. Gozzi, I. Kharroubi, H. Pham and M. Rosestolato,
\textit{Optimal control of path-dependent {M}c{K}ean-{V}lasov {SDE}s in infinite-dimension}, Ann. Appl. Probab. 33 (2023), no. 4, 2863--2918.

%\bibitem{daprato_zabczyk_2014}
%G. Da Prato and J. Zabczyk,
%\textit{Stochastic equations in infinite dimensions}, Encyclopedia of Mathematics and its Applications 152, 2nd Edition, Cambridge University Press, Cambridge, 2014.

\bibitem{defeo_gozzi_swiech_wessels_2025}
F. de Feo, F. Gozzi, A. \'{S}wi\k{e}ch and L. Wessels,
\textit{Stochastic optimal control of interacting particle systems in {H}ilbert spaces and applications}, arXiv preprint, \url{https://arxiv.org/abs/2511.21646} (2025).

\bibitem{djehiche_gozzi_zanco_zanella_2022}
B. Djehiche, F. Gozzi, G. Zanco and M. Zanella,
\textit{Optimal portfolio choice with path dependent benchmarked labor income: a mean field model}, Stochastic Process. Appl. 145 (2022), 48--85.

\bibitem{du_meng_2013}
K. Du and Q. Meng,
\textit{A maximum principle for optimal control of stochastic evolution equations}, SIAM J. Control Optim. 51 (2013), no. 6, 4343--4362.

\bibitem{dumitrescu_oksendal_sulem_2018}
R. Dumitrescu, B. {\O}ksendal and A. Sulem,
\textit{Stochastic control for mean-field stochastic partial differential equations with jumps}, J. Optim. Theory Appl. 176 (2018), no. 3, 559--584.

%\bibitem{fabbri_gozzi_swiech_2017}
%G. Fabbri, F. Gozzi and A. \'{S}wi\k{e}ch,
%\textit{Stochastic optimal control in infinite dimension: Dynamic programming and HJB equations}, with a contribution by Marco Fuhrman and Gianmario Tessitore, Probab. Theory Stoch. Model. 82, Springer, Cham, 2017.

%\bibitem{federico_gozzi_ghilli_2024}
%S. Federico, F. Gozzi and D. Ghilli,
%\textit{Linear-Quadratic Mean Field Games in Hilbert spaces}, arXiv preprint, \url{https://arxiv.org/abs/2402.14935} (2024).

%\bibitem{federico_gozzi_swiech_2026}
%S. Federico, F. Gozzi and A. \'{S}wi\k{e}ch,
%\textit{On mean field games in infinite dimension}, J. Math. Pures Appl. (9) 205 (2026), Paper No. 103780, 33 pp.

\bibitem{federico_tankov_2015}
S. Federico and P. Tankov,
\textit{Finite-dimensional representations for controlled diffusions with delay}, Appl. Math. Optim. 71 (2015), no. 1, 165--194.

%\bibitem{fouque_zhang_2018}
%J.-P. Fouque and Z. Zhang,
%\textit{Mean field game with delay: A toy model}, Risks 6 (2018), no. 3.

\bibitem{frankowska_zhang_2020}
H. Frankowska and X. Zhang,
\textit{Necessary conditions for stochastic optimal control problems in infinite dimensions}, Stochastic Process. Appl. 130 (2020), no. 7, 4081--4103.

\bibitem{fuhrman_hu_tessitore_2012}
M. Fuhrman, Y. Hu and G. Tessitore,
\textit{Stochastic maximum principle for optimal control of {SPDE}s}, C. R. Math. Acad. Sci. Paris 350 (2012), no. 13-14, 683--688.

\bibitem{fuhrman_hu_tessitore_2013}
M. Fuhrman, Y. Hu and G. Tessitore,
\textit{Stochastic maximum principle for optimal control of {SPDE}s}, Appl. Math. Optim. 68 (2013), no. 2, 181--217.

\bibitem{gawarecki_mandrekar_2011}
L Gawarecki and V. Mandrekar,
\textit{Stochastic differential equations in infinite dimensions with applications to stochastic partial differential equations}, Probability and its Applications (New York), Springer, Heidelberg, 2011.

\bibitem{gozzi_masiero_rosestolato_2024}
F. Gozzi, F. Masiero and M. Rosestolato,
\textit{An optimal advertising model with carryover effect and mean field terms}, Math. Financ. Econ. 18 (2024), no. 2-3, 413--427.

\bibitem{guatteri_masiero_2021}
G. Guatteri and F. Masiero
\textit{Stochastic maximum principle for problems with delay with dependence on the past through general measures}, Math. Control Relat. Fields 11 (2021), no. 4, 829--855.
 
\bibitem{guatteri_masiero_2024}
G. Guatteri and F. Masiero.
\textit{Stochastic maximum principle for optimal advertising models with delay and non-convex control spaces}, Adv. Pure Math. 14 (2024), 442--450.

\bibitem{guatteri_masiero_2023}
G. Guatteri and F. Masiero,
\textit{Stochastic maximum principle for equations with delay: going to infinite dimensions to solve the non-convex case}, SIAM J. Control Optim. 63 (2025), no. 6, 3880--3914.

\bibitem{guatteri_tessitore_2005}
G. Guatteri and G. Tessitore,
\textit{On the backward stochastic {R}iccati equation in infinite dimensions}, SIAM J. Control Optim. 44 (2005), no. 1, 159--194.

%\bibitem{guatteri_tessitore_2014}
%G. Guatteri and G. Tessitore,
%\textit{Well posedness of operator valued backward stochastic {R}iccati equations in infinite dimensional spaces}, SIAM J. Control Optim. 52 (2014), no. 6, 3776--3806.

\bibitem{guo_xiong_zheng_2024}
H. Guo, J. Xiong and J. Zheng,
\textit{Stochastic maximum principle for generalized mean-field delay control problem}, J. Optim. Theory Appl. 201 (2024), no. 1, 352--377.

%\bibitem{huang_caines_malhame_2006}
%M. Huang, P. Caines, and R. Malhamé. 
%\textit{Large population stochastic dynamic games: Closed-loop McKean-Vlasov systems and the Nash certainty equivalence principle}. Commun. Inf. Sys. 6 (2006), no. 3, 221--252.

\bibitem{huang_shi_2012}
J. Huang and J. Shi,
\textit{Maximum principle for optimal control of fully coupled forward-backward stochastic differential delayed equations}, ESAIM Control Optim. Calc. Var. 18 (2012), no. 4, 1073--1096.

\bibitem{hu_peng_1991}
Y. Hu and S. Peng,
\textit{Adapted solution of a backward semilinear stochastic evolution equation}, Stochastic Anal. Appl. 9 (1991), no. 4, 445--459.

%\bibitem{kac_1956}
%M. Kac, 
%\textit{Foundations of kinetic theory. In Proceedings of the Third Berkeley Symposium on Mathematical Statistics and Probability}, 1954–1955, vol. III, pages 171--197. University of California Press, Berkeley and Los Angeles, 1956.

\bibitem{kadison_ringrose_1983}
R. V. Kadison and J. R. Ringrose,
\textit{Fundamentals of the theory of operator algebras. {V}ol.~{I}}, Elementary theory, Pure Appl. Math. 100, Academic Press, Inc. [Harcourt Brace Jovanovich, Publishers], New York 1983.

%\bibitem{lasry_lions_2007}
%J.M. Lasry and P.L. Lions, 
%\textit{Mean-field games}, Jpn J. Math. 2 (2007), no. 1, 229--260.

%\bibitem{liu_firoozi_2025}
%H. Liu and D. Firoozi,
%\textit{Hilbert space-valued {L}{Q} mean field games: An infinite-dimensional analysis}, SIAM J. Control Optim. 63 (2025), no. 5, 3297--3327.

\bibitem{lu_zhang_2014}
Q. L\"u and X. Zhang,
\textit{General {P}ontryagin-type stochastic maximum principle and backward stochastic evolution equations in infinite dimensions}, SpringerBriefs in Mathematics, Springer, Cham, 2014.

\bibitem{lu_zhang_2015}
Q. L\"u and X. Zhang,
\textit{Transposition method for backward stochastic evolution equations revisited, and its application}, Math. Control Relat. Fields 5 (2015), no. 3, 529--555.

\bibitem{lu_zhang_2018}
Q. L\"u and X. Zhang,
\textit{Operator-valued backward stochastic {L}yapunov equations in infinite dimensions, and its application}, Math. Control Relat. Fields 8 (2018), no. 1, 337--381.

\bibitem{meng_shi_2021}
W. Meng and J. Shi,
\textit{A global maximum principle for stochastic optimal control problems with delay and applications}, Systems Control Lett. 150 (2021), Paper No. 104909, 14 pp.

\bibitem{meng_shi_wang_zhang_2025}
W. Meng, J. Shi, T. Wang and J. F. Zhang,
\textit{A general maximum principle for optimal control of stochastic differential delay systems}, SIAM J. Control Optim. 63 (2025), no. 1, 175--205.

\bibitem{oksendal_sulem_zhang_2011}
B. {\O}ksendal, A. Sulem and T. Zhang,
\textit{Optimal control of stochastic delay equations and time-advanced backward stochastic differential equations}, Adv. in Appl. Probab. 43 (2011), no. 2, 572--596.

\bibitem{peng_1990}
S. Peng,
\textit{A general stochastic maximum principle for optimal control problems}, SIAM J. Control Optim. 28 (1990), no. 4, 966--979.

\bibitem{peng_yang_2009}
S. Peng and Z. Yang,
\textit{Anticipated backward stochastic differential equations}, Ann. Probab. 37 (2009), no. 3, 877--902.

\bibitem{pontryagin_1962}
L. S. Pontryagin, V. G. Boltyanskii, R. V. Gamkrelidze and E. F. Mishchenko,
\textit{The mathematical theory of optimal processes}, Interscience Publishers John Wiley \& Sons, Inc., New York, 1962.

\bibitem{shi_wang_yong_2013}
Y. Shi, T. Wang and J. Yong,
\textit{Mean-field backward stochastic {V}olterra integral equations}, Discrete Contin. Dyn. Syst. Ser. B 18 (2013), no. 7, 1929--1967.

\bibitem{spille_stannat_2025}
J. B. Spille and W. Stannat,
\textit{Pontryagin maximum principle for {M}c{K}ean-{V}lasov reaction-diffusion equations}, arXiv preprint, \url{https://arxiv.org/abs/2507.16288} (2025).

\bibitem{stannat_wessels_2021}
W. Stannat and L. Wessels,
\textit{Peng's maximum principle for stochastic partial differential equations}, SIAM J. Control Optim. 59 (2021), no. 5, 3552--3573.

\bibitem{tang_li_1994}
S. J. Tang and X. J. Li,
\textit{Maximum principle for optimal control of distributed parameter stochastic systems with random jumps}, in Differential equations, dynamical systems, and control science, Lecture Notes in Pure and Appl. Math. 152, Dekker, New York, 1994.

\bibitem{tang_meng_wang_2019}
M. Tang, Q. Meng and M. Wang,
\textit{Forward and backward mean-field stochastic partial differential equation and optimal control}, Chinese Ann. Math. Ser. B 40 (2019), no. 4, 515--540.

\bibitem{villani_2009}
C. Villani,
\textit{Optimal transport: Old and new}, Grundlehren der mathematischen Wissenschaften [Fundamental Principles of Mathematical Sciences] 338, Springer-Verlag, Berlin, 2009.

\bibitem{wessels_2022}
L. Wessels,
\textit{Optimal control of stochastic reaction-diffusion equations}, Doctoral Thesis, Technische Universit\"at Berlin, Berlin, Germany, 2022, \url{https://doi.org/10.14279/depositonce-16218}.

\bibitem{yu_2012}
Z. Yu,
\textit{The stochastic maximum principle for optimal control problems of delay systems involving continuous and impulse controls}, Automatica J. IFAC 48 (2012), no. 10, 2420--2432.

\bibitem{zabczyk_1999}
J. Zabczyk,
\textit{Parabolic equations on {H}ilbert spaces}, in Stochastic PDE's and Kolmogorov Equations in Infinite Dimensions, G. da Prato, ed., Lecture Notes in Math. 1715, Springer-Verlag, Berlin, 1999, pp. 117--213.

\bibitem{zhang_2021}
F. Zhang,
\textit{Stochastic maximum principle for optimal control problems involving delayed systems}, Sci. China Inf. Sci. 64 (2021), Paper No. 119206, 3pp.

\end{thebibliography}
\end{document}